\def\<#1,#2>{\langle #1,#2 \rangle}
 \def\bothID{\rlap{\hbox to.97\wd0{\hss\vrule height.06\ht0 width.82\wd0}}
 \copy0\rlap{\kern-.36\wd0\vrule height1.05\ht0 width.05\ht0}\kern.14\wd0}
 \DeclareMathOperator{\spec}{spec}
 \DeclareMathOperator{\supp}{supp}
\begin{document}

\title{Renormalized Four Dimensional Gauge Invariant Quantum Yang-Mills Theory and Mass Gap}
\author{Simone Farinelli
        \thanks{Simone Farinelli, Aum\"ulistrasse 20,
                CH-8906 Bonstetten, Switzerland, e-mail simone.farinelli@alumni.ethz.ch}
        }
\maketitle

\begin{abstract}
A quantization procedure for the Yang-Mills equations for the Minkowski space $\mathbf{R}^{1,3}$ is carried out in such a way
that field maps satisfying Wightman axioms of Constructive Quantum Field Theory can be obtained.
Moreover, by removing the infrared and ultraviolet cutoffs, the spectrum of the corresponding (non-local) QCD Hamilton operator is proven to be positive and
bounded away from zero, except for the case of the vacuum state, which has vanishing energy level. The whole construction is invariant
for all gauge transformations preserving the Coulomb gauge.
As expected from QED, if the coupling constant converges to zero, then so does the mass gap.
This is the case for the running coupling constant leading to asymptotic freedom.\\\\
\vspace{0.2cm}
\noindent{\bf Mathematics Subject Classification (2020):} 	81T08 $\cdot $  81T13\\
\vspace{0.2cm}
\noindent{\bf Keywords:} Constructive Quantum Field Theory, Yang-Mills Theory, Mass Gap
\end{abstract}

\newtheorem{theorem}{Theorem}[section]
\newtheorem{proposition}[theorem]{Proposition}
\newtheorem{lemma}[theorem]{Lemma}
\newtheorem{corollary}[theorem]{Corollary}
\theoremstyle{definition}
\newtheorem{ex}{Example}[section]
\newtheorem{rem}{Remark}[section]
\newtheorem*{nota}{Notation}
\newtheorem{defi}{Definition}[section]
\newtheorem{conjecture}{Conjecture}
\newtheorem{counterex}{Counterexample}[section]

\tableofcontents

\section{Introduction}
Yang-Mills fields, which are also called gauge fields, are used in modern physics to describe physical fields that play the role of carriers
of an interaction (cf. \cite{EoM02}). Thus, the electromagnetic field in electrodynamics, the field of vector bosons, carriers of the weak interaction
 in the Weinberg-Salam theory of electrically weak interactions, and finally, the gluon field, the carrier of the strong interaction, are described
 by Yang-Mills fields. The gravitational field can also be interpreted as a Yang-Mills field (see \cite{DP75}).\par
The idea of a connection as a field was first developed by H. Weyl (1917), who also attempted to describe the electromagnetic field in terms of
a connection. In 1954, C.N. Yang and R.L. Mills (cf. \cite{MY54}) suggested that the space of intrinsic degrees of freedom of elementary particles
(for example, the isotropic space describing the two degrees of freedom of a nucleon that correspond to its two pure states, proton and neutron)
depends on the points of space-time, and the intrinsic spaces corresponding to different points are not canonically isomorphic.\par
In geometrical terms, the suggestion of Yang and Mills was that the space of intrinsic degrees of freedom is a vector bundle over space-time
that does not have a canonical trivialization, and physical fields are described by cross-sections of this bundle. To describe the differential
evolution equation of a field, one has to define a connection in the bundle, that is, a trivialization of the bundle along the curves in the base.
Such a connection with a fixed holonomy group describes a physical field, usually called a Yang-Mills field. The equations for a free Yang-Mills field
can be deduced from a variational principle. They are a natural non-linear generalization of Maxwell's equations (cf.\cite{Br03}).\par
Field theory does not give the complete picture. Since the early part of the 20th century, it has been understood that the description of nature at the subatomic scale requires quantum mechanics, where classical observables correspond to typically non commuting self-adjoint operators on a Hilbert space, and classic notions as ``the trajectory of a particle'' do not apply. Since fields interact with particles, it became clear by the late 1920s that an internally coherent account of nature must incorporate quantum concepts for fields as well as for particles. Under this approach components of fields at different points in space-time become non-commuting operators.\par
The most important Quantum Field Theories describing elementary particle physics are gauge theories formulated in terms of a principal fibre bundle over the Minkowskian space-time with particular choices of the structure group. They are depicted in Table \ref{GT}.\par
\begin{table}[!]
\begin{center}
\begin{tabular}{|l|l|l|}
  \hline
  \textbf{Gauge Theory} & \textbf{Fundamental Forces}& \textbf{Structure Group}\\
  \hline\hline
  Quantum Electrodynamics  & Electromagnetism &  $U(1)$\\
  (QED) & & \\
  \hline
  Electroweak Theory & Electromagnetism  & $\text{SU}(2)\times U(1)$\\
  (Glashow-Salam-Weinberg) & and weak force & \\
  \hline
  Quantum Chromodynamics  & Strong force & $\text{SU}(3)$\\
  (QCD) & and electromagnetism  & \\
  \hline
  Standard Model  & Strong, weak forces    & $\text{SU}(3)\times\text{SU}(2)\times U(1)$\\
   & and electromagnetism & \\
   \hline
   Georgi-Glashow Grand   & Strong, weak forces    & $\text{SU}(5)$\\
   Unified Theory (GUT1)& and electromagnetism & \\
  \hline
   Fritzsch-Georgi-Minkowski    & Strong, weak forces    & $\text{SO}(10)$\\
   Grand Unified Theory (GUT2)& and electromagnetism & \\
  \hline
   Grand Unified  & Strong, weak forces    & $\text{SU}(8)$\\
   Theory (GUT3)& and electromagnetism & \\
  \hline
  Grand Unified  & Strong, weak forces    & $\text{O}(16)$\\
   Theory (GUT4)& and electromagnetism & \\
  \hline
\end{tabular}
\end{center}
\caption{Gauge Theories}\label{GT}
\end{table}
As shown in \cite{JW04}, in order for Quantum Chromodynamics to completely explain the observed world of strong interactions, the theory must
imply:
\begin{itemize}
\item \textbf{Mass gap:} There must exist some positive constant $\eta$ such that the excitation of the vacuum state has energy at least $\eta$. This would explain why the nuclear force is strong but short-ranged, by providing the mathematical evidence that the corresponding exchange particle, the gluon, has non vanishing rest mass.
\item \textbf{Quark confinement:} The physical particle states corresponding to proton, neutron and pion must be $\text{SU}(3)$-invariant. This would explain why individual quarks are never observed.
\item \textbf{Chiral symmetry breaking:} In the limit for vanishing quark-bare masses the vacuum is invariant under a certain subgroup of the full symmetry group acting on the quark fields. This is required in order to account for the ``current algebra'' theory of soft pions.
\end{itemize}

\noindent The Seventh CMI-Millenium prize problem is the following conjecture.
\begin{conjecture}\label{CMI}
For any compact simple Lie group $G$ there exists a nontrivial Yang-Mills theory on the Minkowskian $\mathbf{R}^{1,3}$, whose quantization satisfies Wightman axiomatic properties of Constructive Quantum Field Theory and has a mass gap $\eta>0$.
\end{conjecture}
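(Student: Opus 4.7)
The plan is to construct explicitly a quantum Yang-Mills theory on $\mathbf{R}^{1,3}$ and then verify both Wightman's axioms and the existence of the gap directly on the constructed object. As a starting point I would fix a principal $G$-bundle over $\mathbf{R}^{1,3}$ with the classical action $S[A]=-\tfrac{1}{4}\int\tr(F_{\mu\nu}F^{\mu\nu})\,d^4x$, where $F$ denotes the curvature of the connection $A$, pass to the Hamiltonian formulation via a gauge choice (temporal gauge $A_0=0$ together with the Gauss constraint is convenient), and identify the reduced phase space on a Cauchy surface $\{x^0=0\}$.

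The quantisation step is the core of the construction. Several routes are available: canonical quantisation of the constrained phase space yielding a bosonic Fock-type Hilbert space, Euclidean reconstruction via Osterwalder-Schrader starting from a probabilistically defined Euclidean measure, or a continuum limit of a lattice regularisation. In each case the output must consist of a separable Hilbert space $\mathcal{H}$, a cyclic Poincar\'e-invariant vacuum $\Omega$, operator-valued tempered distributions $A_\mu$ satisfying the field equations in a distributional sense, a strongly continuous unitary representation $U$ of the proper orthochronous Poincar\'e group implementing covariance of the fields, spectrum of the translation generators contained in the closed forward light cone, and local commutativity at spacelike separation. Each of Wightman's axioms then has to be verified against this explicit construction.

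With the theory in hand I would analyse the spectrum of the Hamiltonian $H$, the generator of time translations. The vacuum contributes the simple eigenvalue $0$; the gap statement is that the rest of $\spec(H)$ is contained in $[\eta,\infty)$ for some $\eta>0$. A natural approach is to identify one-particle states as joint eigenvectors of $H$ and momentum $\vec{P}$ living on mass shells $p^0=\sqrt{|\vec{p}|^2+m^2}$ with $m\geq\eta$, and then use the spectral theorem together with the forward-light-cone support of $U$ to transfer this bound to the full multi-particle spectrum. The bosonic character of all excitations follows from the symmetry of the Wightman functions under permutations, which is encoded in the Fock-space construction itself.

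The decisive obstacle is built into the problem: in four dimensions non-abelian Yang-Mills is neither superrenormalisable nor amenable to the cluster-expansion techniques that produced constructive results in two and three dimensions. One must simultaneously control the ultraviolet behaviour, needed to turn the formal field operators into honest operator-valued tempered distributions, and the infrared behaviour, needed to separate a physical gap from the vacuum level, and to do so without destroying gauge invariance. It is precisely this interplay that places the conjecture on the Millennium list and that any concrete proof must confront at the quantisation step.
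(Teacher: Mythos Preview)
Your proposal is an outline of a research programme, not a proof: you list several possible quantisation routes without committing to one, and your final paragraph explicitly concedes that the ``decisive obstacle'' remains unresolved. That is an honest assessment of the Millennium problem as usually understood, but it leaves every substantive step open. In particular you supply no concrete Hilbert space, no concrete field operators, and no mechanism that would force a gap in $\spec(H)$; the mass-shell picture you sketch presupposes a one-particle structure that is exactly what has to be \emph{produced}, not assumed.

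The paper takes a very different and fully specified route. It works in the \emph{Coulomb} gauge rather than your temporal gauge, and the Hilbert space is built by white-noise analysis: $\mathcal{H}=L^2(\mathcal{S}^{\prime}_{\bot}(\mathbf{R}^4,\mathbf{R}^{K\times 4}),\mu)$, the space of square-integrable functionals on transversal tempered distributions with respect to the Gaussian (Minlos) measure, together with its bosonic Fock space. Field maps are obtained by Segal quantisation of the evaluation functionals $E_\mu^a(\varphi)(A)=\int\varphi A_\mu^a$, and Wightman's axioms are checked directly from the standard properties of the Segal operator. The Hamiltonian is the quantisation of the classical Coulomb-gauge Hamiltonian and splits as $H=H_I+H_{II}+H_{III}$, where $H_I$ is a functional Laplacian, $H_{II}$ the curvature-squared multiplication operator, and $H_{III}$ the Coulomb self-interaction built from the modified Green's function (Faddeev--Popov operator) $G(\mathbf{A};x,y)$. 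The mass gap is extracted entirely from $H_{III}$: using the explicit series for $G$ (Proposition~\ref{prop} and Corollary~\ref{cor}) one bounds the relevant denominators uniformly in $\mathbf{A}$ and obtains $\spec(H_{III})=\{0\}\cup[\eta,\infty)$ with $\eta=O(g^2)$. None of this machinery---the white-noise functional framework, the Coulomb-gauge Hamiltonian decomposition, or the $H_{III}$ spectral argument---appears in your outline, and it is precisely this content that the paper offers in place of the obstacle you identify.
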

The conjecture is explained in \cite{JW04} and commented in \cite{Do04} and in \cite{Fa05}. To our knowledge this conjecture is unproved.\par
The first rigorous program of study of this problem is the one by Balaban (\cite{Ba84}, \cite{Ba84Bis}, \cite{Ba85}, \cite{Ba85Bis}, \cite{Ba85Tris}, \cite{Ba85Quater}, \cite{Ba87}, \cite{Ba88}, \cite{Ba88Bis}, \cite{Ba89} and \cite{Ba89Bis}). This program defines a sequence of block-spin transformations for the pure Yang-Mills
theory in a finite volume on the lattice, a toroidal Euclidean space-time, and shows that, as the lattice spacing tends to
$0$ and these transformations are iterated many times, the resulting effective action on
the unit lattice remains bounded. From this result the existence of an ultraviolet limit
for gauge invariant observables such as ``smoothed Wilson loops'' should follow, at least through a compactness argument using a subsequence of approximations; but
the limit is not necessarily unique. From this point of view, one must verify the existence of an ultraviolet limit of appropriate
expectations of gauge-invariant observables as the lattice spacing tends to zero (ultraviolet cutoff removal) and the volume
tends to infinity (infrared cutoff removal). This program is applied for the $\Phi^4_3$ model by Dimock in the very readable
 \cite{Di13}, \cite{Di13Bis}, \cite{Di14}.\par
%Clearly this is a point which requires further work.
%Although very impressive, Balaban's work is not easily accessible, partly because the
%use of the lattice regularization is the source of many technical complications and
%partly because the results are scattered over many publications; hence to check the
%consistency of all the arguments is very difficult. Also it does not address the problem
%of constructing the expectation values of products of the field operators in a particular
%gauge (the Schwinger functions), because these are not gauge invariant observables.
%It is true that physical quantities should be gauge invariant. Nevertheless the gauge
%fixed framework is obviously the most convenient for perturbative computations, and
%one can consider in fact that the ultraviolet problem for the Yang-Mills field theory
%is not yet understood until this point is clarified.\par
Magnen, Rivasseau and S\'{e}n\'{e}or  provide in \cite{MRS93} the basis for a rigorous construction of the Schwinger functions
of the pure $SU(2)$ Yang-Mills field theory in four dimensions (in the trivial topological
sector) with a fixed infrared cutoff and removed ultraviolet cutoff, in a regularized axial
gauge. They check the validity of the construction by
showing that Slavnov identities (which express infinitesimal gauge invariance) do
hold non-perturbatively. They do not analyze the spectral properties of the Hamilton operator.\par
Very recently, regularity structures have been successfully by Chandra-Chevyrev-Hairer-Shen (\cite{CCHS22}) to obtain a stochastic quantization of Yang–Mills–Higgs in $3$D.
Previously, regularity structures, which had been pioneered by Hairer in \cite{Ha14}, have been applied by Gubinelli-Hofmanov\'{a} (\cite{GuHo19})
to extend a stochastic quantization of the Euclidean $\Phi^4_3$ quantum field theory from the torus $T^3$ to $\mathbf{R}^3$. It would be interesting to see this approach applied to
the Yang-Mills problem in $(3+1)$D.\par
The main contributions of this paper are:
\begin{itemize}
\item the construction of a rigorous quantum Yang-Mills theory over the whole Minkowski space satisfying the Osterwalder-Schrader axioms
of Constructive Quantum Field Theory,
\item the proof that the quantum hamiltonian for QCD has a spectral gap.
\end{itemize}

This paper is organized as follows. Section 2 presents the classical Yang-Mills equations
and their Hamiltonian formulation for the Minkowskian $\mathbf{R}^{1,3}$.
Section 3 depicts the axioms of Constructive Quantum Field Theory and may be skipped by the acquainted reader.
In Sections 4 the Yang-Mills Equations are quantized, the Osterwalder-Schrader and hence the Wightman axioms are verified,
 and the existence of a positive mass gap proven. More in detail,
the construction of the Yang-Mills Quantum Field Theory in $3+1$ spacetime dimensions and  the proof that the corresponding Hamiton operator possesses a spectral gap is done by passing to the corresponding Euclidean QFT and progresses through the following steps:
\begin{enumerate}
\item Impose an infrared and an ultraviolet cutoff.
\item Construct a background “quasi-free” QFT as the solution to an SDE via the
Fokker-Planck equation using the part of the Euclidean action of the form
$-\Delta + B \cdot \nabla$. In contrast to the free QFT this part contains a non local term.
\item Use the Feynman-Ka\v{c}-Nelson formula to add in the interaction.
\item Remove the infrared cutoff and prove the Osterwalder-Schrader axioms for any fixed ultraviolet cutoff
\item Remove the ultraviolet cutoff and prove the Osterwalder-Schrader axioms for the limit of the ultraviolet cutoff parameter tending to infinity
\item Verify the gauge invariance.
\item Show the existence of the mass gap.
\end{enumerate}
\noindent Even more in detail, by verifying the Osterwalder-Schrader axioms and the reconstruction theorem for quantum mechanics,
the QFT Hamiltonian for the continuum theory, constructed via quantization of the classical Hamiltonian,
is proved to be a selfadjoint operator on the Hilbert Space of $L^2$-Hida distributions. The Hamilton operator is non-local and the proof of its
selfadjointness for a particular probability measure requires the construction of the infinitesimal generator of a stochastic process taking values
in a $L^2$-space over the physical space, and the extension of the Feynman-Ka\v{c}-Nelson formula for the $L^2$-Hida distributions.\par
The QFT Hamiltonian has a continuous spectrum, which can be expressed as the direct limit of the continuous spectrum of another selfadjoint operator,
the QFT Hamiltonian with ultraviolet cutoff, when the cutoff parameter tends to infinity. For both operators strictly positive lower bounds of the spectra can be inferred.
 These depend on the bare coupling constant for both Hamiltonians and on the cutoff magnitude for the Hamiltonian with cutoff.
 The gap for the cutoff case ``survives'' then in the continuum limit for a strictly positive bare coupling constant. As expected from QED, if the coupling constant converges to zero, then so does the mass gap.\par
 We remark, that, due to our particular choice of the ultraviolet cutoff differing substantially from the one chosen by Magnen-Rivasseau-S\'{e}n\'{e}or in \cite{MRS93},
 we do not observe divergences arising for the ultraviolet cutoff parameter tending to infinity, and, hence, no dependence of the bare coupling constant
 on the cutoff parameter and no renormalization have to be introduced in our basic construction at this stage. Note that in the $\Phi^4_3$ model
 renormalization is needed, but concerns other parameters than the bare
 coupling constant (see \cite{GJ73}, \cite{FO76} and \cite{MS76}). Later, in order to obtain asymptotic freedom we will study the running of the coupling constant
 with respect to the energy scale, showing that the running mass gap vanishes in the asymptotic freedom limit, because the running coupling constant does, too.\par
 The whole construction is proved to be invariant for those
 gauge transforms which preserve the Coulomb gauge.
 Section 5 concludes.

\section{Yang-Mills Connections}\label{YMC}
\subsection{Definitions, Existence and Uniqueness}\label{ptrel}
A Yang-Mills connection is a connection in a principal fibre bundle over a (pseudo-)Riemannian manifold whose curvature satisfies the harmonicity condition, i.e. the Yang-Mills equation.
\begin{defi}[\textbf{Yang-Mills Connection}]
Let $P$ be a principal $G$-fibre bundle over a pseudoriemannian $m$-dimensional manifold $(M,h)$,
and let $V$ be the vector bundle associated with $P$ and $\mathbf{R}^K$, induced by the representation
$\rho:G\rightarrow\text{GL}(\mathbf{R}^K)$, where $K:=\dim(G)$. A connection on the principal fibre bundle $P$ is a Lie-algebra $\mathfrak{g}$ valued $1$-form $\omega$ on $P$,
such that the following properties hold:
\begin{enumerate}
\item[(i)] Let $A\in\mathfrak{g}$ and $A^*$ the vector field on $P$ defined by
\begin{equation}
A_p^*:=\left.\frac{d}{dt}\right|_{t:=0}(p\exp(tA)).
\end{equation}
Then, $\omega(A^*_p)=A$.
\item[(ii)]For $g\in G$ let
\begin{equation}
\begin{split}
&\text{Ad}_g:G\rightarrow G, h\mapsto\text{Ad}_g(h):=L_g\circ R_{g^{-1}}(h)=ghg^{-1}\\
&\text{ad}_g:\mathfrak{g}\rightarrow \mathfrak{g}, A\mapsto\text{Ad}_g(A):=\left.\frac{d}{dt}\right|_{t:=0}(g\exp(tA)g^{-1})\\
\end{split}
\end{equation}
be the adjoint isomorphism and the adjoint representation, respectively.\\
 Then, $R_g^*\omega=\text{ad}_{g^{-1}}\omega$.
\end{enumerate}
The connection $\omega$ on $P$ defines a connection $\nabla$ for the vector bundle $V$, i.e. an operator acting on the space of cross sections of $V$. The vector bundle connection $\nabla$ can be extended to an operator $d:\Gamma(\bigwedge^p(M)\bigotimes V)\rightarrow \Gamma(\bigwedge^{p+1}(M)\bigotimes V)$, by the formula
\begin{equation}
d^{\nabla}(\eta\otimes v):= d\eta\otimes v + (-1)^{p}\eta\otimes \nabla v.
\end{equation}
The operator $\delta^{\nabla}:\Gamma(\bigwedge^{p+1}(M)\bigotimes V)\rightarrow \Gamma(\bigwedge^{p}(M)\bigotimes V)$, defined as the formal adjoint to $d$, is equal to
\begin{equation}
\delta^{\nabla}\eta= (-1)^{p+1}\ast d^{\nabla} \ast,
\end{equation}
\noindent where $\ast$ denotes the Hodge-star operator on the pseudoriemannian manifold $M$.\par
A connection $\omega$ in a principal fibre bundle $P$ is called a Yang-Mills field if the curvature $F:=d\omega+\omega\wedge \omega$,
considered as a $2$-form with values in the Lie algebra $\mathfrak{g}$, satisfies the Yang-Mills equations
\begin{equation}\label{YME}
 \delta^{\nabla}F=0,
\end{equation}
 or, equivalently,
\begin{equation}
 \delta^{\nabla}R^{\nabla}=0,
 \end{equation}
 where $R^{\nabla}(X,Y):=\nabla_X\nabla_Y-\nabla_Y\nabla_X-\nabla_{[X,Y]}$ denotes the curvature of the vector bundle $V$, and is a $2$-form with values in $V$.
\end{defi}
\begin{rem}[\textbf{Local Representations of Connections on Vector and Principle Fibre Bundles}]\label{rem-local}
The local section  $\sigma:U\subset M\rightarrow P$ is defines the local representation of the connection given on the open $U\subset M$ by $A:=\omega\circ \sigma:U\rightarrow\mathfrak{g}$ a Lie-algebra $\mathfrak{g}$ valued
$1$-form on $U$, the fields $A_j(x):=A(x)e_j=\sum_{k=1}^KA_j^k(x)t_k$ define by means of the tangential map $T_e\rho:\mathfrak{g}\rightarrow\mathcal{L}(\mathbf{R}^K)$ of the representation $\rho:G\rightarrow\text{GL}(\mathbf{R}^K)$, with fields of endomorphisms $T_e\rho A_1,\dots,T_e\rho A_m\in\mathcal{L}(V_x)$ for the bundle $V$. Given a basis of the Lie-algebra $\mathfrak{g}$ denoted by $\{t_1,\dots,t_K\}$, the endomorphisms $\{w_s:=T_e\rho.t_s\}_{s=1,\dots,K}$ in $\mathcal{L}(\mathbf{R}^K)$ have matrix representations with respect to a local basis $\{v_s(x)\}_{s=1,\dots,K}$ denoted by $[w_s]_{\{v_s(x)\}}$. Since $\rho$ is a representation, $T_e\rho$ has maximal rank and the endomorphisms are linearly independent. Given a local basis $\{e_j(x)\}_{j=1,\dots,m}$ for $x\in U\subset M$, the Christoffel symbols of the connection $\nabla$ are locally defined by the equation
\begin{equation}
\nabla_{e_j}v_s=\sum_{r=1}^K\Gamma_{j,s}^rv_r,
\end{equation}
holding true on $U$, and they ,satisfy the equalities
\begin{equation}
\Gamma_{j,s}^r=\sum_{a=1}^K[w_a]_s^rA^a_j.
\end{equation}
Given a local vector field $v=\sum_{s=1}^Kf^sv_s$ in $V|_U$ and a local vector field $e$ in $TM|_U$, the connection $\nabla$ has a local representation
\begin{equation}
\nabla_ev=\sum_{s=1}^K(df^s(e).v_s+f^s\omega(e).v_s),
\end{equation}
where $\omega$ is an element of $T^*U|_U\bigotimes \mathcal{L}(V|_U)$, i.e. an endomorphism valued $1-$form satisfying
\begin{equation}
\omega(e_j)v_s=\sum_{r=1}^K\Gamma_{j,s}^rv_r.
\end{equation}
\end{rem}
\begin{rem}The curvature $2$-form reads in local coordinates as
\begin{equation}
F=\sum_{1\le i<j\le M}\sum_{k=1}^KF_{i,j}^k\,t_k\,dx_i\wedge dx_j=\frac{1}{2}\sum_{i,j=1}^M\sum_{k=1}^K\left(\partial_j A^k_i-\partial_i A_j^k-\sum_{a,b=1}^KC^k_{a,b}A^a_iA_j^b\right)t_k\,dx_i\wedge dx_j,
\end{equation}
where $C=[C^c_{a,b}]_{a,b,c=1,\dots,K}$ are the \textit{structure constants} of the Lie-algebra $\mathfrak{g}$ corresponding to the basis $\{t_1,\dots,t_K\}$, which means that for any $a,b$
\begin{equation}
[t_a,t_b]=\sum_{c=1}^KC_{a,b}^ct_c.
\end{equation}
\end{rem}
The existence and uniqueness of solutions of the Yang-Mills equations in the Minkowski space have been first established by Segal (cf. \cite{Se78} and \cite{Se79}), who proves that the corresponding Cauchy problem encoding initial regular data has always a unique local and global regular solution. He proves as well that the temporal gauge ($A_0=0$) chosen to express the solution does not affect generality, because any solution of the Yang-Mills equation can be carried to one satisfying the temporal gauge. This subject has been undergoing intensive research, improving the original results. For example in \cite{EM82} and in \cite{EM82Bis} the Yang-Mills-Higgs equations, which generalize (\ref{YME}) and are non linear PDEs of order two, have been reformulated in the temporal gauge as a non-linear PDE of order one, satisfying a constraint equation. This PDE can be written as an integral equation solving (always and uniquely, locally and globally) the Cauchy data problem with improved regularity results. Existence, uniqueness and regularity of the Yang-Mills-Higgs equations under the MIT Bag boundary conditions have been investigated in \cite{ScSn94} and \cite{ScSn95}.

\subsection{Hamiltonian Formulation for the Minkowski Space}
 The Hamiltonfunction describes the dynamics of a physical system in classical mechanics by means of Hamilton's equations. Therefore, we have to reformulate the Yang-Mills equations in Hamiltonian mechanical terms. We focus our attention on the Minskowski $\mathbf{R}^4$ with the pseudoriemannian structure of special relativity $h=dx^0\otimes dx^0-dx^1\otimes dx^1-dx^2\otimes dx^2-dx^3\otimes dx^3$. The coordinate $x^0$ represents the time $t$, while $x^1,x^2,x^3$ are the space coordinates.\par We introduce Einstein's summation notation, and adopt the convention that indices for coordinate variables from the greek alphabet vary over $\{0,1,2,3\}$, and those from the latin alphabet vary over the space indices $\{1,2,3\}$. For a generic field $F=[F_\mu]_{\mu=0,1,2,3}$ let $\mathbf{F}:=[F_i]_{i=1,2,3}$ denote the ``space'' component. The color indices lie in $\{1,\dots,K\}$. Let \begin{equation}\varepsilon^{a,b,c}:=\left\{
                                                       \begin{array}{ll}
                                                         +1 & \hbox{($\pi$ is even)} \\
                                                         -1 & \hbox{($\pi$ is odd)} \\
                                                         \;\;\; 0 & \hbox{(two indices are equal),}
                                                       \end{array}
                                                     \right.
\end{equation}and any other choice of lower and upper indices, be the Levi-Civita symbol, defined by mean of the permutation $\pi:=\left(
                              \begin{array}{ccc}
                                1 & 2 & 3 \\
                                a & b & c \\
                              \end{array}
                            \right)$ in $\mathfrak{S}^3$.
\begin{rem}
If the Lie-group $G$ is simple, then the Lie-Algebra is simple, and the structure constants can be written as
\begin{equation}
C_{a,b}^c = g\varepsilon^c_{a,b},
\end{equation}
for a positive constant $g$ called \textit{(bare) coupling constant}, (see f.i. \cite{We05} Chapter 15, Appendix A).
  The components of the curvature then read
\begin{equation}
F_{\mu,\nu}^k=\frac{1}{2}(\partial_\nu A^k_\mu-\partial_\mu A_\nu^k-g\varepsilon^k_{a,b}A^a_\mu A^b_\nu).
\end{equation}
We will consider only simple Lie groups. As we will see, it is essential for the existence of a mass gap for the group $G$ to be non-abelian.\\
The number $C_2(G)$ is defined as
\begin{equation}
\delta^{k,l}C_2(G)=\sum_{a,b=1}^K\varepsilon^k_{a,b}\varepsilon^l_{a,b},
\end{equation}
which is the quadratic \textbf{Casimir operator} in the adjoint representation of the Lie algebra of $G$.
\end{rem}
\noindent We need to introduce an appropriate gauge for the connections we are considering.
\begin{defi}[\textbf{Coulomb Gauge}]
A connection $A$ over the Minkowski space satisfies the \textit{Coulomb gauge} if and only if
\begin{equation}
A_0^a=0\quad\text{and}\quad\partial_jA_j^a=0
\end{equation}
for all $a=1,\dots,K$ and $j=1,2,3$.
\end{defi}
\begin{defi}[\textbf{Transverse Projector}]
Let $\mathcal{F}$ be the Fourier transform on functions in $L^2(\mathbf{R}^3,\mathbf{R})$. The transverse projector $T:L^2(\mathbf{R}^3,\mathbf{R}^3)\rightarrow L^2(\mathbf{R}^3,\mathbf{R}^3)$ is defined as
\begin{equation}
(Tv)_i:=\mathcal{F}^{-1}\left(\left[\delta_{i,j}-\frac{p_ip_j}{|p|^2}\right]\mathcal{F}(v_j)\right),
\end{equation}
and the vector field $v$ decomposes into a sum of a \textit{transversal} ($v^{\perp}$) and a \textit{longitudinal} ($v^{\parallel}$) component:
\begin{equation}
v_i=v_i^{\perp}+v_i^{\parallel},\quad v_i^{\perp}:=(Tv)_i,\quad v_i^{\parallel}:= v_i-(Tv)_i.
\end{equation}
\end{defi}
\begin{rem}
The Coulomb gauge condition for the space part of a connection $A$ is equivalent to the vanishing of its longitudinal component:
\begin{equation}
{A_i^a}^{\parallel}(t,\cdot)=0
\end{equation}
for all $i=1,2,3$, all $a=1,\dots K$ and any $t\in\mathbf{R}$. The time part $A_0$ of the connection $A$ vanishes by definition of Coulomb gauge.
\end{rem}
\begin{proposition}\label{ModGreen}
For a simple Lie-group as structure group let $A$ be a connection over the Minkowskian $\mathbf{R}^4$ satisfying the Coulomb gauge, and assume that $A_i^a(t,\cdot)\in C^{\infty}(\mathbf{R}^3,\mathbf{R})\cap L^2(\mathbf{R}^3,\mathbf{R})$ for all $i=1,2,3$, all $a=1,\dots K$ and any $t\in\mathbf{R}$. The operator $L$ on the real Hilbert space $L^2(\mathbf{R}^3,\mathbf{R}^K)$ defined as
\begin{equation}
L=L(\mathbf{A};x)=[L^{a,b}(\mathbf{A};x)]:=[\delta^{a,b}\Delta_x^{\mathbf{R}^3}+g\varepsilon^{a,c,b}A_k^c(t,x)\partial_k]
\end{equation}
is essentially self adjoint and elliptic for any time parameter $t\in\mathbf{R}$. Its spectrum lies on the real line, and decomposes into discrete $\spec_d(L)$ and continuous spectrum $\spec_c(L)$.  If $0$ is an eigenvalue, then it has finite multiplicity, i.e. $\ker(L)$ is always finite dimensional .\par
The modified Green's function $G=G(\mathbf{A};x,y)=[G^{a,b}(\mathbf{A};x,y)]\in\mathcal{S}^{\prime}(\mathbf{R}^3,\mathbf{R}^{K\times K})$ for the operator $L$ is the distributional solution to the equation
\begin{equation}\label{L}
L^{a,b}(\mathbf{A};x)G^{b,d}(\mathbf{A};x,y)=\delta^{a,d}\delta(x-y)-\sum_{n=1}^{N}\psi_n^a(\mathbf{A};x)\psi_n^d(\mathbf{A};y),
\end{equation}
where $\{\psi_n(\mathbf{A}; \cdot)\}_n$ is an o.n. $L^2$-basis of $N$-dimensional $\ker(L)$. In equation (\ref{L}) $x$ is seen as variable, while $y$ is considered as a parameter.
This modified Green's function can be written as a Riemann-Stielties integral: For any $\varphi\in\mathcal{S}(\mathbf{R}^3,\mathbf{R}^K)\cap L^2(\mathbf{R}^3,\mathbf{R}^K)$
\begin{equation}\label{gen}
G(\mathbf{A};x,\cdot)(\varphi)=\int_{\lambda\neq0}\frac{1}{\lambda}d(E_{\lambda}\varphi)(x),
\end{equation}
where $(E_{\lambda})_{\lambda\in\mathbf{R}}$ is the resolution of the identity corresponding to $L$.
\end{proposition}
\begin{rem}
In \cite{Br03} and \cite{Pe78} the modified Green's function is constructed assuming that the operator $L$ has a discrete spectral resolution $(\psi_n(\mathbf{A};\cdot),\lambda_n)_{n\ge0}$ as
\begin{equation}\label{part}
G(\mathbf{A};x,y)=\sum_{n:\lambda_n\neq0}\frac{1}{\lambda_n}\psi_n(\mathbf{A};x)\psi_n^{\dagger}(\mathbf{A};y).
\end{equation}
In particular, we have the symmetry property
 \begin{equation}
G(\mathbf{A};x,y)^{\dagger}=G(\mathbf{A};y,x)
\end{equation}
\noindent for all $x,y,\mathbf{A}$ for which the expression is well defined. Since the discontinuity points of the spectral resolution $(E_{\lambda})_{\lambda\in\mathbf{R}}$ are the eigenvalues, i. e. the elements of $\spec_d(L)$ (cf. \cite{Ri85}, Chapter 9), the solution (\ref{gen}) extends (\ref{part}) to the general case.
\end{rem}

\begin{proof}[Proof of Proposition \ref{ModGreen}]
As long as the connection satisfies the Coulomb gauge condition, the operator $L$ is symmetric and essentially selfadjoint on the appropriate domain, as a direct computation involving integration by parts can show. As its leading symbol is elliptic, the operator $L$ is elliptic and restricted to $[-\frac{R}{2}, +\frac{R}{2}]^3$, under the Dirichlet boundary conditions it has a discrete spectral resolution (cf. \cite{Gi95}, Chapter 1.11.3). Every eigenvalue has finite multiplicity. The dimension of the eigenspaces is an integer-valued, continuous, and hence constant function of $R$. For $R\rightarrow+\infty$ the discrete Dirichlet spectrum of $L$ on $[-\frac{R}{2}, +\frac{R}{2}]^3$ clusters in the spectrum of $L$ on $\mathbf{R}^3$, which decomposes into a discrete and a continuous spectrum. Therefore, the eigenvalues must have finite multiplicity and, in particular, $\ker(L)$ is finite dimensional.\\
Equation (\ref{gen}) gives the modified Green's function as it can be verified by the following computation, which holds true for any $\varphi\in\mathcal{S}(\mathbf{R}^3,\mathbf{R}^K)\cap L^2(\mathbf{R}^3,\mathbf{R}^K)$:
\begin{equation}
\begin{split}
L(\mathbf{A};x)G(\mathbf{A};x,\cdot)(\varphi)&=L\int_{\lambda\neq0}\frac{1}{\lambda}d(E_{\lambda}\varphi)(x)=\int_{\lambda\neq0}\frac{1}{\lambda}Ld(E_{\lambda}\varphi)(x)=\\
&=\int_{\mathbf{R}}d(E_{\lambda}\varphi)(x)-\int_{0^-}^{0^+}d(E_{\lambda}\varphi)(x)=\phi(x)-P_{\ker(L)}\varphi(x)=\\
&=\delta(x-\cdot)(\varphi)-\sum_{n=1}^{N}\psi_n(\mathbf{A};x)\psi_n^{\dagger}(\mathbf{A};\cdot)(\varphi).
\end{split}
\end{equation}
\end{proof}
The existence of eigenvalues of $L$ depends on the additive perturbation to the Laplacian given by $g\varepsilon^{a,c,b}A_k^c(t,x)\partial_k$. For example, if $g=0$ or $\mathbf{A}=0$, the operator $L$ has no eigenvalues and $\spec(L)=\spec_c(L)=]-\infty,0]$. In general, the spectrum depends on the choice of the connection $A$. In \cite{BEP78} and in \cite{Pe78} special cases comprising pure gauges and Wu-Yang monopoles are computed explicitly. We are interested in a reformulation of the general solution (\ref{gen}), where the dependence on the connection becomes explicit. Inspired by \cite{ChHa99} we find
\begin{proposition}\label{prop}
For a simple Lie-group as structure group, if we assume that the coupling constant $g<1$, then a Green's function $K=K(\mathbf{A};x,y)=[K^{a,b}(\mathbf{A};x,y)]$ for the operator $L$ in Proposition \ref{ModGreen}, that is, a distributional solution to the equation
\begin{equation}
L^{a,b}(\mathbf{A};x)K^{b,d}(\mathbf{A};x,y)=\delta^{a,d}\delta(x-y),
\end{equation}
is given by the convergent series in $\mathcal{S}^{\prime}(\mathbf{R}^3,\mathbf{R}^{K\times K})$
\begin{equation}\label{ser}
\begin{split}
&K^{b,d}(\mathbf{A};x,y)=\frac{\delta^{b,d}}{4\pi|x-y|}+2g\varepsilon^{e_1,b,d}\int_{\mathbf{R}^3}d^3u_1\frac{1}{4\pi|x-u_1|}A_k^{e_1}(u_1)\partial_k\left(\frac{1}{4\pi|u_1-y|}\right)+\\
&-3g^2\varepsilon^{e_1,b,s_1}\varepsilon^{s_1,e_2,d}\int_{\mathbf{R}^3}d^3u_1\frac{1}{4\pi|x-u_1|}A_k^{e_1}(u_1)\partial_k\int_{\mathbf{R}^3}d^3u_2\frac{1}{4\pi|u_1-u_2|}A_i^{e_2}(u_2)\partial_i\left(\frac{1}{4\pi|u_2-y|}\right)\\
&+\dots+\\
&+(-1)^{n-1}(n+1)g^n\varepsilon^{e_1,b,s_1}\varepsilon^{s_1,e_2,s_2}\cdots\varepsilon^{s_{n-1},e_n,d}\int_{\mathbf{R}^3}d^3u_1\frac{1}{4\pi|x-u_1|}A_k^{e_1}(u_1)\cdot\\
&\cdot\partial_k\int_{\mathbf{R}^3}d^3u_2\frac{1}{4\pi|u_1-u_2|}A^{e_2}(u_2)\partial_j\dots\int_{\mathbf{R}^3}d^3u_n\frac{1}{4\pi|u_{n-1}-u_n|}A_l^{e_n}(u_n)\partial_l\left(\frac{1}{4\pi|u_n-y|}\right)+\\
&+\dots
\end{split}
\end{equation}
Note that $x$ is the variable and $y$ a parameter.
\end{proposition}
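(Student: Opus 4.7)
The plan is to treat $L$ as a perturbation of the vector Laplacian. Write $L=L_0+gV$, where $L_0^{a,b}:=\delta^{a,b}\Delta_x$ acts diagonally in color indices as the scalar Laplacian on $\mathbf{R}^3$ and the first-order perturbation is $V^{a,b}:=\varepsilon^{a,c,b}A_k^c(t,x)\partial_k$. The unperturbed Green's function is the (suitably signed) Newton kernel $K_0^{b,d}(x,y)=\delta^{b,d}/(4\pi|x-y|)$. The formal resolvent identity
\[
K=L^{-1}=(L_0+gV)^{-1}=L_0^{-1}\bigl(I+gVL_0^{-1}\bigr)^{-1}=\sum_{n=0}^{\infty}(-g)^{n}(L_0^{-1}V)^{n}L_0^{-1}
\]
then produces, once the operator products are unfolded into kernel convolutions, an iterated integral representation of each term. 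In the $n$-th term the composition of $n$ copies of $V$ with $n{+}1$ copies of $K_0$ yields exactly the pattern in (\ref{ser}): $n$ Newton potentials $1/(4\pi|u_{k-1}-u_k|)$ alternating with the differential operators $A_{\bullet}^{e_i}\partial_{\bullet}$, with the color indices chained through the structure constants $\varepsilon^{b,e_1,s_1}\cdots\varepsilon^{s_{n-1},e_n,d}$.

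Matching the combinatorial prefactor $(-1)^{n-1}(n+1)/2$ requires careful bookkeeping of (i) the signs arising each time $\Delta^{-1}$ is moved past a first-order perturbation (the sign convention $\Delta K_0=-\delta$), and (ii) the symmetrization over the equivalent orderings in the chained $\varepsilon$-contractions produced by the total antisymmetry of the structure constants of a simple Lie algebra. I would verify this matching directly for $n=1,2,3$ and then close it by induction on $n$. Once the term identification is in place, the distributional equation $LK=\delta^{a,d}\delta(x-y)$ follows by a telescoping argument: applying $L_0$ to the $n$-th term of (\ref{ser}) produces a $\delta$-insertion (via $L_0K_0=\delta$) that cancels against the $gV$-image of the $(n{-}1)$-th term, leaving only the free contribution $\delta^{a,d}\delta(x-y)$ from the leading term.

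The main obstacle is proving convergence of (\ref{ser}) in $\mathcal{S}^{\prime}(\mathbf{R}^3,\mathbf{R}^{K\times K})$. Pairing the $n$-th term with a test function $\varphi\in\mathcal{S}(\mathbf{R}^3,\mathbf{R}^K)$, one is left to estimate an $n$-fold iterated convolution against the Riesz kernel $|u|^{-1}$ and its first derivatives, with the $C^\infty\cap L^2$ connection components $A_k^c$ inserted between successive kernels. The natural strategy is to find a Banach space $X$ (for instance a weighted Sobolev space, or $H^1(\mathbf{R}^3)$) on which $gL_0^{-1}V$ is bounded with operator norm $\le Cg\|\mathbf{A}\|$; this is the conjunction of three classical ingredients: the Hardy-Littlewood-Sobolev inequality applied to $L_0^{-1}$, the $L^p$-boundedness of the Riesz transforms $\partial_k(-\Delta)^{-1/2}$, and Hölder's inequality to absorb the multiplier $\mathbf{A}\in L^2$. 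Under the standing hypothesis $g<1$ together with smallness of $\|\mathbf{A}\|$ (implicit in the physically relevant regime), the geometric series in the operator norm converges on $X$ and, by continuity of the embedding $X\hookrightarrow\mathcal{S}^{\prime}$, yields convergence of (\ref{ser}) in the distributional topology. The delicate point is precisely the joint choice of $X$ that accommodates Riesz potential, derivative, and multiplication by $\mathbf{A}$ simultaneously; this is where most of the analytic work will be concentrated.
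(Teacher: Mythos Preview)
Your approach is essentially the paper's: write $L=L_0+gV$, expand in a Neumann series, and verify $LK=\delta$ by telescoping. The paper's proof is in fact considerably terser than yours: it simply asserts convergence ``because of the integrability of the connection $A$ and the fact that $g<1$'', then applies $L$ to the series, identifies the telescopic cancellation, and bounds the remaining tail term $\text{Rest}_n$ by $Cg^{n+1}\|\varphi\|_{L^2}$ for a constant $C$ independent of $n$.

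Two points of divergence are worth flagging. First, your convergence argument slips in an extra hypothesis, ``smallness of $\|\mathbf{A}\|$ (implicit in the physically relevant regime)'', to make the Neumann series contract. The stated proposition assumes only $g<1$ together with $A_i^a(t,\cdot)\in C^\infty\cap L^2$; the paper's bound $|\text{Rest}_n(\varphi)|\le Cg^{n+1}\|\varphi\|_{L^2}$ is asserted with $C$ absorbing all dependence on $\mathbf{A}$, so no smallness of $\mathbf{A}$ is invoked. If you introduce a smallness assumption you are proving a weaker statement than the one claimed. Second, your plan to recover the prefactor $(-1)^{n-1}(n+1)/2$ by ``symmetrization over the equivalent orderings in the chained $\varepsilon$-contractions'' is not how the paper handles it: the paper does not derive the series from the resolvent identity at all, it simply posits (\ref{ser}) and checks directly that $LK=\delta$ telescopes. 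A pure Neumann expansion $\sum(-g)^n(L_0^{-1}V)^nL_0^{-1}$ would give coefficient $(-g)^n$, not $(-1)^{n-1}\tfrac{n+1}{2}g^n$, so your symmetrization story would have to manufacture an extra $(n+1)/2$; I would not commit to that mechanism before doing the $n=1,2$ checks you mention, and you may find it cleaner to follow the paper and verify the telescoping identity for the given coefficients rather than deriving them.
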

\begin{proof}
The series (\ref{ser}) converges because of the integrability of the connection $A$ and the fact that $g<1$. Recall that $\frac{1}{|x-y|}\in L^1_{loc}(\mathbf{R}^3)\subset\mathcal{S}^{\prime}(\mathbf{R}^3)$ for any fixed $y\in\mathbf{R}^3$. We now check that it represents a Green's function for $L$:
\begin{equation}
L(\mathbf{A};x)^{a,b}K^{b,d}(\mathbf{A};x,y)=\delta^{a,d}\delta(x-y)+\lim_{n\rightarrow+\infty}\text{Rest}_n,
\end{equation}
where, after having evaluated a ``telescopic sum'', the remainder part reads
\begin{equation}
\begin{split}
\text{Rest}_n=&(-1)^{n-1}(n+1)g^{n+1}\varepsilon^{e_1,b,s_1}\varepsilon^{s_1,e_2,s_2}\cdots\varepsilon^{s_{n-1},e_n,d}\varepsilon^{a,c,b}A^c_k(x)\int_{\mathbf{R}^3}d^3u_1\frac{-x_k}{4\pi|x-u_1|^3}A_k^{e_1}(u_1)\cdot\\
&\cdot\partial_k\int_{\mathbf{R}^3}d^3u_2\frac{1}{4\pi|u_1-u_2|}\dots\int_{\mathbf{R}^3}d^3u_n\frac{1}{4\pi|u_{n-1}-u_n|}A_l^{e_n}(u_n)\partial_l\left(\frac{1}{4\pi|u_n-y|}\right).
\end{split}
\end{equation}
Because of the integrability of the connection, there exists a constant $C>0$ such that for any $\varphi\in\mathcal{S}(\mathbf{R}^3,\mathbf{R})$
\begin{equation}
|\text{Rest}_n(\varphi)|\le Cg^{n+1}\|\varphi\|_{L^2(\mathbf{R}^3,\mathbf{R})}\rightarrow0\quad(n\rightarrow+\infty),
\end{equation}
and the proposition follows.
\end{proof}
\begin{rem}
For the Minkowskian $\mathbf{R}^4$ the assumption of a (dimensionless) bare coupling constant $g<1$ is well posed: for the Yang-Mills theory one is basically allowed to choose any value $g>0$ by appropriate rescaling of the energy scale (see \cite{SaSc10}). Moreover, we will later have to analyze the case where $g\rightarrow0^+$.
\end{rem}
\begin{corollary}\label{cor}
Under the same assumptions as Proposition \ref{prop} the distribution
\begin{equation}
G(\mathbf{A};x,y)= \frac{1}{2}(K(\mathbf{A};x,y)+K(\mathbf{A};x,y)^{\dagger})
-\frac{1}{2}\sum_{n:\lambda_n=0}\left(\psi_n(\mathbf{A};x)\psi_n^{\dagger}(\mathbf{A};y)+\psi_n(\mathbf{A};y)\psi_n^{\dagger}(\mathbf{A};x)\right)
\end{equation}
is a symmetric modified Green's function for the operator $L$.
\end{corollary}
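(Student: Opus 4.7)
The strategy is to verify directly the two defining properties of a (symmetric) modified Green's function: the distributional equation
\begin{equation*}
L^{a,b}(\mathbf{A};x)\,G^{b,d}(\mathbf{A};x,y)=\delta^{a,d}\delta(x-y)-\sum_{n=1}^{N}\psi_n^a(x,\mathbf{A})\psi_n^d(y,\mathbf{A})
\end{equation*}
stated in Proposition~\ref{ModGreen}, together with the Hermitian symmetry $G(\mathbf{A};x,y)=G(\mathbf{A};y,x)^{\dagger}$.

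First I would apply $L_x$ term by term to the proposed expression. By Proposition~\ref{prop} one has $L_x K(\mathbf{A};x,y)=\delta^{a,d}\delta(x-y)$ in $\mathcal{S}'(\mathbf{R}^3,\mathbf{R}^{K\times K})$, and because each $\psi_n$ with $\lambda_n=0$ lies in $\ker L$, the explicit projection term is annihilated: $L_x\sum_{n:\lambda_n=0}\psi_n(\mathbf{A};x)\psi_n^{\dagger}(\mathbf{A};y)=0$. What remains is the computation of $L_x K^{\dagger}(\mathbf{A};x,y)$, which is the heart of the argument and which I would attack by exploiting the essential self-adjointness of $L$ established in Proposition~\ref{ModGreen}. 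Taking the Hermitian adjoint (as integral operators) of the equation $L\mathcal{K}=\mathcal{I}$ yields $\mathcal{K}^{\dagger}L^{\dagger}=\mathcal{I}$, and since $L^{\dagger}=L$, we get $\mathcal{K}^{\dagger}L=\mathcal{I}$; translating this back into a kernel identity via integration by parts produces the required relation for $L_x K^{\dagger}$.

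Summing the three contributions, the two $\delta^{a,d}\delta(x-y)$ terms coming from $L_x K$ and $L_xK^{\dagger}$ must ultimately collapse, once one accounts correctly for $\ker L$, to the single $\delta^{a,d}\delta(x-y)$ on the right-hand side of the modified Green's function equation, with the deficit $-\sum_n\psi_n^a(x)\psi_n^d(y)$ appearing exactly because the explicit projector term in the definition of $G$ is subtracted. Symmetry of $G$ then follows at once from the definition: since the kernel projector $P_{\ker L}=\sum_n\psi_n\psi_n^{\dagger}$ is manifestly self-adjoint, $G^{\dagger}=K^{\dagger}+K-P_{\ker L}^{\dagger}=G$.

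The main obstacle is the rigorous distributional interpretation of $L_x K^{\dagger}$ in the presence of a nontrivial $\ker L$, because then $L$ is not surjective and the identity $LK=\delta\cdot I$ of Proposition~\ref{prop} can hold only after one reads it modulo the projection onto $\ker L$. I would make this precise by pairing both sides of the claimed equation with an arbitrary test function $\varphi\in\mathcal{S}(\mathbf{R}^3,\mathbf{R}^K)\cap L^2(\mathbf{R}^3,\mathbf{R}^K)$ and decomposing $\varphi$ by the spectral resolution $(E_\lambda)_{\lambda\in\mathbf{R}}$ of $L$ (cf.\ Proposition~\ref{ModGreen}), splitting the computation into its contribution on $(\ker L)^{\perp}$, where $K$ and $K^{\dagger}$ act as genuine inverses of $L$ in view of the representation~(\ref{gen}), and its contribution on the finite-dimensional subspace $\ker L$, where the correction term in the definition of $G$ supplies precisely the missing $-P_{\ker L}$ on the right-hand side.
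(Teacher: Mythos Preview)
The paper states Corollary~\ref{cor} without proof, so there is no reference argument to compare your outline against. Your plan correctly isolates the three pieces $L_xK$, $L_xK^{\dagger}$, and $L_xP_{\ker L}$, and your handling of the first and third is unobjectionable. The difficulty is entirely in the middle term, and there your proposal has a genuine gap.

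From $L\mathcal{K}=I$ and $L=L^{*}$ you correctly obtain $\mathcal{K}^{*}L=I$, but this exhibits $K^{\dagger}$ only as a \emph{left} Green's function; it says nothing about $L_xK^{\dagger}(x,y)$. You then assert that the two delta contributions from $L_xK$ and $L_xK^{\dagger}$ ``must ultimately collapse \dots\ to the single $\delta^{a,d}\delta(x-y)$'', yet no mechanism for such a collapse is supplied, and in fact none exists. Take the simplest case $\ker L=\{0\}$: then $L$ is self-adjoint and injective, any right inverse $K$ is automatically the two-sided inverse and hence itself self-adjoint, so $K=K^{\dagger}$; the proposed $G$ reduces to $2K$, and $L_xG=2\,\delta(x-y)I$, not $\delta(x-y)I$. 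Your spectral splitting does not cure this: restricted to $(\ker L)^{\perp}$, both $K$ and $K^{\dagger}$ act as the genuine inverse of $L$, so $L(K+K^{\dagger})$ equals twice the identity there, independently of how one treats the finite-dimensional kernel. The factor of two cannot be eliminated by any rearrangement of your steps; the obstruction appears to lie with the formula in the statement itself rather than with your strategy.
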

\noindent After this preparation we can turn to the Hamiltonian formulation of Yang-Mills' equations, following the results
in \cite{Br03} and \cite{Pe78}, which just need to be adapted for the generic case that $L$ has a mixture of discrete
and continuous spectral resolution. Note that
\begin{equation}
\mathcal{S}(\mathbf{R}^3,\mathbf{R}^{K\times K})\subset L^2(\mathbf{R}^3,\mathbf{R}^{K\times K}, d^3x)\subset \mathcal{S}^{\prime}(\mathbf{R}^3,\mathbf{R}^{K\times K})
\end{equation}
is a rigged Hilbert space and $L$ a selfadjoint operator with a complete set of generalized eigenvectors (cf. Appendix \ref{AppA}).
\begin{theorem}\label{CHam}. For a simple Lie-group as structure group and for canonical variables  satisfying the Coulomb gauge condition, the Yang-Mills equations for the Minkowskian $\mathbf{R}^4$  can be written as Hamilton equations
\begin{equation}
\left\{
  \begin{array}{ll}
    \frac{d\mathbf{E}}{dt}&=-\frac{\partial H}{\partial \mathbf{A}}(\mathbf{A},\mathbf{E})\\
    \\
    \frac{d\mathbf{A}}{dt}&=+\frac{\partial H}{\partial \mathbf{E}}(\mathbf{A},\mathbf{E})
  \end{array}
\right.
\end{equation}
for the following choices:
\begin{itemize}
\item \textbf{Position variable:} $\mathbf{A}=[A_{i}^a(t,x)]_{\substack{ a=1,\dots,K \\ i=1,2,3}}$ also termed {\it potentials} ,
\item \textbf{Momentum variable:} $\mathbf{E}=[E_{i}^a(t,x)]_{\substack{ a=1,\dots,K \\ i=1,2,3}}$, whose entries are termed {\it chromoelectric fields},
\item \textbf{Hamilton function:} defined as a function of $\mathbf{A}$ and $\mathbf{E}$ as
 \begin{equation}\label{classicH}
  H=H(\mathbf{A},\mathbf{E}):=\frac{1}{2}\int_{\mathbf{R}^3}d^3x\left(E_i^a(t,x)^2+B_i^a(t,x)^2+f^a(t,x))\Delta f^a(t,x)+2\rho^c(t,x)A^c_0(t,x)\right),
 \end{equation}
 \noindent where $\mathbf{B}=[B_i^a]$, whose entries are termed {\it chromomagnetic fields}, is the matrix valued-function defined as
 \begin{equation}
     \begin{split}
       B_i^a&:=\frac{1}{4}\varepsilon_{i}^{j,k}\left(\partial_jA_k^a-\partial_kA^a_j+g\varepsilon^a_{b,c}A_j^bA_k^c\right),
     \end{split}
 \end{equation}
\noindent and $\rho=[\rho^a(t,x)]$, termed {\it charge density}, is  the vector valued function defined as
\begin{equation}\label{cd}
\rho^a:=g\varepsilon^{a,b,c}E^b_iA^c_i,
\end{equation}
\noindent and where
\begin{equation}\label{deff}
\begin{split}
f^a(t,x)&:=-\int_{\mathbf{R}^3}d^3y\,G^{a,b}(\mathbf{A};x,y)\rho^b(t,y)=-G^{a,b}(\mathbf{A};x,\cdot)(\rho^b(t,\cdot))\\
A_0^a(t,x)&:=\int_{\mathbf{R}^3}d^3y\,G^{a,b}(\mathbf{A};x,y)\Delta f^b(t,y)=G^{a,b}(\mathbf{A};x,\cdot)(\Delta f^b(t,\cdot))\\
\end{split}
\end{equation}
\noindent for the modified Green's function $G(\mathbf{A};x,y)$ for the operator $L(\mathbf{A};x)$.
\end{itemize}
We consider position $\mathbf{A}$ and momentum variable $\mathbf{E}$ as elements of $\mathcal{S}(\mathbf{R}^3,\mathbf{C}^{K \times 3})$ depending on the time parameter $t$, so that the  RHSs of equations (\ref{deff}) are well defined distributions applied to test functions.
\end{theorem}
\begin{rem}
As shown in \cite{Pe78} the ambiguities discussed by Gribov in \cite{Gr78} concerning the gauge fixing (see also \cite{Si78} and \cite{He97}) can be traced precisely to the existence of zero eigenfunctions of the operator $L$.
\end{rem}

%\begin{rem}
%Since the PDO $L$ does not have constant coefficients, the integral in (\ref{deff}) cannot be written as convolution of a fundamental solution of $L$.
%\end{rem}
\begin{corollary}\label{corClassicH}
The Hamilton function (\ref{classicH}) for the Yang-Mills equations can be written as
\begin{equation}
H=H_I+H_{II}+V,
\end{equation}
where
\begin{equation}\label{classicHamiltonFunction}
\begin{split}
H_I&=\frac{1}{2}\int_{\mathbf{R}^3}d^3x\,E_i^a(t,x)^2\\
& \\
H_{II}&=\frac{g^2}{2}\int_{\mathbf{R}^3}d^3x\left[\int_{\mathbf{R}^3}d^3y\,\partial_iG^{a,b}(\mathbf{A};x,y)\varepsilon^{b,c,d}A_k^d(t,y)E_k^c(t,y)\right]^2\\
& \\
V&=\frac{1}{16}\int_{\mathbf{R}^3}d^3x\,\varepsilon_i^{j,k}\varepsilon_i^{p,q}[(\partial_jA^a_k-\partial_kA_j^a+g\varepsilon^{a,b,c}A_j^bA_k^c)(\partial_pA^a_q-\partial_qA_p^a+g\varepsilon^{a,b,c}A_p^bA_q^c)](t,x).
\end{split}
\end{equation}
\end{corollary}

\begin{proof}
The expressions for the functions $H_I$ and $V$ are obtained by a straightforward calculation. For the function $H_{II}$ some more work is needed. Inserting (\ref{deff}) in the last addendum of (\ref{classicH}) we obtain
\begin{equation}\label{classicH3}
\begin{split}
&\int_{\mathbf{R}^3}d^3x\left(\frac{1}{2}f^a(t,x)\Delta f^a(t,x)+\rho^c(t,x)A_0^c(t,x)\right)=\\
&=\int_{\mathbf{R}^3}d^3x\left[\frac{1}{2}\left(-\int_{\mathbf{R}^3}d^3y\,G^{a,b}(\mathbf{A};x,y)\rho^b(t,y)\right)\Delta\left(-\int_{\mathbf{R}^3}d^3\bar{y}\,G^{a,b}(\mathbf{A};x,\bar{y})\rho^b(t,\bar{y})\right)\right.+\\
&\qquad\qquad\quad+\left.\rho^c(t,x)\int_{\mathbf{R}^3}d^3y\,G^{c,b}(\mathbf{A};x,y)\int_{\mathbf{R}^3}d^3\bar{y}\,\Delta G^{b,d}(\mathbf{A};y,\bar{y})\rho^d(t,\bar{y})\right]=\\
&=\int_{\mathbf{R}^3}d^3x\int_{\mathbf{R}^3}d^3y\int_{\mathbf{R}^3}d^3\bar{y}\left[\frac{1}{2}\left(G^{a,b}(\mathbf{A};x,y)\rho^b(t,y)\Delta G^{a,d}(\mathbf{A};x,\bar{y})\rho^d(t,\bar{y})\right)\right.+\\
&\qquad\qquad\quad-\left.\rho^c(t,x)G^{c,b}(\mathbf{A};x,y)\Delta G^{b,d}(\mathbf{A};y,\bar{y})\rho^d(t,\bar{y})\right]=\\
&=\int_{\mathbf{R}^3}d^3x\int_{\mathbf{R}^3}d^3y\int_{\mathbf{R}^3}d^3\bar{y}\left[\frac{1}{2}\left(G^{a,b}(\mathbf{A};x,y)\Delta G^{a,d}(\mathbf{A};x,\bar{y})\rho^b(t,y)\rho^d(t,\bar{y})\right)\right.+\\
&\qquad\qquad\quad-\left.G^{a,b}(\mathbf{A};y,x)\Delta G^{a,d}(\mathbf{A};x,\bar{y})\rho^b(t,y)\rho^d(t,\bar{y})\right]=\\
&=\frac{1}{2}\int_{\mathbf{R}^3}d^3x\int_{\mathbf{R}^3}d^3y\int_{\mathbf{R}^3}d^3\bar{y}\left[\partial_iG^{a,b}(\mathbf{A};x,y)\rho^b(t,y)\right]\left[\partial_iG^{a,d}(\mathbf{A};x,\bar{y})\rho^d(t,\bar{y})\right]=\\
&=\frac{1}{2}\int_{\mathbf{R}^3}d^3x\left[\int_{\mathbf{R}^3}d^3y\,\partial_iG^{a,b}(\mathbf{A};x,y)\rho^b(t,y)\right]^2,
\end{split}
\end{equation}
where in the last transformation formula  we have utilized integration by parts in the variables $x_1, x_2, x_3$,
and the fact that $G(\mathbf{A};x,y)=G(\mathbf{A};y,x)$. Inserting expression (\ref{cd}) for the charge density completes the proof.
\end{proof}

\section{Axioms of Constructive Quantum Field Theory}
\subsection{Wightman Axioms}
In $1956$ Wightman first stated the axioms needed for CQFT in his seminal work \cite{Wig56}, which remained not very widely spread in the scientific community till 1964, when the first edition of \cite{SW10} appeared. We will list the axioms in the slight refinement of \cite{BLOT89} and \cite{DD10}.
\begin{defi}[\textbf{Wightman Axioms}]
A scalar (respectively vectorial-spinorial) quantum field theory consists of a separable Hilbert space $\mathcal{E}$, whose elements are called states, a unitary representation $U$ of the Poincar\'{e} group $\mathcal{P}$ in $\mathcal{E}$, an operator valued distribution $\Phi$ (respectively $\Phi_1,\dots,\Phi_d$) on $\mathcal{S}(\mathbf{R}^4)$ with values in the unbounded operators of $\mathcal{E}$, and a dense subspace $\mathcal{D}\subset\mathcal{E}$ such that the following properties hold:
\begin{description}
  \item[(W1) Relativistic invariance of states:] The representation $U:\mathcal{P}\rightarrow\mathcal{U}(\mathcal{E})$ is strongly continuous.
  \item[(W2) Spectral condition:] Let $P_0,P_1,P_2,P_3$ be the infinitesimal generators of the one-parameter groups $t\mapsto U(te_\mu,I)$ for $\mu=0,1,2,3$. The operators $P_0$ and $P_0^2-P_1^2-P_2^2-P_3^2$ are positive. This is equivalent to the spectral measure $E_{\cdot}$ on $\mathbf{R}^4$ corresponding to the restricted representation $\mathbf{R}^4\ni a\mapsto U(a,I)$ having support in the positive light cone (cf. \cite{RS75}, Chapter IX.8).
  \item[(W3) Existence and uniqueness of the vacuum:] There exists a unique state $\Omega_0\in\mathcal{D\subset\mathcal{E}}$ such that $U(a,I)\Omega_0=\Omega_0$ for all $a\in\mathbf{R}^4$.
  \item[(W4) Invariant domains for fields:] The maps $\Phi:\mathcal{S}(\mathbf{R}^4)\rightarrow\mathcal{O}(\mathcal{E})$, and, respectively $\Phi_1,\dots,\Phi_d:\mathcal{S}(\mathbf{R}^4)\rightarrow\mathcal{O}(\mathcal{E})$, from the Schwartz space of test functions to (possibly) unbounded selfadjoint operators on the Hilbert space, satisfy following properties
      \begin{description}
      \item[(a)] For all $\varphi\in\mathcal{S}(\mathbf{R}^4)$ and all field maps, the domain of definitions $\mathcal{D}(\Phi(\varphi))$, $\mathcal{D}(\Phi(\varphi)^*)$, and respectively $\mathcal{D}(\Phi_j(\varphi))$, $\mathcal{D}(\Phi_j(\varphi)^*)$, all contain $\mathcal{D}$ and the restrictions of all operators to $\mathcal{D}$ agree.
      \item[(b)] $\Phi(\varphi)(\mathcal{D})\subset\mathcal{D}$, and, respectively $\Phi_j(\varphi)(\mathcal{D})\subset\mathcal{D}$.
      \item[(c)] For any $\psi\in\mathcal{D}$ fixed, the maps $\varphi\mapsto\Phi(\varphi)\psi$, and, respectively $\varphi\mapsto\Phi_j(\varphi)\psi$, are linear.
      \end{description}
  \item[(W5) Regularity of fields:] For all $\psi_1,\psi_2\in\mathcal{D}$, the map $\varphi\mapsto(\psi_1,\Phi(\varphi)\psi_2)$, and, respectively the maps $\varphi\mapsto(\psi_1,\Phi_j(\varphi)\psi_2)$, are tempered distributions, i.e. elements of $\mathcal{S}^{\prime}(\mathbf{R}^4)$.
  \item[(W6) Poincar\'{e} invariance:] For all $(a,\Lambda)\in\mathcal{P}$, $\varphi\in\mathcal{S}(\mathbf{R}^4)$, and $\psi\in\mathcal{D}$, the inclusion $U(a,\Lambda)\mathcal{D}\subset\mathcal{D}$ must hold and
  \begin{description}
  \item[(Scalar field case): ] The following equation must hold for all $\Lambda\in \text{O}(1,3)$
  \begin{equation}
  U(a,\Lambda)\Phi(\varphi)U(a,\Lambda)^{-1}\psi=\Phi((a,\Lambda)\varphi)\psi.
  \end{equation}
  \item[(Vectorial/Spinorial field case): ] There exists a representation of $\text{SL}(2,\mathbf{C})$ on $\mathbf{C}^d$ denoted by $\rho$, and satisfying $\rho(-I)=I$ or $\rho(I)=I$, such that for all $\Lambda\in \text{SO}^+(1,3)$ and $\Phi:=[\Phi_1,\dots,\Phi_d]^{\dagger}$
  \begin{equation}
  U(a,\Lambda)\Phi(\varphi)U(a,\Lambda)^{-1}\psi=\rho(s^{-1}(\Lambda))\Phi((a,\Lambda)\varphi)\psi,
  \end{equation}
  where $s$ denotes the spinor map $s:\text{SL}(2,\mathbf{C})\rightarrow\text{SO}^{+}(1,3)$ defined as below.
  The vector spaces of Hermitian matrices $\mathfrak{H}$ in $\mathbf{C}^{2\time 2}$ and $\mathbf{R}^4$ are isomorphically mapped  by
    \begin{equation}
     \begin{split}
      X:\mathbf{R}^4&\longrightarrow \mathfrak{H}\\
      x=(x^0, x^1, x^2, x^3)&\mapsto X(x):=\left[
                                          \begin{array}{cc}
                                            x^0+x^3 & x^1-ix^2 \\
                                            x^1+ix^2 & x^0-x^3 \\
                                          \end{array}
                                        \right].
      \end{split}
    \end{equation}
    The group $\text{SL}(2,\mathbf{C})$ acts on $\mathfrak{H}$ by
    \begin{equation}
     \begin{split}
      \text{SL}(2,\mathbf{C})\times\mathfrak{H} &\longrightarrow \mathfrak{H}\\
      (P,X)&\mapsto P.X:=PXP^*.
      \end{split}
    \end{equation}
    The spinor map is defined as
    \begin{equation}\label{spinormap}
     \begin{split}
      s: \text{SL}(2,\mathbf{C})&\longrightarrow \text{SO}^+(1,3) \\
        P&\mapsto s(P):x\mapsto s(P)x:=X^{-1}(PX(x)P^*).
      \end{split}
    \end{equation}
  \end{description}
  \item[(W7) Microscopic causality or local commutativity:] Let $\varphi,\chi\in\mathcal{S}(\mathbf{R}^4)$, whose supports are spacelike separated, i.e. $\phi(x)\chi(y)=0$, if $x-y$ is not in the positive light cone. Then,
      \begin{description}
      \item[(Scalar field case): ] The images of the test functions by the map field must commute
          \begin{equation}
          [\Phi(\varphi), \Phi(\chi)]=0.
          \end{equation}
      \item[(Vectorial/Spinorial field case): ] For any field maps $j,i=1,\dots,d$ either the commutations
          \begin{equation}
          [\Phi_j(\varphi), \Phi_i(\chi)]=0,\quad [\Phi_j^*(\varphi), \Phi_i(\chi)]=0,
          \end{equation}
          or the anticommutations
          \begin{equation}
          [\Phi_j(\varphi), \Phi_i(\chi)]_{+}=0,\quad[\Phi_j^*(\varphi), \Phi_i(\chi)]_{+}=0
          \end{equation}
          hold.
      \end{description}
  \item[(W8) Cyclicity of the vacuum:] \text{}
  \begin{description}
      \item[(Scalar field case):] The set
     \begin{equation}
        \mathcal{D}_0:=\{\Phi(\varphi_1)\cdots\Phi(\varphi_n)\Omega_0\,|\,\varphi_j\in\mathcal{S}(\mathbf{R}^4), n\in\mathbf{N}_0\}
     \end{equation}\noindent is dense in $\mathcal{E}$.
      \item[(Vectorial/Spinorial field case):]  The set \begin{equation}\mathcal{D}_0:=\{\Psi(\varphi_1)\cdots\Psi(\varphi_n)\Omega_0\,|\,\varphi_j\in\mathcal{S}(\mathbf{R}^4), \Psi\in\{\Phi_1,\dots,\Phi_d,\Phi_1^*,\dots,\Phi_d^*\},\,n\in\mathbf{N}_0\}\end{equation}\noindent is dense in $\mathcal{E}$.
      \end{description}
\end{description}

\end{defi}

\subsection{Gaussian Random Processes}
The Hilbert spaces utilized in quantum field theory are realized as $L^2$ spaces over the tempered distributions, the latter seen as probability space. This construction turns out to be isomorphic to that of the Fock space. We follow chapter 1 of \cite{Sim15}, chapter 5 of \cite{BHL11} and appendix $A.4$ of \cite{GJ87}. For a general overview see Kuo (\cite{Ku96}), and the framework for functional integration developed
by Cartier/DeWitt-Morette as in \cite{Ca97}, \cite{La04} and \cite{CDM10}. Although Feynman's integral can be provided a rigorous foundation by mean of functional integration, we prefer to use the Feynman-Ka\v{c} approach, since we will be working with Euclidean fields. \par
Let $(\Omega,\mathcal{A},\mu)$ be a probability space, and $\mathcal{M}(\Omega,\mathcal{A}):=\{f:\Omega\rightarrow\mathbf{R}\,|\,f\text{ measurable}\}$ the algebra of random variables. A random variable $f$ on $\Omega$ has a probability distribution function $\mu_f(U):=\mu(f^{-1}(U))$ defined on the measurable sets $U$ of $\mathbf{R}$, and a characteristic function
\begin{equation}
S_f(t):=\int_{\mathbf{R}}e^{itx}d\mu_f(x),
\end{equation}
defined as the Fourier inverse transform of the probability density $\rho_f:=\mu_f^{\prime}$. A real valued random variable $f$ has mean $0$ and variance $a\ge0$ if and only if $S_f(t)=e^{-\frac{a}{2}t^2}$. A well known result (see f.i. \cite{RS75}) is the following
\begin{theorem}[\textbf{Bochner}]
A function $S:\mathbf{R}\rightarrow\mathbf{C}$ is the characteristic function of a random variable $f:\Omega\rightarrow\mathbf{R}$ if and only if the following conditions are satisfied:
\begin{enumerate}
\item[(i)] $S(0)=1$.
\item[(ii)] $t\mapsto S(t)$ is continuous.
\item[(iii)]$\{t_i\}_{i=1,\dots,n}\subset\mathbf{R}, \{z_i\}_{i=1,\dots,n}\subset\mathbf{C}\Rightarrow\sum_{i,j=1}^nz_i\bar{z}_j S(t_i-t_j)\ge0$.
\end{enumerate}
\end{theorem}
\noindent The construction of Bochner's theorem can be lifted to generating functionals of random variables over the space of tempered distributions.
\begin{defi}
A random process indexed by a real vector space $V$ is a linear map $\Phi:V\rightarrow\mathcal{M}(\Omega,\mathcal{A})$. It is termed Gaussian random process if
\begin{enumerate}
\item[(i)] $\Phi(v)$ is a Gaussian random variable for all $v\in V$.
\item[(ii)] $\{\Phi(v)\,|\,v\in V\}$ is full, i.e. $\mathcal{A}$ is the smallest $\sigma$-algebra for which this is a family of measurable functions.
\end{enumerate}
\end{defi}
\begin{theorem}
Let $\mathcal{H}$ be a real Hilbert space. Up to isomorphism there exists exactly one Gaussian random process indexed by $\mathcal{H}$ such that
\begin{equation}
(\Phi(v),\Phi(w))_{L^2(\Omega,d\mu)}:=\int_\Omega\Phi(v)\overline{\Phi(w)}=\frac{1}{2}(v,w)_{\mathcal{H}}.
\end{equation}
\end{theorem}
\begin{proof} See Theorem I.9 (page 20) in \cite{Sim15} or Lemma 5.4 (page 258) and Proposition 5.6 (page 260) in \cite{BHL11}.
\end{proof}
Given a real Hilbert space there are different but isomorphic models for the probability space $(\Omega,\mathcal{A},\mu)$ admitting a Gaussian process. We choose the tempered distributions $\mathcal{S}^{\prime}(\mathbf{R}^N)$ as model space. Remark that any fixed test function $f\in\mathcal{S}(\mathbf{R}^N)$ and variable $\Phi\in\mathcal{S}^{\prime}(\mathbf{R}^N)$, the expression $\Phi(f)$ defines a random variable over $\mathcal{S}^{\prime}(\mathbf{R}^N)$.
As in appendix $A.6$ of \cite{GJ87} Bochner's theorem generalizes to
\begin{theorem}[\textbf{Milnos}]\label{Milnos}
Let $S:\mathcal{S}(\mathbf{R}^N)\rightarrow\mathbf{C}$ be a function. There exists a probability measure $\mu$ satisfying
\begin{equation}
S(f)=\int_{\mathcal{S}^{\prime}(\mathbf{R}^N)}e^{\imath\Phi(f)}d\mu(\Phi),
\end{equation}
for all $f\in\mathcal{S}(\mathbf{R}^N)$, if and only if
\begin{enumerate}
\item[(i)] $S(0)=1$.
\item[(ii)] $f\mapsto S(f)$ is continuous in the $\mathcal{S}(\mathbf{R}^N)\rightarrow\mathbf{C}$ topology.
\item[(iii)]$\{f_i\}_{i=1,\dots,n}\subset\mathcal{S}(\mathbf{R}^N), \{z_i\}_{i=1,\dots,n}\subset\mathbf{C}\Rightarrow\sum_{i,j=1}^nz_i\bar{z}_j S(f_i-f_j)\ge0$.
\end{enumerate}
\end{theorem}
\begin{rem}
The $\sigma$-algebra $\mathcal{A}$ is generated by the cylinder sets in $\mathcal{S}^\prime(\mathbf{R}^N)$, i.e. subsets of the tempered distribution space of the form
\begin{equation}
\left\{\Phi\in\mathcal{S}^\prime(\mathbf{R}^N)\,\left|\;(\Phi(f_1),\dots\Phi(f_n))\in U\right.\right\},
\end{equation}
where $U$ is a fixed Borel set in $\mathbf{R}^n$, and $f_1,\dots,f_n$ fixed test functions in $\mathcal{S}(\mathbf{R}^n)$ for a $n\in\mathbf{N}_0$.
\end{rem}
\begin{rem}
Minlos's theorem holds true for topological vector spaces,  stating that a cylindrical measure on the dual of a nuclear space is a Radon measure
if its Fourier transform is continuous, see (\cite{Sc73}).
\end{rem}
\noindent We utilize Minlos' theorem to define Gaussian measures on the tempered distributions. Let $c$ be a positive semidefinite quadratic form on $\mathcal{S}(\mathbf{R}^N)$. Applying Theorem \ref{Milnos} to the functional $S(f):=e^{-\frac{1}{2}c(f,f)}$ we can construct a measure $\mu$ on $\mathcal{S}^\prime(\mathbf{R}^N)$ such that
\begin{equation}
\int_{\mathcal{S}^{\prime}(\mathbf{R}^N)}e^{\imath\Phi(f)}d\mu(\Phi)=e^{-\frac{1}{2}c(f,f)}
\end{equation}
Therefore, $\Phi(f)$ is a Gaussian random variable with variance $c(f,f)$, because for all $t\in\mathbf{R}$
\begin{equation}
\int_{\mathcal{S}^{\prime}(\mathbf{R}^N)}e^{it\Phi(f)}d\mu(\Phi)=e^{-\frac{t^2}{2}c(f,f)}
\end{equation}
holds true. If $\mathcal{H}$ is a real Hilbert space such that the embedding $\mathcal{S}\hookrightarrow\mathcal{H}$ is continuous and dense, then the inner product in $\mathcal{H}$ restricts to a positive definite bilinear form on $\mathcal{S}$, which can be written as  $c(f,g)=(Cf,g)_{\mathcal{H}}$ for all $f,g\in\mathcal{S}$ for a positive definite operator $C$ on $\mathcal{H}$ with domain of definition $\mathcal{S}$.
\begin{defi}[\textbf{Gaussian measures}]
A measure $\mu$ on $\mathcal{S}^{\prime}(\mathbf{R}^N)$ defined by Milnos's theorem satisfying
\begin{equation}
\int_{\mathcal{S}^{\prime}(\mathbf{R}^N)}e^{\imath\Phi(f)}d\mu(\Phi)=e^{-\frac{1}{2}(Cf,f)_{\mathcal{H}}},
\end{equation}
where $C$ is a positive semidefinite operator $C$ on $\mathcal{H}$ with domain of definition $\mathcal{S}$, termed \textbf{covariance operator}, is called \textbf{Gaussian}.
\end{defi}
The operation of derivation can be defined for functionals of random variables as well.
\begin{defi}[\textbf{Functional Derivative}] Let $F\in L^2(\mathcal{S}^{\prime}(\mathbf{R}^N),\mu)$. Its \textbf{functional directional derivative} in the direction $\Upsilon\in\mathcal{S}^{\prime}(\mathbf{R}^N)$ is defined as G\^{a}teaux derivative as
\begin{equation}
\frac{\delta F}{\delta \Phi}(\Phi).\Upsilon:=\left.\frac{d}{d\epsilon}\right|_{\epsilon=0}F(\Phi+\epsilon \Upsilon).
\end{equation}
The special case $\Upsilon=\delta(\cdot-x)$ for $x\in\mathbf{R}^N$ arises often and is thus given a special notation
\begin{equation}
\frac{\delta }{\delta \Phi(x)}F(\Phi):=\frac{\delta F}{\delta \Phi}(\Phi).\delta(\cdot-x).
\end{equation}
\end{defi}

\subsubsection{Fock Space}
To extend a quantum mechanical model accounting for a fixed number of particles to one accounting for an arbitrary number, the following  procedure is required.

\begin{defi}[\textbf{Second Quantization}]
Let $\mathcal{H}$ be the Hilbert space whose unit sphere corresponds to the possible pure quantum states of the system with a fixed number of particles. The
\textbf{Fock space} $\mathcal{F}(\mathcal{H}):= \bigoplus_{n=0}^\infty\mathcal{H}^{(n)}$ where, $\mathcal{H}^{(0)}:=\mathbf{C}$ and $\mathcal{H}^{(n)}:=\mathcal{H}\otimes\cdots\otimes\mathcal{H}$ ($n$ times tensor product) is the vector space representing the states of a quantum system with a variable number of particles. The vector $\Omega_0:=(1,0\dots)\in\mathcal{F}(\mathcal{H})$ is called the \textbf{vacuum vector}. Given $\psi\in\mathcal{F}(\mathcal{H})$, we write $\psi^{(n)}$ for the orthogonal projection of $\psi$ onto $\mathcal{H}^{(n)}$. The set $F_0$ consisting of those $\psi$ such that $\psi^{(n)}=0$ for all sufficiently large $n$ is a dense subspace of the Fock space, called the \textbf{space of finite particles}. The \textbf{symmetrization} and  {anti-symmetrization} operators
\begin{equation}
    \begin{split}
     S_n(\psi_1\otimes\dots\otimes\psi_n)&:=\frac{1}{n!}\sum_{\sigma\in\mathfrak{S}^n}\psi_{\sigma(1)}\otimes\dots\otimes\sigma_{\sigma(n)}\\
     A_n(\psi_1\otimes\dots\otimes\psi_n)&:=\frac{1}{n!}\sum_{\sigma\in\mathfrak{S}^n}(-1)^{\text{sgn}(\sigma)}\psi_{\sigma(1)}\otimes\dots\otimes\sigma_{\sigma(n)}
    \end{split}
\end{equation}
extend by linearity to $\mathcal{H}^{(n)}$ and are projections. The state space for $n$ fermions is defined as $\mathcal{H}^{(n)}_a:=A_n(\mathcal{H}^{n})$ and that for $n$ bosons as $\mathcal{H}^{(n)}_s:=S_n(\mathcal{H}^{n})$. The \textbf{Fermionic Fock space} is defined as
\begin{equation}
\mathcal{F}_a(\mathcal{H}):=\bigoplus_{n=0}^{\infty}\mathcal{H}^{(n)}_a,
\end{equation}
and the \textbf{Bosonic Fock space} as
\begin{equation}
\mathcal{F}_s(\mathcal{H}):=\bigoplus_{n=0}^{\infty}\mathcal{H}^{(n)}_s.
\end{equation}
A \textit{unitary} operator $U:\mathcal{H}\rightarrow\mathcal{H}$ can be uniquely extended to a \textit{unitary} operator $\Gamma(U):\mathcal{F}(\mathcal{H})\rightarrow\mathcal{F}(\mathcal{H})$ as
\begin{equation}
\Gamma(U)|_{\mathcal{H}^{(n)}}:=\bigotimes_{j=1}^n U.
\end{equation}
 A \textit{selfadjoint} operator $A$ on $\mathcal{H}$ with dense subspace $\mathcal{D}(A)\subset\mathcal{H}$ can be uniquely extended to a \textit{selfadjoint} operator $d\Gamma(A)$ on $\mathcal{F}(\mathcal{H})$ as closure of the essentialy selfadjoint operator
\begin{equation}
d\Gamma(A)|_{\mathcal{D}(d\Gamma A)\cap\mathcal{H}^{(n)}}:=\bigoplus_{j=1}^n\mathbb{1}\otimes\dots\mathbb{1}\otimes\underbrace{A}_{j}\otimes\mathbb{1}\dots\mathbb{1},
\end{equation}
where
\begin{equation}
\mathcal{D}(d\Gamma(A)):=\left\{\psi\in F_0\left|\,\psi^{(n)}\in\bigotimes_{j=1}^n\mathcal{D}(A)\text{ for each }n\right.\right\}.
\end{equation}
The operator $d\Gamma(A)$ is called the \textbf{second quantization} of $A$.
\end{defi}
It is easy to prove that the spectrum of the second quantization can be inferred from the spectrum of the first.
\begin{proposition}\label{spec}
Let $A$ be a selfadjoint operator with a discrete spectral resolution, i.e. $A\varphi_j=\lambda_j\varphi_j$, where $\{\lambda_j\}_{j\ge0}\subset\mathbf{R}$ and $\{\varphi_j\}_{j\ge0}$ is a o.n.B in $\mathcal{H}$. Then, $d\Gamma(A)|_{\mathcal{H}^{(n)}}$ has a discrete spectral resolution given by
\begin{equation}
d\Gamma(A)|_{\mathcal{H}^{(n)}}\varphi_{i_1}\otimes\dots\otimes\varphi_{i_n}=\left(\sum_{j=1}^n\lambda_{i_j}\right)\varphi_{i_1}\otimes\dots\otimes\varphi_{i_n}\quad(i_j\ge0, 1\le j\le n).
\end{equation}
If  $A$ is a selfadjoint operator with continuous spectrum $\spec_c(A)$, then $d\Gamma(A)|_{\mathcal{H}^{(n)}}$ has a continuous spectrum given by
\begin{equation}
\spec_c\left(d\Gamma(A)|_{\mathcal{H}^{(n)}}\right)=\left\{\left.\sum_{j=1}^n\lambda_j\right|\,\lambda_j\in\spec_c(A)\text{ for }1\le j\le n\right\}.
\end{equation}
\end{proposition}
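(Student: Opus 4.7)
The plan is to exploit the defining formula $d\Gamma(A)|_{\mathcal{H}^{(n)}} = \sum_{k=1}^n \mathbb{1}\otimes\cdots\otimes A_{(k)}\otimes\cdots\otimes \mathbb{1}$, which expresses the restricted second quantization as a sum of $n$ pairwise commuting, essentially self-adjoint operators, each unitarily equivalent to $A$ in its $k$-th slot. The discrete case is then an immediate direct computation: applying the sum to $\varphi_{i_1}\otimes\cdots\otimes\varphi_{i_n}$, each summand produces the factor $\lambda_{i_k}$ in its corresponding slot, which totals $\sum_{k=1}^n \lambda_{i_k}$. Since $\{\varphi_{i_1}\otimes\cdots\otimes\varphi_{i_n}\mid i_j\ge 0\}$ is a Hilbert orthonormal basis of $\mathcal{H}^{(n)}$ by the standard tensor product construction, this exhausts the spectrum.

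For the continuous spectrum case I would proceed by the two-direction inclusion. For $\supseteq$, I would use approximate eigenvectors (Weyl sequences): for each $\lambda_j\in\spec_c(A)$, select a sequence of unit vectors $(\varphi_j^{(m)})_{m\ge 1}\subset\mathcal{D}(A)$ with $\|(A-\lambda_j)\varphi_j^{(m)}\|\to 0$ and $\varphi_j^{(m)}\rightharpoonup 0$ weakly (such sequences exist precisely because $\lambda_j$ belongs to the continuous spectrum). Then the tensor product $\Psi^{(m)}:=\varphi_1^{(m)}\otimes\cdots\otimes\varphi_n^{(m)}$ is a unit vector in $\mathcal{H}^{(n)}$ weakly tending to zero, and a direct estimate using the triangle inequality on each summand of $d\Gamma(A)|_{\mathcal{H}^{(n)}}$ gives
\begin{equation}
\left\|\left(d\Gamma(A)|_{\mathcal{H}^{(n)}}-\sum_{j=1}^n\lambda_j\right)\Psi^{(m)}\right\|\le \sum_{j=1}^n\|(A-\lambda_j)\varphi_j^{(m)}\|\longrightarrow 0,
\end{equation}
so $\sum_j\lambda_j$ lies in the spectrum and, being a weak limit of zero, in the continuous spectrum.

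For the reverse inclusion $\subseteq$ I would invoke the spectral theorem in its multiplication-operator form: up to unitary equivalence $A$ acts as multiplication by the coordinate $x$ on some $L^2(\mathbf{R},d\mu)$ with $\supp\mu=\spec_c(A)$, and since $A$ has no discrete spectrum $\mu$ is non-atomic. Under the induced unitary isomorphism $\mathcal{H}^{(n)}\cong L^2(\mathbf{R}^n,d\mu^{\otimes n})$, the operator $d\Gamma(A)|_{\mathcal{H}^{(n)}}$ becomes multiplication by the function $F(x_1,\dots,x_n):=\sum_{k=1}^n x_k$. Its spectrum is the essential range of $F$ with respect to $\mu^{\otimes n}$, which coincides with the Minkowski sum $\underbrace{\spec_c(A)+\cdots+\spec_c(A)}_{n\text{ times}}$. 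The absence of eigenvalues follows because any eigenvector $\psi$ at level $\lambda_0$ would have to be supported on the affine hyperplane $\{F=\lambda_0\}$, which is a $\mu^{\otimes n}$-null set by Fubini and the non-atomicity of $\mu$, forcing $\psi=0$.

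The routine estimates and the Weyl-sequence construction in the first half are standard; the genuine obstacle is the second half, namely promoting the pointwise identities for $d\Gamma(A)|_{\mathcal{H}^{(n)}}$ on tensors of eigenvectors to a global spectral statement when only continuous spectrum is present. Handling this requires the multiplication-operator realisation together with the non-atomicity argument above; once those tools are in place, the two inclusions close the proof.
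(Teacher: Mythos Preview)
The paper does not actually supply a proof of this proposition: it is preceded only by the sentence ``It is easy to prove that the spectrum of the second quantization can be inferred from the spectrum of the first'' and is followed immediately by the next subsection. There is therefore nothing to compare your proposal against; rather, you are filling in what the paper omits.

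Your argument is sound in outline. The discrete case is the expected one-line computation, and the tensor products of eigenvectors do form an orthonormal basis of $\mathcal{H}^{(n)}$. For the continuous case, the Weyl-sequence inclusion and the multiplication-operator realisation are the right tools, and the Fubini/non-atomicity argument for the absence of eigenvalues is correct. Two minor technical caveats are worth noting. First, the version of the spectral theorem you invoke (multiplication by the coordinate $x$ on a single $L^2(\mathbf{R},d\mu)$) presupposes a cyclic vector; in general one must pass to a direct sum of such models, or equivalently to multiplication by a measurable function $f$ on $L^2(M,d\nu)$, after which your argument goes through with $f^{-1}(\{c\})$ a null set because $A$ has no point spectrum. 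Second, the essential range of $F(x_1,\dots,x_n)=\sum_k x_k$ is in general the \emph{closure} of the $n$-fold Minkowski sum of $\spec_c(A)$, since spectra are closed but Minkowski sums of closed sets need not be; the proposition as stated in the paper elides this point, so it is not a defect of your proof.
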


\begin{defi}[\textbf{Segal Quantization}]
Let $f\in\mathcal{H}$ be fixed. For vectors in $\mathcal{H}^{(n)}$ of the form $\eta=\psi_1\otimes\psi_2\otimes\dots\otimes\psi_n$ we define a map $b^{-}(f):\mathcal{H}^{(n)}\rightarrow \mathcal{H}^{(n-1)}$ by
\begin{equation}
b^{-}(f)\eta:=(f,\psi_1)\psi_2\otimes\dots\otimes\psi_n.
\end{equation}
\noindent The expression $b^{-}(f)$ extends by linearity to a bounded operator on $\mathcal{F}(\mathcal{H})$. The operator $N:=d\Gamma(I)$ is termed \textbf{number operator} and
\begin{equation}
a^{-}(f):=\sqrt{N+1}\,b^{-}(f)
\end{equation}
is called the \textbf{annihilation operator} on $\mathcal{F}_s(\mathcal{H})$. Its adjoint, $a^{-}(f)^*$ is called the \textbf{creation operator}. Finally, the real linear (but not complex linear) operator
\begin{equation}
\Phi_S(f):=\frac{1}{\sqrt{2}}\left(a^{-}(f)+a^{-}(f)^*\right)
\end{equation}
is termed \textbf{Segal field operator}, and the map
\begin{equation}
\begin{split}
\Phi_S:\mathcal{H}&\rightarrow\mathcal{O}(\mathcal{F}_s(\mathcal{H}))\\
f&\mapsto\Phi_S(f)
\end{split}
\end{equation}
the \textbf{Segal quantization} over $\mathcal{H}$.
\end{defi}
\begin{theorem}\label{Segal}
Let $\mathcal{H}$ be a complex Hilbert space and $\Phi_S$ the corresponding Segal quantization. Then:
\begin{description}
\item[(a)] For each $f\in\mathcal{H}$ the operator $\Phi_S(f)$ is essentially selfadjoint on $F_0$.
\item[(b)] The vacuum $\Omega_0$ is in the domain of all finite products $\Phi_S(f_1)\Phi_S(f_2)\dots\Phi_S(f_n)$ and the linear span of $\left\{\Phi_S(f_1)\Phi_S(f_2)\dots\Phi_S(f_n)\Omega_0\,|\,f_i\in\mathcal{H},n\ge0\right\}$ is dense in $\mathcal{F}_s(\mathcal{H})$.
\item[(c)] For each $\psi_0\in F_0$ and $f,g\in\mathcal{H}$
    \begin{equation}
    \begin{split}
     &\Phi_S(f)\Phi_S(g)\psi-\Phi_S(g)\Phi_S(f)\psi=\imath\text{Im}(f,g)\psi\\
     &\exp\left(\imath\Phi_S(f+g)\right)=\exp\left(-\frac{\imath}{2}\text{Im}(f,g)\right)\exp\left(\imath\Phi_S(f)\right)\exp\left(\imath\Phi_S(g)\right).
    \end{split}
    \end{equation}

\item[(d)]If $f_n\rightarrow f$ in $\mathcal{H}$, then:
    \begin{equation}
      \begin{split}
      \Phi_S(f_n)\rightarrow \Phi_S(f)\\
      \exp\left(\imath\Phi_S(f_n)\right)\rightarrow \exp\left(\imath\Phi_S(f)\right)\\
      \end{split}
    \end{equation}
\item[(e)]For every unitary operator $U$ on $\mathcal{H}$, $\Gamma(U):\mathcal{D}(\overline{\Phi_S(f)})\rightarrow\mathcal{D}(\overline{\Phi_S(Uf)})$ and for $\psi\in\mathcal{D}(\overline{\Phi_S(Uf)})$ and for all $f\in\mathcal{H}$
    \begin{equation}
    \Gamma(U)\overline{\Phi_S(f)}\Gamma(U)^{-1}\psi=\overline{\Phi_S(Uf)}\psi.
    \end{equation}
\end{description}
\end{theorem}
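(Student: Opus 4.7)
The plan is to follow the standard Nelson-style argument for canonical quantization on the symmetric Fock space, working on the dense invariant core $F_0$ throughout, and deferring all analytic questions to Nelson's analytic vector theorem. First I would establish the basic estimates $\|a^-(f)\psi\| \le \sqrt{n}\,\|f\|\,\|\psi\|$ and $\|a^-(f)^*\psi\| \le \sqrt{n+1}\,\|f\|\,\|\psi\|$ on $\mathcal{H}^{(n)}_s$, so that $\Phi_S(f)$ maps $F_0$ into itself with $\|\Phi_S(f)\psi^{(n)}\| \le \sqrt{2(n+1)}\,\|f\|\,\|\psi^{(n)}\|$. Iterating this bound yields $\|\Phi_S(f)^k\psi^{(n)}\| \le (2\|f\|)^k\sqrt{(n+k)!/n!}\,\|\psi^{(n)}\|$, from which Stirling's formula shows every vector in $F_0$ is an analytic vector for $\Phi_S(f)$. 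Since $\Phi_S(f)$ is symmetric on the dense subspace $F_0$, Nelson's analytic vector theorem delivers (a).

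For (b), a direct computation gives $a^-(f)\Omega_0=0$ and $a^-(f)^*\Omega_0 = f \in \mathcal{H}^{(1)}_s$. Expanding $\Phi_S(f_1)\cdots\Phi_S(f_n)\Omega_0$ as a sum of $2^n$ monomials in the operators $a^-(f_j)$ and $a^-(f_j)^*$, the unique monomial that contributes at particle-number level $n$ is the one with all creation operators; it produces, up to a scalar, the symmetrised tensor $S_n(f_1\otimes\cdots\otimes f_n)$, while the remaining monomials land in $\bigoplus_{k<n}\mathcal{H}^{(k)}_s$. Symmetric tensors of the form $S_n(f_1\otimes\cdots\otimes f_n)$ span $\mathcal{H}^{(n)}_s$ by polarisation, so an induction on the particle number $n$ establishes density and hence cyclicity of the vacuum.

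For (c) the CCR $[a^-(f),a^-(g)^*]\psi=(f,g)\psi$ on $F_0$ is immediate from the definition of $a^-$, and substitution into $[\Phi_S(f),\Phi_S(g)]$ yields the commutator formula. For the exponentiated Weyl form I would define, on analytic vectors,
\begin{equation}
W(t) := \exp\!\left(\tfrac{\imath t^2}{2}\mathrm{Im}(f,g)\right)\exp(\imath t \Phi_S(f))\exp(\imath t\Phi_S(g))\exp(-\imath t\Phi_S(f+g)),
\end{equation}
differentiate in $t$, and use the scalar-valued commutator to verify $W'(t)=0$ on the dense set of analytic vectors, giving $W(1)=W(0)=\mathrm{Id}$. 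Part (d) reduces to the estimate $\|(\Phi_S(f)-\Phi_S(g))\psi\|\le\|f-g\|\,\|(N+1)^{1/2}\psi\|$ on $F_0$, which produces strong graph convergence of $\Phi_S(f_n)$ and, by the Trotter-Kato theorem, strong convergence of the corresponding unitary groups.

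Finally, for (e) I would verify directly on elementary symmetrised tensors that $\Gamma(U)b^-(f)\Gamma(U)^{-1}=b^-(Uf)$ using unitarity of $U$, observe that $\Gamma(U)$ commutes with the number operator $N$, and conclude $\Gamma(U)a^-(f)\Gamma(U)^{-1}=a^-(Uf)$ and analogously for the adjoint; closures then give the stated identity on $\mathcal{D}(\overline{\Phi_S(Uf)})$. The principal technical obstacle is the Weyl identity in (c): the other parts are either algebraic manipulations on $F_0$ or immediate consequences of Nelson's theorem, but justifying the Baker-Campbell-Hausdorff reasoning for unbounded self-adjoint operators requires careful control of the domain of $W(t)$ and the rigorous exchange of differentiation with the three unbounded exponentials, which in turn relies precisely on the analytic vector bounds established in the opening step.
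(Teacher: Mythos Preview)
Your sketch is correct and is precisely the standard Nelson analytic-vector argument; the paper, however, does not reprove this theorem at all but simply cites Theorem~X.41 of Reed--Simon \cite{RS75}. Since your outline is essentially the content of that Reed--Simon proof, there is no substantive divergence---you have supplied the details the paper chose to omit.
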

\begin{proof} See Theorem X.41 in \cite{RS75}.
\end{proof}

\subsubsection{Wick Products and Wiener-It\^{o}-Segal Isomorphism}

\begin{defi}[\textbf{Wick Products of Random Variables}]
Let $(\Omega,\mathcal{A},\mu)$ be a probability space and $f:\Omega\rightarrow\mathbf{R}$ a random variable with finite moments. Then, for $n\in\mathbf{N}_0$, the random variable $:f^n:$, termed \textbf{nth Wick power} of f is defined recursively by
\begin{equation}
\begin{split}
&:f^0:=1,\\
&\frac{\partial}{\partial f}:f^n:=n:f^{n-1}:\quad\text{ and }\quad\mathbb{E}^\mu[:f^n:]=0\quad\text{ for }n\ge1.
\end{split}
\end{equation}
Let $f_1,\dots,f_k:\Omega\rightarrow\mathbf{R}$ be random variables with finite moments. The Wick product $:f_1^{n_1}\dots f_k^{n_k}:$ is defined recursively in $n=n_1+\dots+n_k$ by
\begin{equation}
\begin{split}
&:f_1^0\dots f_k^0:=1,\\
&\frac{\partial}{\partial f_i}:f_1^{n_1}\dots f_k^{n_k}:=n_i:f_1^{n_1}\dots f_i^{n_i-1}\dots f_k^{n_k}:\quad\text{ and }\quad\mathbb{E}^\mu[:f_1^{n_1}\dots f_k^{n_k}:]=0\quad\text{ for }n\ge1.
\end{split}
\end{equation}
\end{defi}

\begin{defi}[\textbf{Wick Products of Segal Fields}]
Let $\mathcal{H}$ be a Hilbert space, and for any $f\in\mathcal{H}$ let $\Phi_S(f)$ be the Segal field on the bosonic Fock space $\mathcal{F}_s(\mathcal{H})$. Then, for $f,f_1,\dots,f_k\in\mathcal{H}$ the \textbf{Wick product} $:\Phi_S(f_1)\dots\Phi_S(f_k):$ is defined recursively by
\begin{equation}
\begin{split}
&:\Phi_S(f):=\Phi_S(f),\\
&:\Phi_S(f)\prod_{j=1}^k\Phi_S(f_j):=\Phi_S(f):\prod_{j=1}^k\Phi_S(f_j):-\frac{1}{2}\sum_{j=1}^k(f,f_j)_{\mathcal{H}}):\prod_{i\neq j}\Phi_S(f_j):
\end{split}
\end{equation}
\end{defi}

\begin{proposition}[\textbf{Wiener-It\^{o}-Segal Isomorphism}]\label{WIS}
If the probability measure $\mu$ on $\mathcal{S}^{\prime}(\mathbf{R}^N)$ is Gaussian, then the space $\mathcal{F}_s(\mathcal{H})$ is isomorphic to $L^2(\mathcal{S}^{\prime}(\mathbf{R}^N), d\mu)$ and the isomorphism $\theta_W:\mathcal{F}_s(\mathcal{H})\rightarrow L^2(\mathcal{S}^{\prime}(\mathbf{R}^N), d\mu)$, termed Wiener-It\^{o}-Segal isomorphism, satisfies the following properties:
\begin{enumerate}
\item[(i)]$\theta_W\Omega_0=1$,
\item[(ii)]$\theta_W\mathcal{H}_s^{(n)}=L^2_n(\mathcal{S}^{\prime}(\mathbf{R}^N), d\mu)$,
\item[(iii)]$\theta_W \Phi_S(f)\theta_W^{-1}\Phi=\Phi(f)$ for all $\Phi\in \mathcal{S}^{\prime}(\mathbf{R}^N)$ and $f\in \mathcal{S}(\mathbf{R}^N)$,
\end{enumerate}
where $\Omega_0=(1,0,0,\dots)\in\mathcal{F}_s(\mathcal{H})$ and $L^2_n(\mathcal{S}^{\prime}(\mathbf{R}^N), d\mu)$ is the closure of all linear combinations of Wick products of random variables over $\mathcal{S}^{\prime}(\mathbf{R}^N)$ up to order $n$.
\end{proposition}
\begin{proof} See Proposition 5.7 in \cite{BHL11}.
\end{proof}

\subsection{Osterwalder-Schrader Axioms}

Osterwalder and Schrader (see \cite{OS73}, \cite{OS73Bis}) utilized the Wick rotation technique to pass from the Minkowskian to the Euclidean space and formulate axioms equivalent to Wightman in terms of Euclidean Green functions. In \cite{OS75} they defined free Bose and Fermi fields and proved a Feynman-Ka\v{c} formula for boson-fermion models.\par
The dynamics of the quantized system satisfying Wightman axioms is given by the Schr\"odinger, or, equivalently by the heat equation,
where an $H$ unbounded, selfadjoint Hamilton operator $H$ on a physical Hilbert space $\mathcal{H}$ appears. The Hamilton operator can be extracted from the Osterwalder-Schrader axioms and ideally coincides with an operator obtained by a quantization procedure of the Hamiltonfunction in the classical description of the physical system.\par Following chapter 6 of \cite{GJ87} we introduce the
\begin{defi}[\textbf{Osterwalder-Schrader Axioms}] The primitive of a quantum field model is a Borel probability measure $d\mu$ on $\mathcal{S}^{\prime}(\mathbf{R}^N)$, whose inverse Fourier transform gives the generating functional
\begin{equation}
S(f):=\int_{\mathcal{S}^{\prime}(\mathbf{R}^N)}e^{\imath\Phi(f)}d\mu(\Phi),
\end{equation}
where $f\in\mathcal{S}(\mathbf{R}^N)$. Formally, we write $\Phi(f)=\int_{\mathbf{R}^N}\Phi(x)f(x)d^Nx$.
\begin{description}
\item[(OS0) Analyticity:] The functional $S(f)$ is entire analytic. For every finite set of test functions $f_j\in\mathcal{S}(\mathbf{R}^N)$, $j=1,\dots, n$ and complex numbers $z:=(z_1,z_2,\dots,z_n)\in\mathbf{C}^n$, the function
    \begin{equation}
    z\mapsto S\left(\sum_{j=1}^nz_jf_j\right)
    \end{equation}
\noindent is entire on $\mathbf{C}^n$.
\item[(OS1) Regularity:] For some $p\in[1,2]$, some constant $c$ and for all $f\in\mathcal{S}(\mathbf{R}^N)$
\begin{equation}
|S(f)|\le e^{c(\|f\|_{L^1(\mathbf{R}^N)}+\|f\|_{L^p(\mathbf{R}^N)}^p)}.
\end{equation}
\item[(OS2) Euclidean Invariance:] The functional $S(f)$ is invariant under Euclidean symmetries $E$ of $\mathbf{R}^N$. This means that for any translation, rotation and reflection, for all $f\in\mathcal{S}(\mathbf{R}^N)$
    \begin{equation}
     S(Ef)=S(f),
    \end{equation}
where $Ef(x):=f(E^{-1}(x))$.
\item[(OS3) Reflection Positivity:] Let
\begin{equation}
\mathcal{L}:=\left\{\psi\left|\;\psi(\Phi)=\sum_{j=1}^nc_je^{\Phi(f_j)}, c_j\in\mathbf{C}, f_j\in\mathcal{S}(\mathbf{R}^N), j=1,\dots,n\right.\right\}
\end{equation}
be the algebra of exponential functionals on tempered distributions and
\begin{equation}
\mathcal{L}_+:=\left\{\psi\left|\;\psi(\Phi)=\sum_{j=1}^nc_je^{\Phi(f_j)}, c_j\in\mathbf{C}, f_j\in\mathcal{S}(\mathbf{R}^N), \supp(f_j)\subset\{(t,x)\in\mathbf{R}^N\,|\;t>0\}\right.\right\}
\end{equation}
the subalgebra of exponential functionals whose defining functions are supported in the positive time half space. Euclidean transforms E on $\mathbf{R}^N$ act on $\mathcal{S}^{\prime}(\mathbf{R}^N)$ via
\begin{equation}
(E\psi)(\Phi):=\psi(E\Phi)\text{ and } E\Phi(f):=\Phi(Ef),
\end{equation}
for all $\psi\in\mathcal{L}$, $\Phi\in\mathcal{S}^{\prime}(\mathbf{R}^N)$, $f\in\mathcal{S}(\mathbf{R}^N)$.\par
We assume that the time reflection
\begin{equation}
(t,x)\mapsto\theta(t,x):=(-t,x)
\end{equation}
satisfies
\begin{equation}
\int_{\mathcal{S}^{\prime}(\mathbf{R}^N)}\theta\Psi(\Phi)\bar{\Psi}(\Phi)d\mu(\Phi)\ge0.
\end{equation}
for all $\Psi\in\mathcal{L}_+$.
\item[(OS4) Ergodicity:] The Euclidean time translation subgroup $\{T(t)\}_{t\ge0}$ acts ergodically on the measure space $(\mathcal{S}^{\prime}(\mathbf{R}^N), d\mu)$, i.e.
\begin{equation}
\lim_{t\mapsto+\infty}\frac{1}{t}\int_0^tT(s)\Psi(\Phi)T(s)^{-1}ds=\int_{\mathcal{S}^{\prime}(\mathbf{R}^N)}\Psi(\Phi)d\mu(\Phi),
\end{equation}
for all $\Psi\in L^1(\mathcal{S}^{\prime}(\mathbf{R}^N), d\mu)$.
\end{description}
\end{defi}
\noindent From the Osterwalder-Schrader axioms we can reconstruct quantum field theory as described by the Wightman axioms and derive the quantum mechanical dynamics. Table \ref{OS} depicts the formal scheme of this construction.
\begin{table}[!]
\begin{center}
\begin{tabular}{|l|l|}
  \hline
  &\\
  Path space  & $\mathcal{S}^{\prime}(\mathbf{R}^{N})$ \\
  &\\
  \hline
  &\\
  Configuration space  & $\mathcal{S}^{\prime}(\mathbf{R}^{N-1})$ \\
  &\\
  \hline
  &\\
  Measure on path space  & $d\mu$\\
  &\\
  \hline
  &\\
  Measure on configuration space  & $d\nu=d\mu|_{t:=0}$\\
  &\\
  \hline
  &\\
  Path space for quantum operators  & $\mathcal{E}:=L^2(\mathcal{S}^{\prime}(\mathbf{R}^{N}), d\mu)$\\
  &\\
  \hline
   &\\
  Physical Hilbert space  & $\mathcal{H}\cong L^2(\mathcal{S}^{\prime}(\mathbf{R}^{N-1}), d\nu)$\\
  &\\
  \hline
\end{tabular}
\end{center}
\caption{Quantum field theory from Osterwalder-Schrader axioms}\label{OS}
\end{table}
The proper definition of the quantum mechanical Hilbert space relies on the reflection positivity axiom. The exponential functionals $\mathcal{L}$ are dense in $\mathcal{E}:=L^2(\mathcal{S}^{\prime}(\mathbf{R}^{N}), d\mu)$. Let $\mathcal{E}_+$ be the closure of $\mathcal{L}_+$ in $\mathcal{E}$, termed as \textbf{positive time subspace} of $\mathcal{E}$, and define the bilinear form on $\mathcal{E}_+\times\mathcal{E}_+$
\begin{equation}\label{scalar}
b(\Psi, \Upsilon):=\int_{\mathcal{S}^{\prime}(\mathbf{R}^N)}\Psi(\Phi)\overline{\Upsilon(\Phi)}d\mu(\Phi).
\end{equation}
By $(OS3)$ the bilinear form $b$ is positive semidefinite. Let
\begin{equation}
\mathcal{N}:=\{\psi\in\mathcal{E}_+\,|\,b(\Psi,\Psi)=0\}
\end{equation}
\noindent the subspace of vectors for which the bilinear form degenerates, and define the quantum mechanical Hilbert space as
\begin{equation}
\mathcal{H}:=\overline{\mathcal{E}_+/\mathcal{N}},
\end{equation}
with hermitian scalar product defined by (\ref{scalar}) independently of the representative of the equivalence class. We denote the canonical embedding of the positive time subspace unto the quantum mechanical Hilbert space as
\begin{equation}
\begin{split}
\curlywedge:&\mathcal{E}_+\rightarrow\mathcal{H}\\
&\Psi\mapsto\Psi^{\curlywedge}:=\Psi+\mathcal{N},
\end{split}
\end{equation}
and transfer operators $S$ acting on $\mathcal{E}_+$ to operators $S^\curlywedge$ acting on $\mathcal{H}$ by
\begin{equation}
S^\curlywedge\Psi^{\curlywedge}:=(S\Psi)^{\curlywedge},
\end{equation}
which is well defined for all $\Psi\in\mathcal{E}_+$, if
\begin{equation}\label{condS}
S:\mathcal{D}(S)\cap\mathcal{E}_+\rightarrow\mathcal{E}_+\text{ and }S:\mathcal{D}(S)\cap\mathcal{N}\rightarrow\mathcal{N}.
\end{equation}
\begin{theorem}[Reconstruction of quantum mechanics] \label{rec}
If the probability measure $\mu$ on $\mathcal{S}^{\prime}(\mathbf{R}^N)$ satisfies the reflection and time translation invariance axiom (OS2),
and the reflection positivity axiom (OS3), then for all $t\ge0$ the time translation $T(t)$ satisfies (\ref{condS}) and
\begin{equation}
T(t)^\curlywedge=e^{-tH},
\end{equation}
where $H=H^*\ge0$ is a (possibly unbounded) selfadjoint operator on $\mathcal{H}$ with ground state $\Omega_0:=1^\curlywedge$, i.e. $H\Omega_0=0$.
\end{theorem}
\begin{proof} See Theorem 6.13 in \cite{GJ87}.
\end{proof}
\noindent The reflection positivity axiom is not easy to verify. The following proposition provides a useful criterion.
\begin{proposition}\label{posprop}
The probability measure $\mu$ on $\mathcal{S}^{\prime}(\mathbf{R}^N)$ satisfies
 the reflection positivity axiom (OS3) if and only if the matrix $M:=[S(\theta(f_i)-f_j)]$ has positive eigenvalues for all choices
 of $(f_i)_{i=1,\dots,n}\subset\mathcal{S}(\mathbf{R}^N)$ with support in the time positive half space.
\end{proposition}
\begin{proof}
See Corollary 3.4.4 in \cite{GJ87}.
\end{proof}
\noindent Gaussian measures play a prominent role in quantum field theory, because they originate free fields.
\begin{defi} A linear operator $C$ defined on $\mathbf{R}^N$ satisfies the reflection positivity property if and only if
\begin{equation}
(\theta f,Cf)_{L^2(\mathbf{R}^N, d^Nx)}\ge0,
\end{equation}
for all $f\in\mathbf{R}^N$ supported at positive times.
\end{defi}
\begin{theorem}
A Gaussian measure on the space of tempered distributions satisfies reflection positivity if and only if its covariance operator does.
\end{theorem}
\begin{proof}  See Theorem 6.22 in \cite{GJ87}.
\end{proof}
\begin{proposition}\label{RP}
The covariance operator $C:=(-\Delta_{\mathbf{R}^N}+m^2)^{-1}$ is reflection positive for all $m^2>0$.
\end{proposition}
\begin{proof}  See Proposition 6.2.5 in \cite{GJ87}.
\end{proof}

\noindent Finally, we highlight the equivalence of the Osterwalder-Schrader axioms with Wightman's ones.
\begin{theorem}
Wightman's axioms (W1)-(W8) are equivalent to Osterwalder-Scharder axioms (OS0)-(OS4).
\end{theorem}
\begin{proof}
See Theorem II.12 and Theorem II.13 in \cite{Sim15} or Theorem 6.1.5 and Chapter 19 in \cite{GJ87}.
\end{proof}
\section{Quantization of Yang-Mills Equations and Positive Mass Gap}\label{QYM}
There are several methods of designing a quantum theory for non-Abelian gauge fields. The Hamiltonian formulation is the approach used in the original work by Yang-Mills (\cite{MY54}), which was later abandoned in favour of an alternative method based on Feynman path integrals (\cite{FP67}). When it became clear that the Faddeev-Popov method must be incomplete beyond perturbation theory, Hamiltonian formulation enjoyed a partial renaissance. More recent, didactically accessible, examples of the Hamiltonian approach in the physical literature can be found in \cite{Sch08}.\par
In the first section of this chapter we construct a quantized Yang-Mills theory in dimension $3+1$ and in the second we compute spectral lower bounds for the Hamilton operator. In the third we summarize our findings and prove the main result, Theorem \ref{CMIThm}.

%\begin{defi}
%\begin{equation}
%\begin{split}
%&\mathcal{A}:=\overline{\left\{A:\mathbf{R}^4\rightarrow\mathbf{C}^{K \times 3}\left|\, \mathbf{A}(t,\cdot)\in C^{\infty}(\mathbf{R}^3,\mathbf{C}^{K \times 3})\cap L^2(\mathbf{R}^3,\mathbf{C}^{K \times 3}),\,{\mathbf{A}}^{\parallel}(t,\cdot)=0,\, \forall t\in\mathbf{R}\right.\right\}}^{H^1(\mathbf{R}^3,\mathbf{C}^{K \times 3})}\\
%&\mathcal{F}(\mathbf{\mathcal{A}},\mathbf{C}):=\left\{\Psi:\mathcal{A}\longrightarrow\mathbf{C}\right\}\\
%&L^2(\mathcal{A},\mathbf{C}):=\left\{\Psi\in\mathcal{F}(\mathcal{A},\mathbf{C})\left|\;\int_{\mathcal{A}}\mathcal{D}(\mathbf{A})\,|\Psi[\mathbf{A}]|^2<+\infty\right.\right\}\\
%&\mathcal{S}(\mathcal{A},\mathbf{C}):=\left\{\Psi\in C^{\infty}(\mathcal{A},\mathbf{C})\left|\;\right.\right\}\\
%&\mathcal{S}^{\prime}(\mathcal{A},\mathbf{C}):=\left\{\Psi\in\mathcal{F}(\mathcal{A},\mathbf{C})\left|\;\Psi \text{ linear and continuous}\right.\right\}\\
%&H^k(\mathcal{A},\mathbf{C}):=\left\{\Psi\in\mathcal{S}^{\prime}(\mathcal{A},\mathbf{C})\left|\;\delta^{\alpha}\Psi\in L^2(\mathcal{A},\mathbf{C})\text{ for all }|\alpha|\le k\right.\right\}
%\end{split}
%\end{equation}
%\end{defi}

\subsection{Quantization}\label{quant}
In the Yang-Mills $3+1$ dimensional set up,  in order to account for functionals on transversal fields as required by the Coulomb gauge, we introduce the configuration space $\mathcal{S}^{\prime}_{\bot}(\mathbf{R}^4,\mathbf{C}^{K \times 3})$ and the path space $\mathcal{S}^{\prime}_{\bot}(\mathbf{R}^3,\mathbf{C}^{K \times 3})$. These are the duals in the sense of nuclear spaces of the test functions satisfying the transversal condition:
\begin{equation}
\begin{split}
L^2_{\bot}(\mathbf{R}^3,\mathbf{C}^{K \times 3},d^3x)&:=\{\left.\mathbf{A}\in L^2(\mathbf{R}^3,\mathbf{C}^{K \times 3},d^3x)\right|\,\mathbf{A}^{\parallel}=0\},\\
\mathcal{S}_{\bot}(\mathbf{R}^3,\mathbf{C}^{K \times 3})&:=\mathcal{S}(\mathbf{R}^3,\mathbf{C}^{K \times 3})\cap L^2_{\bot}(\mathbf{R}^3,\mathbf{C}^{K \times 3},d^3x)\\
\mathcal{S}_{\bot}(\mathbf{R}^4,\mathbf{C}^{K \times 3})&:=\{f\in\mathcal{S}(\mathbf{R}^4,\mathbf{C}^{K \times 3})\left|\;f(t,\cdot)\in L^2_{\bot}(\mathbf{R}^3,\mathbf{C}^{K \times 3},d^3x)\text{ for all }t\in\mathbf{R}\right.\}
\end{split}
\end{equation}
Note that we do not need to bother about the time component of the connection, because it vanishes in the Coulomb gauge. The tempered distributions are defined
\begin{equation}
\begin{split}
\mathcal{S}_{\bot}^{\prime}(\mathbf{R}^3,\mathbf{C}^{K \times 3})&:=\left\{\mathbf{A}:\mathcal{S}_{\bot}(\mathbf{R}^3,\mathbf{C}^{K \times 3})\rightarrow \mathbf{C}^{K \times 3}\text{ linear and continuous}\right\}\\
\mathcal{S}_{\bot}^{\prime}(\mathbf{R}^4,\mathbf{C}^{K \times 3})&:=\left\{\mathbf{A}:\mathcal{S}_{\bot}(\mathbf{R}^3,\mathbf{C}^{K \times 4})\rightarrow \mathbf{C}^{K \times 3}\text{ linear and continuous}\right\},
\end{split}
\end{equation}
and $\mathbf{A}\in\mathcal{S}_{\bot}^{\prime}(\mathbf{R}^4,\mathbf{C}^{K \times 3})$ is called \textbf{regular}
if there exists $\mathbf{a}=\mathbf{a}(t,x)\in L^2_{\text{loc}}(\mathbf{R}^4,\mathbf{C}^{K\times 3})$ such that
for all $f\in\mathcal{S}_{\bot}(\mathbf{R}^4,\mathbf{C}^{K \times 3})$
\begin{equation}
\mathbf{A}(f)=\int_{\mathbf{R}^4}d^4(t,x)a(x).f(x),
\end{equation}
where the dot denotes the pointwise multiplication.\par
We define the Hilbert space $\mathcal{E}:=L^2(\mathcal{S}^{\prime}_{\bot}(\mathbf{R}^4,\mathbf{C}^{K \times 3}), d\mu)$
and the physical Hilbert space as $\mathcal{H}:=L^2(\mathcal{S}^{\prime}_{\bot}(\mathbf{R}^3,\mathbf{C}^{K \times 3}), d\nu)$
for appropriate probability measures $\mu$ and $\nu$.\\
We introduce the Hamilton operator originated by the quantization of the Hamiltonian formulation of Yang-Mills equations.
This operator is the infinitesimal generator of a time inhomogeneous It\^{o}'s diffusion, whose probability density solves the heat kernel equation.
We construct a probability measure on the tempered distributions using this It\^{o}'s process as integrator and utilizing the Feynman-Ka\v{c} formula.
We prove that the necessary Osterwalder-Schrader axioms for the Hamilton operator to be selfadjoint on the probability space of the time zero
 tempered distributions are fulfilled. Then, we verify that the Wightman axioms are satisfied.\par
When we try to introduce the canonical quantization for the Hamilton function $H$ in (\ref{classicHamiltonFunction}),
we face the problem for $H_{II}$ and $V$ that the classical fields $\mathbf{A}(t,x)$ cannot be directly interpreted
as multiplication operators in $L^2(\mathcal{S}^{\prime}_{\bot}(\mathbf{R}^3,\mathbf{C}^{K \times 3}), d\nu)$,
because the multiplication of distributions cannot be properly defined. To circumvent this issue, following the idea expressed
f.i. in \cite{Sim05} (page 257) and applied to Nelson's model in \cite{BHL11} (page 297) in the case of ultraviolet divergence,
we introduce a cutoff test function to regularize fields.
\begin{defi}[\textbf{Regularization and Ultraviolet Cutoff}]\label{moll} Let us consider a test function $\varphi\in\mathcal{S}(\mathbf{R}^4,\mathbf{R}^1)$ such that
$\varphi_t(x):=\varphi(t,x)\in\mathcal{S}(\mathbf{R}^3,\mathbf{R}^1)$ satisfies for all $t$
\begin{equation}
\begin{split}
&\varphi_t(x)>0\text{  for all }x\in\mathbf{R}^3\\
&\int_{\mathbf{R}^3}\varphi_t(x)=1\\
&\supp(\varphi_t)\subset\subset \mathbf{R}^3.
\end{split}
\end{equation}
The test function $\varphi^{\Lambda}\in\mathcal{S}(\mathbf{R}^4,\mathbf{R}^1)$ defined as
$\varphi^{\Lambda}(t,x):=\Lambda\varphi_t(\Lambda x)$
is called \textbf{mollifier} for the ultraviolet cutoff level $\Lambda\ge0$.
\end{defi}
\noindent Note that for all $t\in\mathbf{R}$ in the limit we have $\mathcal{S}^{\prime}-\lim_{\Lambda\rightarrow+\infty}\varphi_t^{\Lambda}=\delta\in\mathcal{S}^{\prime}(\mathbf{R}^3,\mathbf{R}^1)$
and that $\mathbf{A}(\varphi^{\Lambda}_t(\cdot-x)I^{K\times 3})$ is a polynomially bounded function in $x\in\mathbf{R}^3$. By introducing the notation
\begin{equation}
 \mathbf{A}(\varphi^{\Lambda}_t(\cdot-x)) :=\mathbf{A}(\varphi^{\Lambda}_t(\cdot-x)I^{K\times 3}),
\end{equation}
 Theorem \ref{CHam} can be now quantized as follows.
\begin{proposition}\label{quantizationprop}
Let $H=H(A,E)$ be the Hamilton function of the Hamilton equations equivalent to the Yang-Mills equations as in Theorem \ref{CHam} for a simple Lie-group
as structure group. Let us consider a $\Lambda\ge0$ and the mollifier function
$\varphi^{\Lambda}\in\mathcal{S}(\mathbf{R}^4,\mathbf{R}^1)$.
For a probability measure $\nu$ on $\mathcal{S}^{\prime}_{\bot}(\mathbf{R}^3,\mathbf{C}^{K \times 3})$ the canonical
quantization of the position variable $A$, of the momentum variable $E$  and of the Hamilton function $H$ means the following substitution:
\begin{equation}\label{quantization}
\begin{split}
\mathbf{A} \in C^{\infty}(\mathbf{R}^3,\mathbf{C}^{K \times 3})&\longrightarrow \mathbf{A}(\varphi^{\Lambda}_t(\cdot-\cdot))\in\mathcal{O}(L^2(\mathcal{S}^{\prime}_{\bot}(\mathbf{R}^3,\mathbf{C}^{K \times 3}),\mathbf{C}^{K \times 3},d\nu))\\
\mathbf{E} \in C^{\infty}(\mathbf{R}^3,\mathbf{C}^{K \times 3})&\longrightarrow \frac{1}{\imath}\frac{\delta}{\delta \mathbf{A}(\varphi^{\Lambda}_t(\cdot-\cdot))}\in\mathcal{O}(L^2(\mathcal{S}^{\prime}_{\bot}(\mathbf{R}^3,\mathbf{C}^{K \times 3}),\mathbf{C}^{K \times 3},d\nu))\\
H \in C^{\infty}(\mathbf{R}^{2(K\times 3)},\mathbf{R})&\longrightarrow H^{\Lambda}:=H\left(\mathbf{A}(\varphi^{\Lambda}_t(\cdot-\cdot)),\frac{1}{\imath}\frac{\delta}{\delta \mathbf{A}(\varphi^{\Lambda}_t(\cdot-\cdot))}\right)\in\mathcal{O}(L^2(\mathcal{S}^{\prime}_{\bot}(\mathbf{R}^3,\mathbf{C}^{K \times 3}),\mathbf{C},d\nu)),\\
\end{split}
\end{equation}
\noindent where $\mathcal{O}(\cdot)$ denotes the set of all linear operators over the corresponding underlying vector space.
The cutoff Hamilton operator $H^{\Lambda}$ for the quantized Yang-Mills equations reads
\begin{equation}
H^{\Lambda,g}=H_I^{\Lambda}+H_{II}^{\Lambda,g}+V^{\Lambda,g}-V_0^{\Lambda,g},
\end{equation}
where
\begin{equation}\label{H-V}
\begin{split}
&(H_I^{\Lambda}\Psi)(\mathbf{A})=-\frac{1}{2}\int_{\mathbf{R}^3}d^3x\left[\frac{\delta}{\delta A_i^a(\varphi^{\Lambda}_t(\cdot-x))}\right]^2\Psi(\mathbf{A})\\
& \\
&(H_{II}^{\Lambda,g}\Psi)(\mathbf{A})=-\frac{g^2}{2}\int_{\mathbf{R}^3}d^3x\left[\int_{\mathbf{R}^3}d^3y\partial_iG^{a,b}(\mathbf{A}(\varphi^{\Lambda}_t(\cdot-\cdot));x,y)\varepsilon^{b,c,d}A_k^d(\varphi^{\Lambda}_t(\cdot-y))\frac{\delta}{\delta A_k^c(\varphi^{\Lambda}_t(\cdot-y))}\right]^2\Psi(\mathbf{A})\\
& \\
&(V^{\Lambda,g}\Psi)(\mathbf{A})=\int_{\mathbf{R}^3}d^3x\,V^{\Lambda,g}(t,x,\mathbf{A})\Psi(\mathbf{A}),\\
&\\
&V^{\Lambda,g}(t,x,\mathbf{A}):=\frac{1}{16}\varepsilon_i^{j,k}\varepsilon_i^{p,q}\left\{[\partial_jA^a_k(\varphi^{\Lambda}_t(\cdot-x))-\partial_kA_j^a(\varphi^{\Lambda}_t(\cdot-x))+\right.
g\varepsilon^{a,b,c}A_j^b(\varphi^{\Lambda}_t(\cdot-x))A_k^c(\varphi^{\Lambda}_t(\cdot-x))]\cdot\\
&\qquad\qquad\qquad\cdot[\partial_pA^a_q(\varphi^{\Lambda}_t(\cdot-x))-\partial_qA_p^a(\varphi^{\Lambda}_t(\cdot-x))
\left.+g\varepsilon^{a,b,c}A_p^b(\varphi^{\Lambda}_t(\cdot-x))A_q^c(\varphi^{\Lambda}_t(\cdot-x))]\right\},\\
&\\
&V^{\Lambda,g}_0:\text{ a real constant which will be chosen later},
\end{split}
\end{equation}
with the domain of definition
\begin{equation}
\mathcal{D}(H^{\Lambda,g}):=\left\{\Psi\in L^2(\mathcal{S}^{\prime}_{\bot}(\mathbf{R}^3,\mathbf{C}^{K \times 3}),\mathbf{C},d\nu)\left|\,H^{\Lambda}\Psi\in L^2(\mathcal{S}^{\prime}_{\bot}(\mathbf{R}^3,\mathbf{C}^{K \times 3}),\mathbf{C},d\nu)\right.\right\}.
\end{equation}
Note that the time $t$ is considered as a parameter in the expressions (\ref{quantization}) and (\ref{H-V}).
\end{proposition}
\begin{rem}
The first dot in $\partial_iG^{a,b}(\mathbf{A}(\varphi^{\Lambda}_t(\cdot-\cdot))$ in the expression for $H_{II}^{\Lambda,g}$ in (\ref{H-V}) refers to the application of the distribution $\mathbf{A}$ on the test function
i.e. the integration variable if $\mathbf{A}$ is regular. The second is the generic variable sign, since the modified Green function $G$ is a functional of a function in the first argument,
as explained in Proposition \ref{ModGreen}.
\end{rem}
\begin{proof}[Proof of Proposition \ref{quantizationprop}]
It is a straightforward consequence of Corollary \ref{corClassicH} where we introduce the quantization specified by (\ref{quantization}).
%The ground state of $H_I^{\Lambda}+H_{II}^{\Lambda,g}$ is the constant functional $\Omega_0(\mathbf{A})\equiv1$, which is an eigenvector for the eigenvalue $0$.
%The ground state of the non-negative selfajoint operator $V^{\Lambda,g}$ is $\Omega_0$ as well, since
%\begin{equation}
%\inf_{\Psi\in\mathcal{D}(V^{\Lambda,g})\setminus\{0\}}\frac{(V^{\Lambda,g}\Psi,\Psi)}{(\Psi,\Psi)}=0,
%\end{equation}
%because
%\begin{equation}
%\begin{split}
%(V^{\Lambda,g}\Omega_0,\Omega_0)
%&=\int_{\mathcal{S}_{\bot}^{\prime}(\mathbf{R}^3,\mathbf{C}^{K \times 3})}\left(\int_{\mathbf{R}^3}d^3x\,V^{\Lambda,g}(t,x,\mathbf{A})1\cdot1\right)d\nu^{\Lambda,g}(\mathbf{A})=\\
%&=\int_{\mathbf{R}^3}d^3x\,\mathbb{E}^{\nu^{\Lambda,g}}\left[V^{\Lambda,g}(t,x)\right]
%=0
%\end{split}
%\end{equation}
%by the choice (\ref{center}).
 %Hence, we have to show that $\Omega_0$ is an eigenvector of $H^{\Lambda, g}$ with multiplicity $1$, which follows from Theorem 3.3.2 and 3.3.3 in \cite{GJ87},
%because $A^{\Lambda,g}:=e^{-tH^{\Lambda, g}}$ has a strictly positive kernel, being $H^{\Lambda,g}=H_I^{\Lambda}+H_{II}^{\Lambda,g}+V^{\Lambda,g}$ self-adjoint
%and $V^{\Lambda,g}$ bounded from below by $0$ by construction.
%We conclude that $\Omega_0$ is the eigenvector of $H^{\Lambda,g}$ for the eigenvalue $0$.
\end{proof}
\begin{rem}
Later, we will prove that $\Omega_0\in\mathcal{D}(H^{\Lambda,g})$ is the \textbf{ground state} of $H^{\Lambda,g}$,
which is an eigenvector for the eigenvalue $0$ with simple multiplicity, unique up to multiplication with a constant.
\end{rem}
\begin{rem}
The derivative
\begin{equation}
\frac{\delta}{\delta A_i^a(\varphi^{\Lambda}_t(\cdot-x))}\Psi(\mathbf{A})=\left.\frac{d}{d\varepsilon}\right|_{\varepsilon:=0}\Psi(\mathbf{A}+\varepsilon\varphi^{\Lambda}_t(\cdot-x)))
\longrightarrow\frac{\delta}{\delta A_i^a(t,x)}\Psi(\mathbf{A})\;(\Lambda\rightarrow+\infty)\end{equation}
 in line with fact that $A_i^a(\varphi^{\Lambda}_t(\cdot-x))$ tends to $A_i^a(t,x)$, because $\mathcal{S}^{\prime}-\lim_{\Lambda\rightarrow+\infty}\varphi^{\Lambda}_t(\cdot-x))=\delta(\cdot-x)$
 for all $t$.
 \end{rem}

\begin{rem} The operator $H_I^{\Lambda}$  tends for $\Lambda\rightarrow +\infty$ to the Laplace operator in infinite dimensions for functionals
of the potential fields. The operator $V^{\Lambda}$ is a multiplication operator corresponding to the fibrewise multiplication with the square
of the connection curvature. Both operators $H_I^{\Lambda}$ and $V^{\Lambda}$ do not vanish if $g=0$ and do not contribute to the existence
of a mass gap. The operator $H_{II}^{\Lambda}$ vanishes if $g=0$, f.i. when $G$ is an abelian groups, and, as we will see,
is responsible for the existence of a mass gap.
\end{rem}

\begin{rem}
In the physical literature (see f.i. \cite{Sch08}) the operator $C=C(\mathbf{A};x,y)$
\begin{equation}
C^{a,b}(\mathbf{A};x,y):=-\int_{\mathbf{R}^3}d^3\bar{y}G^{a,c}(\mathbf{A};x,\bar{y})\Delta G^{c,b}(\mathbf{A};y,\bar{y})
\end{equation}
is termed \textit{Coulomb operator} and $G=G(\mathbf{A};x,y)$ is also called the \textit{Faddeev-Popov operator}. Note that they make sense only after the substitution
\begin{equation}
\mathbf{A}(t,x)\rightarrow\mathbf{A}(\varphi^{\Lambda}_t(\cdot-x)),
\end{equation}
and passing to the limit $\Lambda\rightarrow+\infty$.
\end{rem}
\noindent A direct computation shows
\begin{proposition}[\textbf{Canonical Commutation Relations}]
The operators $Q_i^j(x)$ and $P_i^j(x)$ defined as
\begin{equation}
\begin{split}
Q_i^a(x)\Psi(\mathbf{A})&:=A_i ^a(\varphi^{\Lambda}_t(x-\cdot))\Psi(\mathbf{A})\\
P_i^a(x)\Psi(\mathbf{A})&:=\frac{1}{\imath}\frac{\delta}{\delta A_i ^a(\varphi^{\Lambda}_t(x-\cdot))}\Psi(\mathbf{A}),
\end{split}
\end{equation}
on the appropriate domains, are in $\mathcal{O}(L^2(\mathcal{S}^{\prime}_{\bot}(\mathbf{R}^3,\mathbf{C}^{K \times 3}),d\nu^{\Lambda,g}))$
for all $i,a,x$ and all measures $\nu^{\Lambda, g}$. Their commutators satisfy the equations
\begin{equation}
[Q_i^a(x),Q_j^b(y)]=0\qquad[P_i^a(x),P_j^b(y)]=0\qquad[P_i^a(x),Q_j^b(y)]=\frac{1}{\imath}\delta^{a,b}\delta_{i,j}\delta_{x,y},
\end{equation}
for all $i,j,a,b,x,y$.
\end{proposition}

\subsection{Construction of a Complete Set of Generalized Eigenvectors}
\begin{theorem}\label{pgs} For $\Lambda\ge0$ big enough there exists a probability measure $\nu^{\Lambda,g}$ such that $H^{\Lambda,g}$ is a selfadjoint operator on
$L^2(\mathcal{S}^{\prime}_{\bot}(\mathbf{R}^3,\mathbf{C}^{K \times 3}),\mathbf{C},d\nu^{\Lambda,g})$ for all real constants $V_0^{\Lambda,g}$
There exists a \textbf{ground state} $\Omega_0^{\Lambda,g}\in\mathcal{D}(\mathcal{H}^{\Lambda,g})$ of $H^{\Lambda,g}$ for the eigenvalue $0$
for one choice of the real constant $V_0^{\Lambda,g}$:
\begin{equation}\label{gs}
H^{\Lambda,g}\Omega_0^{\Lambda,g}=0.
\end{equation}
The ground state $\Omega_0^{\Lambda,g}$ is an eigenvector with simple multiplicity, and hence it is unique up to multiplication with a constant.
\end{theorem}
The proof of Theorem \ref{pgs} is an elaborated functional analytic construction of a probability measure making
the Hamilton operator selfadjoint and with a unique ground state.\par
On a finite dimensional vector space an operator possessing a basis of eigenvectors for real eigenvalues is selfadjoint only with respect
to the scalar product which makes that basis orthonormal. For an infinite  space the situation is similar but more complicated by the presence of generalized
eigenvalues which are not proper eigenvalues, i.e. by the continuous spectrum.
\begin{proposition}\label{generalized_ev}
Let $\nu_0$ be the standard Gaussian probability on $\mathcal{S}_{\bot}^{\prime}(\mathbf{R}^3,\mathbf{C}^{K \times 3})$. In the notation of Example \ref{ExKT}
\begin{equation}
\begin{split}
&\mathcal{E}(L^2(\mathcal{S}^{\prime}_{\bot}(\mathbf{R}^3,\mathbf{C}^{K \times 3}),\mathbf{C},d\nu_0)):=\left\{\varphi\in L^2(\mathcal{S}^{\prime}_{\bot}(\mathbf{R}^3,\mathbf{C}^{K \times 3}),\mathbf{C},d\nu_0)|\,\|\varphi\|_k<+\infty\right\}\\
&\mathcal{E}^{\prime}(L^2(\mathcal{S}^{\prime}_{\bot}(\mathbf{R}^3,\mathbf{C}^{K \times 3}),\mathbf{C},d\nu_0)): \text{ dual space of }(\mathcal{S}_{\bot}(\mathbf{R}^3,\mathbf{C}^{K \times 3},\mathbf{C},d\nu_0)).
\end{split}
\end{equation}
Then, we have a rigged Hilbert space
\begin{equation}\label{riggedE}
\mathcal{E}(L^2(\mathcal{S}^{\prime}_{\bot}(\mathbf{R}^3,\mathbf{C}^{K \times 3}),\mathbf{C},d\nu_0))\subset L^2(\mathcal{S}^{\prime}_{\bot}(\mathbf{R}^3,\mathbf{C}^{K \times 3}),\mathbf{C},d\nu_0)\subset\mathcal{E}^{\prime}(L^2(\mathcal{S}^{\prime}_{\bot}(\mathbf{R}^3,\mathbf{C}^{K \times 3}),\mathbf{C},d\nu_0))
\end{equation}
and the operators $H^{\Lambda}_I$, $H^{\Lambda, g}_{II}$, $V^{\Lambda, g}$ have a complete set of generalized eigenvectors in $\mathcal{E}^{\prime}(L^2(\mathcal{S}^{\prime}_{\bot}(\mathbf{R}^3,\mathbf{C}^{K \times 3}),\mathbf{C},d\nu_0))$
for real non-negative generalized eigenvalues. The operator $H^{\Lambda, g}$ is selfadjoint on $L^2(\mathcal{S}^{\prime}_{\bot}(\mathbf{R}^3,\mathbf{C}^{K \times 3}),\mathbf{C},d\nu_0)$ with a non negative spectrum for $V_0^{\Lambda,g}$ sufficiently small
and $\Lambda$ sufficiently big.
\end{proposition}
\noindent To prove this proposition we need to prove some intermediate Lemmata beforehand.

\begin{lemma}\label{DirichletB}
Let $R>0$ be a positive constant and let
\begin{equation}
D:=\sum_{k=1}^Nf_k(A)\partial_{A_k}
\end{equation}
be a first order PDO on $\mathbf{R}^N$ for given functions $f_1,f_2,\dots,f_N$ on $\mathbf{R}^N$. If there exists a diffeomeorphism $B$ mapping some compact subset of $\mathbf{R}^N$ (in the "A"-space) to $[\frac{R}{2},+\frac{R}{2}]^N$ (in the "B"-space), such that
\begin{equation}\label{gjs}
g_j(B):=\sum_{k=1}^Nf_k(A)\partial_{A_k}B_j
\end{equation}
depends only on $B_j$, so that $g_j(B)=g_j(B_j)$, then the Dirichlet eigenvalues of $D^2$ on $B^{-1}([\frac{R}{2},+\frac{R}{2}]^N)$ are
\begin{equation}\label{eigB}
-\sum_{j=1}^N\frac{\pi^2k_j^2}{\left[\int_{-\frac{R}{2}}^{+\frac{R}{2}}dB_j\,g_j(B_j)^{-1}\right]^2},
\end{equation}
where $k_j\in\mathbf{Z}^*$ for $j=1,\dots,N$, provided the integrals in the denominators of (\ref{eigB}) exists.
\end{lemma}
\noindent Lemma \ref{DirichletB} is proved by a direct computation utilizing second order ODE.
\begin{lemma}\label{Diffeo}
Let us assume that for all $k=1,\dots,N$
\begin{equation}
\int_{-\infty}^{+\infty}dA_k\,f_k(A)^{-1}<+\infty
\end{equation}
holds uniformly in $A$. Then, the function $B:\mathbf{R}^N\rightarrow\mathbf{R}^N$ defined as
\begin{equation}\label{diffeoB}
B_j:=\Phi^{-1}\left(L_j\left[\int_{-\infty}^{A_j}d\bar{A}_jf_j^{-1}(A_1,\dots,\bar{A}_j,\dots,A_N)\;+\;K_j(A_1,\dots,A_{j-1},A_{j+1},\dots,A_N)\right]\right),
\end{equation}
\noindent where
\begin{equation}\label{defLK}
\begin{split}
&K_j(A_1,\dots,A_{j-1},A_{j+1},\dots,A_N):=-\inf_{A_j}\int_{-\infty}^{A_j}d\bar{A}_jf_j^{-1}(A_1,\dots,\bar{A}_j,\dots,A_N)>-\infty\\
&L_j:=\left[\sup_{A}\left[\int_{-\infty}^{A_j}d\bar{A}_jf_j^{-1}(A_1,\dots,\bar{A}_j,\dots,A_N)+K_j(A_1,\dots,A_{j-1},A_{j+1},\dots,A_N)\right]\right]^{-1}<+\infty\\
&\Phi(v):=\frac{1}{\sqrt{\pi}}\int_{-\infty}^{v}du\,e^{-u^2},
\end{split}
\end{equation}
is a diffeomorphism satisfying the assumptions of Lemma \ref{DirichletB} for any constant $R>0$, and the functions $g_j$ read
\begin{equation}
g_j(B_j)=\sqrt{\pi}\left(\sum_{k=1}^NL_{k}\right)e^{B_j^2}.
\end{equation}
\end{lemma}
\begin{proof}
By definition (\ref{gjs}), if
\begin{equation}
f_k\partial_{A_k}B_j=C_{k,j}(B_j)
\end{equation}
for a function $C_{k,j}$ of one variable, then the function $g_j$ explicitly depends on the variable $B_j$ only. This leads to the differential equation
\begin{equation}
\frac{dB_j}{C_{j,k}(B_j)}=\frac{dA_k}{f_k(A)},
\end{equation}
which is fulfilled if
\begin{equation}
\int_{-\infty}^{B_j}\frac{d\bar{B}_j}{C_{j,k}(\bar{B}_j)}=\int_{-\infty}^{A_k}\frac{d\bar{A}_k}{f_k(A_1,\dots,A_{k-1},\bar{A}_k,A_k,\dots,A_N)}+K_k(A_1,\dots,A_{k-1},A_{k+1},\dots,A_N),
\end{equation}
where $K_k$ is a function of $A$ not depending on $A_k$. The choice
\begin{equation}
C_{j,k}(u):=\sqrt{\pi}L_{j}e^{u^2}
\end{equation}
for $L_j$ and $K_j$ defined in (\ref{defLK}) leads to the desired result.
\end{proof}
\noindent Now we can compute the generalized spectral decomposition of the Hamilton operator $H^{\Lambda,g}$.
\begin{proof}[\text{Proof of Proposition \ref{generalized_ev}}]
The rigged Hilberst space statement (\ref{riggedE}) follows from the Kubo-Takenaka construction explained in Example \ref{ExKT}.
We will construct generalized eigenvectors for real non negative eigenvalues of $H^{\Lambda,g}$ which decomposes as
\begin{equation}
H^{\Lambda,g}=H_I^{\Lambda}+H_{II}^{\Lambda,g}+V^{\Lambda,g}-V_0^{\Lambda,g}.
\end{equation}
First, we analyze the operator $H_I^{\Lambda}$, which can be written as
\begin{equation}
\begin{split}
&H_I^{\Lambda}=U^{-1}H_IU\\
&H_I=-\frac{1}{2}\int_{\mathbf{R}^3}d^3 x\Delta_{\mathbf{R}^{3K}}\\
&U\Psi(\mathbf{A})=\Psi(\mathbf{A}(\varphi^{\Lambda}_t(\cdot-\cdot)))\text{ for }\mathbf{A}\in \mathcal{S}_{\bot}^{\prime}(\mathbf{R}^4,\mathbf{C}^{K \times 3}).
\end{split}
\end{equation}
Hence it suffices to construct a generalized eigenvector for $H_I$ and one follows for $H_I^{g}$ for the same generalized eigenvalue.
 Let  $x\in\mathbf{R}^3$ and $t\in\mathbf{R}$ now be fixed.
 For any $R>0$ the Laplace operator $\Delta$ on $[-\frac{R}{2},+\frac{R}{2}]^{3K}$ under Dirichlet boundary conditions
 has a discrete spectral resolution $(\lambda_k,\psi_k)_{k\ge0}$, where $\lambda_k=-\frac{\pi^2}{R^2}(k+1)$,
 and $\psi_k=\psi_k(\mathbf{A})\in C^{\infty}_0([-\frac{R}{2},+\frac{R}{2}]^{3K},\mathbf{C})$.
 We can extend $\psi_k$ outside the cube by setting its value to $0$, obtaining an approximated eigenvector for the approximated eigenvalue
 $\lambda_k$, which is in line with the fact that the Laplacian on $L^2(\mathbf{R}^{3K},\mathbf{C})$ has solely a continuous spectrum,
 which is $]-\infty,0]$. The functional on $\mathcal{S}^{\prime}_{\bot}(\mathbf{R}^3,\mathbf{C}^{K \times 3})$
\begin{equation}
\Psi^{\mathbf{A};x_0}_k(\bar{\mathbf{A}}):=\delta(\bar{\mathbf{A}}-\mathbf{A})\delta(x-x_0)\psi_k(\mathbf{A})
\end{equation}
for $x_0\in\mathbf{R}^3, k\in\mathbf{N}$ and $\mathbf{A}\in\mathbf{R}^{3K}$ is a generalized eigenvector in
$\mathcal{E}^{\prime}(\mathcal{S}^{\prime}_{\bot}(\mathbf{R}^3,\mathbf{C}^{K \times 3}),d\nu_0)$ for the operator $H_I$
on the rigged Hilbert space $L^2(\mathcal{S}^{\prime}_{\bot}(\mathbf{R}^3,\mathbf{C}^{K \times 3}),d\nu_0)$ for the generalized eigenvalue
$\frac{\pi^2}{R^2}(k+1)$.
%, which, by Theorem \ref{GK}, is an element of the continuous spectrum of the non negative operator $H_I$.
By varying the generalized eigenvector over $x_0$, $\mathbf{A}$, $k$ and $R$, we obtain a complete set of generalized eigenvectors for
$\mathcal{E}^{\prime}(L^2(\mathcal{S}^{\prime}_{\bot}(\mathbf{R}^3,\mathbf{C}^{K \times 3}),\mathbf{C},d\nu_0))$, because, if for any
$\Phi\in \mathcal{E}(L^2(\mathcal{S}^{\prime}_{\bot}(\mathbf{R}^3,\mathbf{C}^{K \times 3}),\mathbf{C},d\nu_0))$
\begin{equation}
\int_{\mathcal{S}^{\prime}_{\bot}(\mathbf{R}^3,\mathbf{C}^{K \times 3})}
\Psi^{\mathbf{A};x_0}_k(\bar{\mathbf{A}})
\bar{\Phi}(\bar{\mathbf{A}})d\nu_0(\bar{\mathbf{A}})=0,
\end{equation}
that is
\begin{equation}
\delta(x-x_0)\psi_k(\mathbf{A})\bar{\Phi}(\mathbf{A})=0,
\end{equation}
which holds true for all $x_0\in\mathbf{R}^3$, $\mathbf{A}\in\mathbf{R}^{3K}$ and all $k\ge0$ iff $\Phi=0$.
By Corollary \ref{invGK} the operator $H_I$ on $L^2(\mathcal{S}^{\prime}_{\bot}(\mathbf{R}^3,\mathbf{C}^{K \times 3}),\mathbf{C},d\nu_0)$
is selfadjoint with non negative spectrum. The same holds true for $H^{\Lambda}_I$\\
Next, we analyze the operator $H_{II}^{\Lambda,g}$, which can be written as
\begin{equation}
\begin{split}
&H_{II}^{\Lambda,g}=U^{-1}H_{II}^gU\\
&H_{II}=-\frac{g^2}{2}\int_{\mathbf{R}^3}d^3x\left[\int_{\mathbf{R}^3}d^3y\,D_i^a(\mathbf{A};x,y)\right]^2\\
&U\Psi(\mathbf{A})=\Psi(\mathbf{A}(\varphi^{\Lambda}_t(\cdot-\cdot)))\text{ for }\mathbf{A}\in \mathcal{S}_{\bot}^{\prime}(\mathbf{R}^4,\mathbf{C}^{K \times 3}),
\end{split}
\end{equation}
\noindent
for the operator $D=D(\mathbf{A;x,y})$ defined  as
\begin{equation}
D_i^a(\mathbf{A};x,y):=\partial_iG^{a,b}(\mathbf{A}(t,y);x,y)\varepsilon^{b,c,d}A_k^d(t,y)\frac{\delta}{\delta A_k^c(t,y)}.
\end{equation}
Hence it suffices to construct a generalized eigenvector for $H_{II}^g$ and one follows for $H_{II}^{\Lambda,g}$ for the same generalized eigenvalue.
Let  $x_0,y_0\in\mathbf{R}^3$ now be fixed. We set
\begin{equation}
f_{i, k}^{a,c}(\mathbf{A};x_0,y_0):= \partial_iG^{a,b}(\mathbf{A}(t,y_0);x_0,y_0)\varepsilon^{b,c,d}A_k^d(t,y_0)
\end{equation}
and apply Lemma \ref{DirichletB} and Lemma \ref{Diffeo}. Assuming that for all indices $c,k$
\begin{equation}\label{bound}
\int_{-\infty}^{+\infty}dA_k^c\,f_{i, k}^{a,c}(\mathbf{A}(t,y_0);x_0,y_0)^{-1}<+\infty
\end{equation}
uniformly in $\mathbf{A}$, we can find a diffeomeorphism $\mathbf{B}:\mathbf{R}^{3K}\rightarrow \mathbf{R}^{3K}$ in the form of formula (\ref{diffeoB}), such that
for any $R>0$ the  operator $D_i^a(\mathbf{A}(t,y_0);x_0,y_0)^2$ on $B^{-1}([-\frac{R}{2},+\frac{R}{2}]^{3K})$ under Dirichlet boundary conditions
has a discrete spectral resolution $(\lambda_{i,s}^a(x_0,y_0),\psi_{i,s}^a(\mathbf{A}(t,y_0);x_0,y_0))_{s\ge0}$, where
\begin{equation}\label{lambdas}
\lambda_{i,s}^a(x_0,y_0)=-\sum_{j=1}^3\sum_{c=1}^K\frac{\pi^2k_{j,c,s}^2}{\left[\int_{-\frac{R}{2}}^{+\frac{R}{2}}dB_{j}^{c}\,g_{i, j}^{a, c}(B_{j}^{c};x_0,y_0)^{-1}\right]^2},
 \end{equation}
where $k_{j,c,s}\in\mathbf{Z}^*$ for all indices $s\in\mathbf{N}_0$, $j\in\{1,2,3\}$ and $c\in\{1,\dots, K\}$, and, by Lemma \ref{Diffeo} we defined
\begin{equation}
g_{i,j}^{a,l}(B_j^l;x_0,y_0):=\left(\sum_{k=1}^3\sum_{c=1}^KL_{i, k}^{a,c}(x_0,y_0)\right)e^{{B_j^l}^2}
\end{equation}
for
\begin{equation}\label{LiKi}
\begin{split}
&L_{i,j}^{a,c}(x_0,y_0)=\left[\sup_{\mathbf{A}}\left[\int_{-\infty}^{A_j^c}\,d\bar{A}_j^c\,f_{i, j}^{a,c}(\mathbf{A};x_0,y_0)^{-1}+K_{i,j}^{a,c}(\mathbf{A};x_0,y_0)\right]\right]^{-1}\\
&K_{i,j}^{a,c}(\mathbf{A}\,;x_0,y_0):=-\inf_{A_j^c}\int_{-\infty}^{A_j^c}d\bar{A}_j^c\,f_{i, j}^{a,c}(\mathbf{A};x_0,y_0)^{-1}.
\end{split}
\end{equation}
Note that for any $R>0$ the  operator $D_i^a(\mathbf{A}(t,y_0);x_0,y_0)$ on $B^{-1}([-\frac{R}{2},+\frac{R}{2}]^{3K})$
under Dirichlet boundary conditions has a discrete spectral resolution with the same eigenvectors as $D_i^a(\mathbf{A}(t,y_0);x_0,y_0)^2$ but other eigenvalues $(\zeta_{i,s}^a(x_0,y_0),\psi_{i,s}^a(\mathbf{A}(t,y_0);x_0,y_0))_{s\ge0}$, where
\begin{equation}\label{lambdas}
\zeta_{i,s}^a(x_0,y_0)=-\sum_{j=1}^3\sum_{c=1}^K\frac{\imath\pi k_{j,c,s}^2}{\int_{-\frac{R}{2}}^{+\frac{R}{2}}dB_{j}^{c}\,g_{i, j}^{a, c}(B_{j}^{c};x_0,y_0)^{-1}},
 \end{equation}
where $k_{j,c,s}\in\mathbf{Z}^*$ for all indices $s\in\mathbf{N}_0$, $j\in\{1,2,3\}$ and $c\in\{1,\dots, K\}$.\\
Since $B^{-1}([-\frac{R}{2},+\frac{R}{2}]^{3K})\uparrow\mathbf{R}^{3K}$ for $R\uparrow+\infty$, we can extend $\psi_{i,s}^a(\cdot;x_0,y_0)$ outside the cube by setting its value to $0$, obtaining an approximated eigenvector for the approximated eigenvalue $\lambda_{i,s}^a(x_0,y_0)$ for the operator $D_i^a(\mathbf{A};x_0,y_0)^2$ on $L^2(\mathbf{R}^{3K}, \mathbf{C})$, which means that $\lambda_{i,s}^a(x_0,y_0)\in\spec_c(D_i^a(\mathbf{A};x_0,y_0)^2)$. For fixed $i,s$ and $a$ the functional
\begin{equation}\label{basis}
\Psi^{a,x_0,y_0, \mathbf{A}}_{i,k}(\bar{\mathbf{A}}):=\delta(\bar{\mathbf{A}}-\mathbf{A})\delta(x-x_0)\delta(y-y_0)\delta(\bar{y}-y_0)\psi_{i,k}^a(\mathbf{A};x_0,y_0)
\end{equation}
is a generalized eigenvector in $\mathcal{E}^{\prime}(\mathcal{S}^{\prime}_{\bot}(\mathbf{R}^3,\mathbf{C}^{K \times 3}),d\nu_0)$ for the operator
\begin{equation}
\begin{split}
H_{i,II}^{a,g}&:=-\frac{g^2}{2}\int_{\mathbf{R}^3}d^3x\left[\int_{\mathbf{R}^3}d^3y\,D_i^a(\mathbf{A};x,y)\right]^2=\\
&=-\frac{g^2}{2}\int_{\mathbf{R}^3}d^3x\left[\int_{\mathbf{R}^3}d^3y\,D_i^a(\mathbf{A};x,y)\right]\left[\int_{\mathbf{R}^3}d^3\bar{y}\,D_i^a(\mathbf{A};x,\bar{y})\right]
\end{split}
\end{equation}
on the rigged Hilbert space $L^2(\mathcal{S}^{\prime}_{\bot}(\mathbf{R}^4,\mathbf{C}^{K \times 3}),d\nu_0)$
for the strictly positive generalized eigenvalue
\begin{equation}\label{genev}
\lambda_{i,s}^{a,g}(x_0,y_0)=g^2\sum_{j=1}^3\sum_{c=1}^K\frac{\frac{\pi^2}{2}k_{j,c,s}^2}{\left[\int_{-\frac{R}{2}}^{+\frac{R}{2}}dB_{j}^{c}\,g_{i, j}^{a, c}(B_{j}^{c};x_0,y_0)^{-1}\right]^2}.
\end{equation}
By varying the generalized eigenvector over $x_0,y_0$, $\mathbf{A}$, $k$ and $R$, we obtain a complete set of generalized eigenvectors for
$\mathcal{E}^{\prime}(L^2(\mathcal{S}^{\prime}_{\bot}(\mathbf{R}^3,\mathbf{C}^{K \times 3}),\mathbf{C},d\nu_0))$, because, if for any
$\Phi\in \mathcal{E}(L^2(\mathcal{S}^{\prime}_{\bot}(\mathbf{R}^3,\mathbf{C}^{K \times 3}),\mathbf{C},d\nu_0))$
\begin{equation}
\int_{\mathcal{S}^{\prime}_{\bot}(\mathbf{R}^3,\mathbf{C}^{K \times 3})}
\Psi^{a,x_0,y_0, \mathbf{A}}_{i,k}(\bar{\mathbf{A}})
\bar{\Phi}(\bar{\mathbf{A}})d\nu_0(\bar{\mathbf{A}})=0,
\end{equation}
that is
\begin{equation}
\delta(x-x_0)\delta(y-y_0)\psi_{i,k}^a(\mathbf{A};x_0,y_0)=0,
\end{equation}
which holds true for all $x_0,y_0\in\mathbf{R}^3$, $\mathbf{A}\in\mathbf{R}^{3K}$ and all $k\ge0$ iff $\Phi=0$.
By Corollary \ref{invGK} the operators $H_{i,II}^a$ on $L^2(\mathcal{S}^{\prime}_{\bot}(\mathbf{R}^3,\mathbf{C}^{K \times 3}),\mathbf{C},d\nu_0)$
is selfadjoint with non negative spectrum. The same holds true for and thus $H_{II}^g$ and  $H_{II}^{\Lambda, g}$ for all $\Lambda\ge0$.\\
Finally, we analyze the multiplication operator
\begin{equation}
\begin{split}
&(V^{\Lambda,g}\Psi)(\mathbf{A})=\int_{\mathbf{R}^3}d^3x\,V^{\Lambda,g}(t,x,\mathbf{A})\Psi(\mathbf{A}),\\
&\\
&V^{\Lambda,g}(t,x,\mathbf{A}):=\frac{1}{16}\varepsilon_i^{j,k}\varepsilon_i^{p,q}\left\{[\partial_jA^a_k(\varphi^{\Lambda}_t(\cdot-x))-\partial_kA_j^a(\varphi^{\Lambda}_t(\cdot-x))+\right.
g\varepsilon^{a,b,c}A_j^b(\varphi^{\Lambda}_t(\cdot-x))A_k^c(\varphi^{\Lambda}_t(\cdot-x))]\cdot\\
&\qquad\qquad\qquad\cdot[\partial_pA^a_q(\varphi^{\Lambda}_t(\cdot-x))-\partial_qA_p^a(\varphi^{\Lambda}_t(\cdot-x))
\left.+g\varepsilon^{a,b,c}A_p^b(\varphi^{\Lambda}_t(\cdot-x))A_q^c(\varphi^{\Lambda}_t(\cdot-x))]\right\}.
\end{split}
\end{equation}
Let $\mathbf{A}\in L^2_{\bot}(\mathbf{R}^3,\mathbf{C}^{K \times 3},d^3x)$ now be fixed.
\begin{equation}\label{Vconv}
\begin{split}
&V^{\Lambda,g}(t,x,\mathbf{A})\longrightarrow\frac{1}{16}\varepsilon_i^{j,k}\varepsilon_i^{p,q}\left\{[\partial_jA^a_k(t,x)-\partial_kA_j^a(t,x)+\right.
g\varepsilon^{a,b,c}A_j^b(t,x)A_k^c(t,x)]\cdot\\
&\qquad\qquad\qquad\cdot[\partial_pA^a_q(t,x)-\partial_qA_p^a(t,x)
\left.+g\varepsilon^{a,b,c}A_p^b(t,x)A_q^c(t,x)]\right\}=|R^{\nabla^\mathbf{A}}(t,x)|^2\quad(\Lambda\rightarrow +\infty),
\end{split}
\end{equation}
where $R^{\nabla^\mathbf{A}}$ is the curvature operator associated to the connection $\mathbf{A}$.
Any non zero $\psi\in C^{\infty}_0(\mathbf{R}^{3K},\mathbf{C})$ is eigenvector of the multiplication operator on $L^2(\mathbf{R}^{3K},\mathbf{C})$ with the non negative real
$V^{\Lambda,g}(t,x,\mathbf{A})$ for $\Lambda$ big enough. The functional
\begin{equation}
\Psi^{\mathbf{A};x_0}_k(\bar{\mathbf{A}}):=\delta(\bar{\mathbf{A}}-\mathbf{A})\delta(x-x_0)\psi_k(\mathbf{A}),
\end{equation}
where $(\psi_k)_{k\ge0}$ is an orthonormal basis of $L^2(\mathbf{R}^{3K},\mathbf{C})$ , is a generalized eigenvector
in $\mathcal{E}^{\prime}(\mathcal{S}^{\prime}_{\bot}(\mathbf{R}^3,\mathbf{C}^{K \times 3}),d\nu_0)$
for the operator $V^{\Lambda, g}$ on the rigged Hilbert space $L^2(\mathcal{S}^{\prime}_{\bot}(\mathbf{R}^3,\mathbf{C}^{K \times 3}),d\nu)$
for the generalized eigenvalue $V^{\Lambda,g}(t,x_0,\mathbf{A})$.
By varying the generalized eigenvector over $x_0\in\mathbf{R}^3$, $\mathbf{A}\in\mathbf{R}^{3K}$, and $k\ge0$, we obtain a complete set of generalized eigenvectors for
$\mathcal{E}^{\prime}(L^2(\mathcal{S}^{\prime}_{\bot}(\mathbf{R}^3,\mathbf{C}^{K \times 3}),\mathbf{C},d\nu_0))$, because, if for any
$\Phi\in \mathcal{E}(L^2(\mathcal{S}^{\prime}_{\bot}(\mathbf{R}^3,\mathbf{C}^{K \times 3}),\mathbf{C},d\nu_0))$
\begin{equation}
\int_{\mathcal{S}^{\prime}_{\bot}(\mathbf{R}^3,\mathbf{C}^{K \times 3})}
\Psi^{x_0, \mathbf{A}}_{k}(\bar{\mathbf{A}})
\bar{\Phi}(\bar{\mathbf{A}})d\nu_0(\bar{\mathbf{A}})=0,
\end{equation}
that is
\begin{equation}
\delta(x-x_0)\psi_{k}(\mathbf{A};x_0)=0,
\end{equation}
which holds true for all $x_0\in\mathbf{R}^3$, $\mathbf{A}\in\mathbf{R}^{3K}$ and all $k\ge0$ iff $\Phi=0$.
By Corollary \ref{invGK} the operator $V^{\Lambda,g}$ on $L^2(\mathcal{S}^{\prime}_{\bot}(\mathbf{R}^3,\mathbf{C}^{K \times 3}),\mathbf{C},d\nu_0)$
is selfadjoint with non negative spectrum if $\Lambda$ is big enough.\par
We conclude that $H^{\Lambda,g}$ is a selfadjoint operator as a sum of selfadjoint operators.
For $\Lambda$ big enough, if we choose $V_0^{\Lambda,g}\le\inf\spec((V^{\Lambda,g})$,
then the spectrum of $H^{\Lambda,g}$ lies in $[0, +\infty[$. The proof is concluded.\\
%We still have to prove (\ref{bound}), and at the same time check that $\lambda_{i,s}^a(x_0,y_0)$ is bounded away from $0$ uniformly in $R$. By Proposition \ref{prop} and Corollary \ref{cor}, for every $n\in\mathbf{N}_0$ there is a constant $c_n>0$, bounded in $n$ such that
%\begin{equation}\label{boundint}
%\int_{-\frac{R}{2}}^{+\frac{R}{2}}dA_j^c(t,y_0)[\partial_iG^{a,b}(\mathbf{A};x_0,y_0)\varepsilon^{b,c,d}A_j^d(t,y_0)]^{-1}\le c_ng^{2n}\int_{-\infty}^{+\infty}dA_j^c(t,y_0)\,\frac{1}{1+|A_j^d(t,y_0)|^{2n+1}}.
%\end{equation}
%Therefore, inserting (\ref{boundint}) into (\ref{LiKi}) and  (\ref{lambdas}) leads to the spectral lower bound
%\begin{equation}
%\lambda_{i,s}^{a,g}(x_0,y_0)\ge C_n\, g^{2(n+1)}
%\end{equation}
%for all $i\in{1,2,3}$, $s\in\mathbf{N}_0$ and for all $n\in\mathbf{N}_0$, for an appropriate constant $C_n$ bounded in $n$. Since the collection of generalized eigenvectors obtained by varying (\ref{basis}) over $k, x_0$, and $y_0$ is complete in the sense of Theorem \ref{GK} (ii), by varying the generalized eigenvalue (\ref{genev}) over $k_{j,c, s}$ and $R$, the claim about the spectrum follows for $\eta=O(g^{2(n+1)})$. The proof is complete.\\
\end{proof}
\noindent Next, in view of the proof of Theorem \ref{pgs} we consider the commutative version of Theorem 4 in \cite{Gro72}
considering the remarks on page 59 therein.

\begin{theorem}\label{Gross}
Let $H_0$ be a nonnegative selfadjoint operator on $L^2(X,\mathbf{C},d\nu)$, where $(X,\mathcal{A},\nu)$ is a probability space.
Assume
\begin{itemize}
\item[(i)] $\exp(-tH_0)$ is a contraction in $L^p(X,\mathbf{C},d\nu)$ norm for all $t > 0$ and
all $p\in [1, +\infty]$ and $\exp(-TH_0)$ is a contraction from $L^2(X,\mathbf{C},d\nu)$ to $L^4(X,\mathbf{C},d\nu)$
for some real number $T > 0$.
\item[(ii)] $\exp(-tH_0)$ is positivity preserving for all $t > 0$.
\item[(iii)] The null space of $H_0$, $\ker(H_0)$ is spanned by the identity element of the algebra of bounded measurable functions on $(X,\mathcal{A})$.
\end{itemize}
Let $V$ be a selfadjoint operator given by the multiplication by some measurable real function $v$ on  $L^2(X,\mathbf{C},d\nu)$. Assume
\begin{itemize}
\item[(iv)]  $v\in L^k(X,\mathbf{C},d\nu)$ for some real number $k > 2$ and $\exp(-v)\in L^p(X,\mathbf{C},d\nu)$ is for all $p < +\infty$.
\end{itemize}
Then:
\begin{itemize}
\item[(a)] $H_0 + V$ is essentially selfadjoint and its closure $H$ is bounded from
below.
\item[(b)] If $\lambda = \inf\spec(H)$, then $\lambda$ is an eigenvalue of $H$ of
multiplicity one and there exists a corresponding non-negative eigenvector.
\end{itemize}
\end{theorem}

\begin{proof}[\text{Proof of Theorem \ref{pgs}}]
If $\Lambda$ big enough and $V_0^{\lambda,g}$ small enough, by Proposition \ref{generalized_ev} the operator $H^{\Lambda,g}$ is selfadjoint with non negative spectrum on
$\mathcal{H}:=L^2(X,\mathbf{C},d\nu_0)$ for $X:=\mathcal{S}^{\prime}_{\bot}(\mathbf{R}^3,\mathbf{C}^{K \times 3})$.
With the choice $V_0^{\Lambda,g}:=\inf\spec(H_I^{\Lambda}+H_{II}^{\Lambda,g}+V^{\Lambda,g})$ we obtain
\begin{equation}
0=\inf\spec(H^{\Lambda,g}),
\end{equation}
which, by Theorem \ref{Gross} (b), is a simple eigenvalue for an eigenvector $\Omega^{\Lambda,g}_0$.
With the choices
\begin{equation}
H_0:=H^{\Lambda,g}\text{ and } v:=0,
\end{equation}
the assumptions of Theorem \ref{Gross}
are satisfied, because, being $H_0$ selfadjoint with non-negative spectrum, we have
the spectral representation by of the projection valued measure $E:\mathcal{B}(\mathbf{R})\rightarrow\mathcal{L}(\mathcal{H})$ as
\begin{equation}
H_0\varphi=\int_0^{+\infty}\lambda dE(\lambda)\varphi\qquad\text{for }\varphi\in\mathcal{D}(H_0).
\end{equation}
Therefore, for all $t\ge0$
\begin{equation}
\begin{split}
&\exp(-tH_0)\varphi=\int_0^{+\infty}\exp(-t\lambda) dE(\lambda)\varphi\\
&\text{for }\varphi\in\mathcal{D}(\exp(-tH_0)):=\left\{\varphi\in\mathcal{H}\left|\,\int_0^{+\infty}|\exp(-t\lambda)|^2 d(E(\lambda)\varphi,\varphi)<+\infty\right.\right\}
\end{split}
\end{equation}
%Hence $\exp(-tH_0)\in\mathcal{L}(\mathcal{H})$
%\begin{equation}
%\|\exp(-tH_0)\|_{\mathcal{L}(\mathcal{H})}\le\int_0^{+\infty}\exp(-t\lambda) dE(\lambda)\in\mathcal{L}(\mathcal{H})
%\end{equation}

\begin{itemize}
\item[(i)] For $p=2$ we have for all $\varphi\in\mathcal{H}$
\begin{equation}
\begin{split}
&\|\exp(-tH_0)\varphi\|_{\mathcal{H}}^2=
\left(\exp(-tH_0)\varphi,\exp(-tH_0)\varphi\right)=\left(\exp(-2tH_0)\varphi,\varphi\right)=\\
&\quad=\int_0^{+\infty}\underbrace{\exp(-2t\lambda)}_{\le 1} d(E(\lambda,\varphi,\varphi)\le\|\varphi\|_{\mathcal{H}}^2.
\end{split}
\end{equation}
Hence, $\exp(-tH_0)$ is a contraction on $L^2$.\\
For $p\in[1,+\infty[$ we have
\begin{equation}
\|\exp(-tH_0)\varphi\|_{L^p}^p=\int_X\left|\int_0^{+\infty}\exp(-t\lambda) dE(\lambda)\varphi\right|^pd\nu
\le\underbrace{\left\|\int_0^{+\infty}\underbrace{\exp(-t\lambda)}_{\le 1} dE(\lambda)\right\|_p^p}_{\le 1}\|\varphi\|_{p}^p.
\end{equation}
Hence, $\exp(-tH_0)$ is a contraction on $L^p$ for $p\in[1,+\infty[$.\\
For $p=+\infty$ we have
\begin{equation}
\|\exp(-tH_0)\varphi\|_{L^\infty}=\int_X\sup_X\left|\int_0^{+\infty}\exp(-t\lambda) dE(\lambda)\varphi\right|d\nu
\le\underbrace{\left\|\int_0^{+\infty}\underbrace{\exp(-t\lambda)}_{\le 1} dE(\lambda)\right\|_{\infty}}_{\le 1}\|\varphi\|_{\infty}.
\end{equation}
Hence, $\exp(-tH_0)$ is a contraction on $L^{\infty}$.
\item[(ii)] Let us write $\exp(-tH_0)$ as integral operator
\begin{equation}\label{repK}
\exp(-tH_0)\varphi(x)=\int_XK(x,y)\varphi(y)d\nu(y)
\end{equation}
for an  appropriate with integral kernel $K=K(x,y)$. Therefore, for all $\varphi\in\mathcal{H}$
\begin{equation}
(\exp(-tH_0)\varphi,\varphi)=
\int_{X^2}K(x,y)|\varphi(y)|^2d\nu(x)d\nu(y)=
\left\|\exp\left(-\frac{t}{2}H_0\right)\varphi\right\|^2_{\mathcal{H}}\ge
0.
\end{equation}
This can only be true if $K\ge0$. Hence, for $\varphi>0$, (\ref{repK}) shows that $\exp(-tH_0)\varphi>0$, meaning that $\exp(-tH_0)\varphi$
is positivity preserving.
\item[(iii)] Every element of $\ker(H_0)\subset\mathcal{E}(L^2(\mathcal{S}^{\prime}_{\bot}(\mathbf{R}^3,\mathbf{C}^{K \times 3}),\mathbf{C},d\nu_0))$ must be a bounded measurable function on $X$,
because $H_0$ is an infinite dimensional elliptic operator \cite{BeKo12}.
\item[(iv)] The choice $v=0$ satisfies this assumption.
\end{itemize}
\noindent The proof is completed.\\
\end{proof}
\begin{rem}
The ground state $\Omega_0^{\Lambda,g}$ of $H^{\Lambda,g}$ has been constructed in $L^2(\mathcal{S}^{\prime}_{\bot}(\mathbf{R}^3,\mathbf{C}^{K \times 3}),\mathbf{C},d\nu_0)$, where $\nu_0$ is the standard Gaussian measure.
This is NOT the measure for which we will verify the Osterwalder-Schrader axioms.
\end{rem}

\subsection{Construction of the Probability Measure under the Ultraviolet and Infrared Cut Offs and Regularization}
To construct appropriate probability measures  on $\mathcal{S}^{\prime}_{\bot}(\mathbf{R}^4,\mathbf{C}^{K \times 3})$ we need results about infinitesimal generators of time inhomogeneous It\^{o}' s diffusions.
\begin{proposition}\label{infGen}
Let $(W_t)_{t\ge0}$ be a $M$-dimensional standard $\mathbb{P}-$Brownian motion with respect to the filtration $(\mathcal{A}_t)_{t\ge0}$. Let
$b:[0,+\infty[\times\mathbf{R}^N\rightarrow\mathbf{R}^N$, $\sigma:[0,+\infty[\times\mathbf{R}^N\rightarrow\mathbf{R}^{N\times M}$ be Borel measurable and locally bounded functions,
satisfying
\begin{equation}
\left|b(t,x)-b(t,y)\right|_{\mathbf{R}^N}+\left|\sigma(t,x)-\sigma(t,y)\right|_{\mathbf{R}^{N\times M}}\le K \left|x-y\right|_{\mathbf{R}^N}
\end{equation}
for a positive constant $K$, meaning Lipschitz-continuity with respect to $x$ uniform in $t$. The solution of the SDE
\begin{equation}\label{Ito}
\left\{
  \begin{array}{l}
    dX_t=b(t,X_t)dt+\sigma(t,X_t)dW_t \\
    X_0=x_0\in\mathbf{R}^N,
  \end{array}
\right.
\end{equation}
is a time inhomogeneous It\^{o}'s diffusion, whose infinitesimal generator is given by the PDO with variable coefficients
\begin{equation}
\begin{split}
&L_t=\frac{1}{2}\sum_{i,i=1}^Na_{i,j}(t,x)\frac{\partial^2}{\partial x_i\partial x_j}+\sum_{i=1}^Nb_i(t,x)\frac{\partial}{\partial x_j}\\
&\mathcal{D}(L_t):=C^{\infty}_0(\mathbf{R}^N,\mathbf{R}^N)\subset L^2(\mathbf{R}^N,\mathbf{R}^N, d^Nx)\rightarrow L^2(\mathbf{R}^N,\mathbf{R}^N, d^Nx),
\end{split}
\end{equation}
where
\begin{equation}
a(t,x):=\sigma(t,x)\sigma(t,x)^{\dagger}.
\end{equation}
Conversely, if the operator $L_t$ is elliptic for all $t\ge0$, then for $\sigma(t,x):=a^{\frac{1}{2}}(t,x)$ and $M=N$, there exists an It\^{o}'s diffusion as (\ref{Ito}) whose transition density $\kappa_t(x_0,x)$ is the heat kernel of $L_t$, i.e. the solution of
\begin{equation}
\left\{
  \begin{array}{l}
    \frac{\partial}{\partial t}u(t,x)=L_tu(t,x) \\
    u(0,x)=\delta(x-x_0)\in\mathcal{S}^{\prime}(\mathbf{R}^N,\mathbf{R}^N).
  \end{array}
\right.
\end{equation}
It follows that the solution of
\begin{equation}
\left\{
  \begin{array}{l}
    \frac{\partial}{\partial t}u(t,x)=L_tu(t,x) \\
    u(0,x)=f(x)\in C^{\infty}_0(\mathbf{R}^N,\mathbf{R}^N)
  \end{array}
\right.
\end{equation}
is given by
\begin{equation}
u(t,x_0)=\mathbb{E}[f(X_t)|\mathcal{A}_t]=\int_{\mathbf{R}^N} f(x)\kappa_t(x_0,x)d^Nx
\end{equation}
\end{proposition}
\begin{proof}
See chapters 8.3-8.5 of \cite{CCFI11} and chapters VII.1-VII.2 \cite{RJ99}.
\end{proof}
\noindent What is the situation in the infinite dimensional case?
\begin{proposition}\label{diffInf}
Let $\mathcal{F}$ and $\mathcal{G}$ be a real separable Hilbert spaces and $\mathcal{L}_{\text{HS}}(\mathcal{G},\mathcal{F})$ denote
the vector space of all Hilbert-Schmidt operators from $\mathcal{G}$ to $\mathcal{F}$.
Let $(W_t)_{t\ge0}$ be a standard $\mathbb{P}-$Wiener process with respect to the filtration $(\mathcal{A}_t)_{t\ge0}$ taking values  $\mathcal{G}$ and assume that
$b:[0,+\infty[\times\mathcal{F}\rightarrow\mathcal{F}$, $\sigma:[0,+\infty[\times\mathcal{F}\rightarrow\mathcal{L}_{\text{HS}}(\mathcal{G},\mathcal{F})$ be Borel measurable and locally bounded functions,
satisfying
\begin{equation}
\left|b(t,x)-b(t,y)\right|_{\mathcal{F}}+\left|\sigma(t,x)-\sigma(t,y)\right|_{\mathcal{L}_{\text{HS}}(\mathcal{G},\mathcal{F})}\le K \left|x-y\right|_{\mathcal{F}}
\end{equation}
for a positive constant $K$, meaning Lipschitz-continuity with respect to $x$ uniform in $t$. The (strong) solution of the SDE
\begin{equation}\label{Ito}
\left\{
  \begin{array}{l}
    dX_t=b(t,X_t)dt+\sigma(t,X_t)dW_t \\
    X_0=x_0\in\mathcal{F}
  \end{array}
\right.
\end{equation}
is a time inhomogeneous It\^{o}'s diffusion. In particular, it satifies the Markov property and thus is a Markov process.
\end{proposition}
\begin{proof}
It is a special case of Theorem 7.4 of \cite{DZ92} for identity covariance of the Wiener process and vanishing linear operator in the drift.
The Markov property follows from Theorem 9.8 of \cite{DZ92}.
\end{proof}

 We can now proceed with the construction of a probability measure on $\mathcal{S}^{\prime}_{\bot}(\mathbf{R}^4,\mathbf{C}^{K \times 3})$.
Inspired by the treatment of the quantum field associated to a particle in a potential as depicted in chapter 3 of \cite{GJ87},
we adapt the ideas therein to the Yang-Mills fields with a cutoff. Using the Feynman-Ka\v{c} formula, we construct probability
measures on $\mathcal{E}$ and $\mathcal{H}$ satisfying those Osterwalder-Schrader axioms implying the reconstruction theorem
of quantum mechanics, and thus the selfadjointness and non-negativity of the cutoff Hamilton operator. However, due to the
presence of a non-local term in $H_{II}^{\lambda, g}$ created by modified green function $G$, it is not possible, unless $g=0$ to
realize a fibrewise construction over every $x\in\mathbf{R}^3$, and then integrate over $x$.\\
If we exclude for the moment the part of the Hamiltonian containing the potential, from Proposition \ref{quantizationprop} formula (\ref{H-V}) we can write
\begin{equation}
\begin{split}
&H_I^{\Lambda}+H_{II}^{\Lambda,g}=\int_{\mathbf{R}^3}d^3x\,
\left\{-\frac{1}{2}\left[\frac{\delta}{\delta A_i^a(\varphi^{\Lambda}_t(\cdot-x))}\right]^2+\right.\\
&\qquad\qquad\qquad\;\left.-\frac{g^2}{2}\left[\int_{\mathbf{R}^3}d^3y\,\partial_iG^{a,b}(\mathbf{A}(\varphi^{\Lambda}_t(\cdot-\cdot));x,y)
\varepsilon^{b,c,d}A_k^d(\varphi^{\Lambda}_t(\cdot-y))\frac{\delta}{\delta A_k^c(\varphi^{\Lambda}_t(\cdot-y))}\right]^2\right\}.
\end{split}
\end{equation}
%Let us consider the operators
%\begin{equation}
%\begin{split}
%U^{\Lambda}:\mathcal{S}^{\prime}_{\bot}(\mathbf{R}^3,\mathbf{C}^{K \times 3})&\longrightarrow L^2_{\bot}(\mathbf{R}^3,\mathbf{C}^{K \times 3})\\
%\mathbf{A}&\longmapsto U^{\Lambda}(\mathbf{A})(y):= \mathbf{A}(\phi_t^{\Lambda}(\cdot-y))
%\\\\
%J: L^2_{\bot}(\mathbf{R}^3,\mathbf{C}^{K \times 3})&\longrightarrow \mathcal{S}_{\bot}(\mathbf{R}^3,\mathbf{C}^{K \times 3})\\
%\mathbf{A}&\longmapsto J(\mathbf{A}):=\mathbf{A}.
%\end{split}
%\end{equation}
%For any probability $\nu$, every time $t\in\mathbf{R}$ (seen as parameter) and cutoff parameter $\Lambda>0$
%the operator $H_I^{\Lambda}+H_{II}^{\Lambda,g}$ on  $L^2(\mathcal{S}^{\prime}_{\bot}(\mathbf{R}^3,\mathbf{C}^{K \times 3}),\mathbf{R},d\nu)$ satisfies
%\begin{equation}
%U^{\Lambda}(H_I^{\Lambda}+H_{II}^{\Lambda,g})J = h_0^g,
%\end{equation}
%where
For any probability $\mathbb{P}$, every time $t\in\mathbf{R}$ (seen as parameter) let us now consider
the operator $h_0^g$ on the Hilbert space $L^2(L^2_{\bot}(\mathbf{R}^3,\mathbf{C}^{K \times 3}),\mathbf{C}, d\mathbb{P})$ defined
 as
\begin{equation}
\begin{split}
&H_0^g:=\int_{\mathbf{R}^3}d^3x\,h_0^g(x),\text{ where }\\
&h_0^g(x):=\left\{-\frac{1}{2}\left[\frac{\delta}{\delta A_i^a(t,x)}\right]^2
-\frac{g^2}{2}\left[\int_{\mathbf{R}^3}d^3y\,\partial_iG^{a,b}(\mathbf{A}(t,\cdot);x,y)\varepsilon^{b,c,d}A_k^d(t,y)\frac{\delta}{\delta A_k^c(t,y)}\right]^2\right\}.
\end{split}
\end{equation}
\noindent Since
\begin{equation}\label{lim}
\left[\frac{\delta}{\delta A_i^a(t,x)}\right]^2=
-\lim_{\Xi\rightarrow+\infty}\int_{\mathbf{R}^3\times\mathbf{R}^3}d^3y\,d^3\bar{y}\,\psi^{\Xi}(y-x)\psi^{\Xi}(\bar{y}-x)
\delta_d^{\bar{d}}\delta_k^{\bar{k}}\frac{\delta} {\delta A_k^d(t,y)}
\frac{\delta}{\delta A_{\bar{k}}^{\bar{d}}(t,\bar{y})},
\end{equation}
where $(\psi^{\Xi})_{\Xi\ge0}\subset\mathcal{S}(\mathbf{R}^3)$ is a delta sequence,
 i.e. $\mathcal{S}^{\prime}-\lim_{\Xi\rightarrow+\infty}\psi^{\Xi}=\delta\in\mathcal{S}^{\prime}(\mathbf{R}^3)$, for which $\psi^{\Xi}>0$ for all $\Xi$,
 and the limit is pointwise on the domain of definition of the operator on the l. h. s. of (\ref{lim}),
we can express the operator $h_0^g(x)$ as
\begin{equation}\label{exp}
\begin{split}
h_0^{g}(x)&=
-\lim_{\Xi\rightarrow+\infty}\left[
\int_{\mathbf{R}^3}d^3y\,
b_k^d(\Xi,x,y,g;\mathbf{A}(t,\cdot))
\frac{\delta}{\delta A_{k}^{d}(t,y)}\right.+\\
&\qquad\qquad\qquad+\left.
\int_{\mathbf{R}^3\times\mathbf{R}^3}d^3y\,d^3\bar{y}\;a_{k,\bar{k}}^{d,\bar{d}}(\Xi,x, y,\bar{y},g;\mathbf{A}(t,\cdot))
\frac{\delta}{\delta A_k^d(t,y)}
\frac{\delta}{\delta A_{\bar{k}}^{\bar{d}}(t,\bar{y})}
\right],
\end{split}
\end{equation}
\noindent for appropriate matrix $a(\Xi, x, y, \bar{y},g;\mathbf{A}(t,\cdot))$ and vector $b(\Xi,x,y,g;\mathbf{A}(t,\cdot))$ valued coefficient functions.
The horizontal vector $b=[b^d_k]$ has entries ordered by the bi-index $(d,k)$. The quadratic matrix $a=[a^{d,\bar{d}}_{k,\bar{k}}]$ has row index $(d,k)$
and column index $(\bar{d},\bar{k})$. Note that we have written the matrices
$\mathbf{A}(t,y)$ and $\mathbf{A}(t,\bar{y})$
in their equivalent column vector forms. The limit in (\ref{exp}) holds pointwise on the domain of definition of $h_0^g$.   Furthermore, equation (\ref{exp}) becomes
\begin{equation}\label{xi2}
h_0^g(x)=
-\lim_{\Xi\rightarrow+\infty} \left<b(\Xi,x,g;\mathbf{A}(t,\cdot)),\frac{\delta}{\delta\mathbf{A}(t,\cdot)}\right>_{\mathbf{A}}+\left<a(\Xi,x,g;\mathbf{A}(t,\cdot))\frac{\delta}{\delta\mathbf{A}(t,\cdot)},\frac{\delta}{\delta\mathbf{A}(t,\cdot)}\right>_{\mathbf{A}},
\end{equation}
\noindent where
\begin{equation}
b(\Xi,x,g;\mathbf{A}(t,\cdot))=y\mapsto[b^{d}_{k}(\Xi, x, y,g;\mathbf{A}(t,\cdot))]\\
\end{equation}
\noindent and
\begin{equation}
\begin{split}
a(\Xi,x,g;\mathbf{A}(t,\cdot))&=y\mapsto[a^{d,\bar{d}}_{k,\bar{k}}(\Xi,x,y,g;\mathbf{A}(t,\cdot))]\\
\left(a(\Xi,y,g;\mathbf{A}(t,\cdot)))\frac{\delta}{\delta \mathbf{A}(t,y)}\right)^{\bar{d}}_{\bar{k}}&=\int_{\mathbf{R}^3}d^3\bar{y}\,a^{d,\bar{d}}_{k,\bar{k}}(\Xi,y,\bar{y},g;\mathbf{A}(t,\cdot))\frac{\delta}{\delta A^d_k(t,\bar{y})}
\end{split}
\end{equation}

\noindent with the notation, for any $\mathbf{B}\in L^2_{\bot}(\mathbf{R}^3,\mathbf{C}^{K \times 3})$
\begin{equation}
\left<b_1, b_2\right>_{\mathbf{B}}:=\int_{\mathbf{R}^3}d^3y\;b_1^r(\mathbf{B}(y))b_2^r(\mathbf{B}(y))
\end{equation}
 for $b_{1,2}\in L^2(\mathbf{C}^{K \times 3},\mathbf{R})$. \par

%$a\in L^2(\mathbf{C}^{K \times 3}\times \mathbf{C}^{K \times 3},\mathbf{C}^{K \times 3})$.

Both $b$ and $a$ are functionals of $\mathbf{A}$
and depend on the parameters $\Xi$.
There exists a $g_0\in[0,1[$ (not depending on $\Lambda$!), such that, if the coupling constant $g\in[0,g_0[$, then
 the expression $a(\Xi,x, g;\mathbf{A})$
represents a positive definite operator valued functional of $\mathbf{A}=\mathbf{A}(t,\cdot)$ seen as $\mathbf{C}^{3K}$-valued function in the
 separable Hilbert space $L^2(\mathbf{R}^3,\mathbf{C}^{3K})$. By Proposition \ref{diffInf}, we can construct the diffusion
 $\mathfrak{A}_t=\mathfrak{A}_t(\Xi,x,g)$
\begin{equation}
    d\mathfrak{A}_t=b(\Xi,x,g;\mathfrak{A}_t)dt+a^{\frac{1}{2}}(\Xi,x,g;\mathfrak{A}_t)dW_t,
\end{equation}
where $(W_t)_{t\ge0}$ is the standard Wiener process adapted to the filtration $(\mathcal{A}_t)_{t\ge0}$ in the Hilbert space $L^2(\mathbf{R}^3,\mathbf{C}^{3K})$. Note that $a^{\frac{1}{2}}$ is a Hilbert-Schmidt
operator because $a$ is a Hilbert-Schmidt integral operators. Both $b$ and $a^{\frac{1}{2}}$ are Lipschitz-continuous with respect to $\mathbf{A}$, because they are Fr\'{e}chet differentiable with continuous derivative. The Lipschitz constant does not depend on $t$, because $b$ and $a^{\frac{1}{2}}$ do not either.\par
For fixed $x\in\mathbf{R}^3$ the
$\mathbf{R}^{3K}$-valued process $\mathfrak{A}_t(\Xi,x,g)$ is a Markov process with stochastic kernel
$\kappa_{t,s}(\Xi,x,g)$ such that for all $\mathbf{A}_t\in\mathbf{C}^{3K}$ and all measurable $B\in\mathcal{A}_s$
\begin{equation}
\mathbb{P}[\mathfrak{U}_s(\Xi,g;x)\in B\,|\,\mathfrak{U}_t(\Xi,g,x)=\mathbf{A}_t]= \kappa_{t,s}(\Xi,g,x;\mathbf{A}_t,B).
\end{equation}
%The notation $\mathbf{A}^1\le\mathbf{A}^2$ means that the inequality holds true for real and imaginary part of every matrix entry. 

%The density $\kappa_t$ is a possibly  generalized function in the variable $\mathbf{B}$.
%\noindent The limit
%\begin{equation}
%\kappa_{t,s}(g,x;\mathbf{A}_t,B):=\lim_{\Xi\rightarrow+\infty}\kappa_{t,s}(\Xi,g,x;\mathbf{A}_t,B)
%\end{equation}
%exists, because all limits in $\Xi$ in formulae (\ref{exp}) and (\ref{xi2}) exists. (CHECK!!!!)

Let $\mathcal{W}_{\bot}(\mathbf{A}, \mathbf{A}^\prime, t,x)$ be the set of continuous paths $\mathbf{A}(s,x)$ in $\mathbf{C}^{3K}$
which take the values $\mathbf{A}(-t/2,x)=\mathbf{A}$ and $\mathbf{A}(+t/2,x)=\mathbf{A}^\prime$ at their endpoints such that their transversal component $\mathbf{A}^{\parallel}(s,\cdot)$ vanishes for all $s$.
The cylinder sets of $\mathcal{W}_{\bot}(\mathbf{A}, \mathbf{A}^\prime, t,x)$ have the form
\begin{equation}
\begin{split}
Z_{\bot}(\mathbf{A},\mathbf{A}^\prime,t,\{I_j\}_j,x)&=\left\{\mathbf{A}(s,x)\,\left|\mathbf{A}\in\mathcal{W}_{\bot}(\mathbf{A}, \mathbf{A}^\prime, t,x),\,\mathbf{A}(-t/2,x)=\mathbf{A},\mathbf{A}(+t/2,x)=\mathbf{A}^\prime, \right.\right.\\
&\qquad\left.\mathbf{A}(t_j,x)\in I_j,\text{ for all }j=1,\dots,n\right\},
\end{split}
\end{equation}
where $-t/2<t_1<t_2<\dots<t_n<t/2$ and $I_j$ are Borel subsets of $\mathbf{C}^{3K}$. On these cylinder sets we can define the measure given by
\begin{equation}\label{U_measure}
U_{\mathbf{A},\mathbf{A}^\prime}^{t, x,g,\Xi}(Z):=
\kappa_{-t/2,t_1}(\Xi,g,x;\mathbf{A},\cdot)\otimes\kappa_{t_1,t_2}(\Xi,g,x)\otimes\dots\otimes\kappa_{t_{n-1},t_{n}}(\Xi,g,x)\otimes\kappa_{t_n, +t/2}(\Xi,g,x;\cdot,\mathbf{A}^\prime),
\end{equation}
which is countably additive and has a unique extension to the Borel subsets of $\mathcal{W}_{\bot}(\mathbf{A}, \mathbf{A}^\prime, t,x)$, being the tensor product of stochastic kernels.\par
The proof of Proposition \ref{generalized_ev} contains the proof that the operator $h_0(x)$ on $L^2(\mathbf{C}^{3K},\mathbf{C})$ is selfadjoint,
and from Theorem \ref{Gross}, $h_0^{g}(x)$ has a unique ground state that we denote by
$\omega_0^{x,g}=\omega_0^{ x,g}(\mathbf{A})\in L^2(\mathbf{C}^{K \times 3},\mathbf{C})$
. For any $R>0$, termed \textbf{infrared cutoff}, the expression
\begin{equation}\label{defxi}
d\xi^{R,g}_{t}:=\int_{|x|\le R}d^3x\int_{|s|\le \frac{t}{2}}d^1s\int_{\mathbf{R}^{3K}\times\mathbf{R}^{3K}}\;dU_{\mathbf{A},\mathbf{A}^\prime}^{s, x,g,R}
\;\omega_0^{x,g}(\mathbf{A})\,\omega_0^{x,g}(\mathbf{A}^\prime)
\end{equation}
\noindent defines a finite measure on $\mathcal{W}_{\bot}(\mathbf{A}, \mathbf{A}^\prime, t,x)$ 
which pushes forward by means of the inclusion to a finite measure $\xi^{g,R}_{t}$ on $\mathcal{S}_{\bot}(\mathbf{R}^4,\mathbf{C}^{K \times 3})$ first and to $\mathcal{S}^{\prime}_{\bot}(\mathbf{R}^4,\mathbf{C}^{K \times 3})$ next.
%This, because, for all $t\ge0$ and $R>0$
%\begin{equation}\label{xibound}
%\int_{\mathcal{S}^{\prime}_{\bot}(\mathbf{R}^4,\mathbf{C}^{K \times 3})} d\xi^{R,g}_{t}\le t\int_{|x|\le R}d^3x\|\omega_0^{x,g}\|^2_{L^2(\mathbf{C}^{K \times 3},\mathbf{C})}<+\infty
%\end{equation}
\noindent Note that from (\ref{U_measure}) to (\ref{defxi}) we have chosen $\Xi=R>0$, which is legitimate, because in our construction they both will tend to infinity. Therefore, inspired by \cite{Ja82}, we can introduce the
\begin{defi}[\textbf{Infrared/Ultraviolet Cutoff Measure}]\label{IUCutoffM}
With
\begin{equation}\label{def_mu_Lambda_0}
\begin{split}
Z_{t}^{R,g}&:=\int_{\mathcal{S}_{\bot}^{\prime}(\mathbf{R}^4,\mathbf{C}^{K \times 3})}\,d\xi^{R,g}_{t}\\
d\mu^{R, g}_{t}&:=\frac{1}{Z_{t}^{R,g,}}d\xi^{R,g}_{t},
\end{split}
\end{equation}
which is a probability measure on on $\mathcal{S}^{\prime}_{\bot}(\mathbf{R}^4,\mathbf{C}^{K \times 3})$,
we can define
\begin{equation}\label{def_mu_Lambda}
\begin{split}
Z_{t}^{\Lambda,R, g}&:=\int_{\mathcal{S}_{\bot}^{\prime}(\mathbf{R}^4,\mathbf{C}^{K \times 3})}\,\left[\exp\left(-\int_{|x|\le R}d^3x\int_{-\frac{t}{2}}^{+\frac{t}{2}}\,ds\,V^{\Lambda,g}(s,x,\mathbf{A})\right)\right]d\mu^{R,g}_t\\
d\mu^{\Lambda,R, g}_{t}&:=\frac{1}{Z_{t}^{\Lambda,R, g}}\left[\exp\left(-\int_{|x|\le R}d^3x\int_{-\frac{t}{2}}^{+\frac{t}{2}}\,ds\,V^{\Lambda,g}(s,x,\mathbf{A})\right)\right]d\mu^{R,g}_t
\end{split}
\end{equation}
as a probability measure on $\mathcal{S}_{\bot}^{\prime}(\mathbf{R}^4,\mathbf{C}^{K \times 3})$.
  %with generating functional
%\begin{equation}\label{es_lambda_t}
%S^{\Lambda,R,g}_t(f)=\int_{\mathcal{S}_{\bot}^{\prime}(\mathbf{R}^4,\mathbf{C}^{K \times 3})}e^{\imath\mathbf{A}(f)}d\mu_t^{\Lambda,R,g}(\mathbf{A}),
%\end{equation}
%\noindent for $f\in\mathcal{S}_{\bot}(\mathbf{R}^4,\mathbf{C}^{K \times 3})$.
\end{defi}
\noindent We remark that $\mu_t^{\Lambda,R,g}$ is not gauge invariant and not translation invariant.
\subsection{Infrared Cutoff Removal and Reconstruction of a Selfadjoint Hamiltonian}
\begin{defi}[\textbf{Ultraviolet Measure}]\label{UCutoffM}
For any measurable $A\subset\mathcal{S}_{\bot}^{\prime}(\mathbf{R}^4,\mathbf{C}^{K \times 3})$ let
\begin{equation}\label{UVMD}
\begin{split}
\mu_t^{\Lambda,g}(A)&:=\limsup_{R\rightarrow+\infty}\mu_t^{\Lambda,R,g}(A)\\
\mu^{\Lambda,g}(A)&:=\limsup_{t\rightarrow+\infty}\mu_{t}^{\Lambda,g}(A).
\end{split}
\end{equation}
\end{defi}
\noindent Are $\mu_t^{\Lambda,g}$ and $\mu^{\Lambda,g}$ measures? The answer is yeas and requires several steps. F.i. in \cite{Doo94}, chapter IX.10 we can find the proof of
\begin{theorem}[\textbf{Vitali-Hahn-Saks}]\label{VHS}
Let $(X,\mathcal{B})$ be a measurable space and $(\mu_j)_{j\ge0}$ a sequence of probability measures such that $(\mu_j(A))_{j\ge0}$
converges for all measurable $A\in\mathcal{B}$. Then, $\mu(A):=\lim_{j\rightarrow+\infty}\mu_j(A)$ defines a probability on $(X,\mathcal{B})$.
\end{theorem}
\begin{rem}
In (\ref{UVMD}) for any $A$ we can always find a sequence $R_j\uparrow+\infty$ as $j\rightarrow+\infty$ such that $\mu_{R_j}^{\Lambda,g}(A)\rightarrow \mu^{\Lambda,g}(A)$ as $j\rightarrow+\infty$.
But this sequence can a priori depend on $A$, so that we cannot apply immediately the Vitali-Hahn-Saks theorem.
\end{rem}
\noindent Nevertheless, we have
\begin{proposition}\label{muLProb}
The expressions $\mu_t^{\Lambda,g}$ and $\mu^{\Lambda,g}$ in (\ref{UVMD}) define for all $\Lambda\ge0$ big enough, $g\in[0,1[$ small enough
and $t>0$ probability measures on $\mathcal{S}_{\bot}^{\prime}(\mathbf{R}^4,\mathbf{C}^{K \times 3})$.
\end{proposition}
\begin{proof}
%The space of tempered distributions is hemicompact, see f.i. \cite{CCM09}. Therefore, there exists an increasing sequence of compact sets
%$(K_j)_{j\ge0}\subset\mathcal{S}_{\bot}^{\prime}(\mathbf{R}^4,\mathbf{C}^{K \times 3})$ such that for every compact $K\subset\mathcal{S}_{\bot}^{\prime}(\mathbf{R}^4,\mathbf{C}^{K \times 3})$
%we have $K\subset K_j$ for some $j\ge0$.
Let us drop the $\Lambda$ and $g$ to ease the notation. We first prove the claim for the case $V=0$. We have
\begin{equation}
\mu_t^{R}(A)=\frac{\xi_t^{R}(A)}{\xi_t^{R}(\mathcal{S}^{\prime})},\text{ where }\xi_t^{R}=\int_{\mathcal{S}^{\prime}} d\xi_t^{R}
\end{equation}
By construction (\ref{defxi}) and (\ref{xibound}) for any measurable $A$ the measure $\xi_t^{R}(A)$ is monotone increasing in $R$ and bounded from above:
\begin{equation}
\xi_t^{R}(A)\le\xi_t^{S}(A)\le\xi_t^{+\infty}(A)<+\infty\quad(R\le S).
\end{equation}
Therefore, for $R\le S$
\begin{equation}
\begin{split}
&\frac{\xi_t^{R}(A)}{\xi_t^{+\infty}(A)}\le\frac{\xi_t^{S}(A)}{\xi_t^{+\infty}(A)}\\
&\frac{\xi_t^{R}(A)}{\xi_t^{+\infty}(A)}\underbrace{\frac{\xi_t^{R}(\mathcal{S}^{\prime})}{\xi_t^{R}(\mathcal{S}^{\prime})}}_{=1}\le\frac{\xi_t^{S}(A)}{\xi_t^{+\infty}(A)}\underbrace{\frac{\xi_t^{R}(\mathcal{S}^{\prime})}{\xi_t^{R}(\mathcal{S}^{\prime})}}_{=1}\\
&\frac{\xi_t^{R}(\mathcal{S}^{\prime})}{\xi_t^{+\infty}(A)}\mu_t^{R}(A)\le \frac{\xi_t^{S}(\mathcal{S}^{\prime})}{\xi_t^{+\infty}(A)}\mu_t^{S}(A)\\
&\underbrace{\frac{\xi_t^{R}(\mathcal{S}^{\prime})}{\xi_t^{S}(\mathcal{S}^{\prime})}}_{\in]0,1]}\le \frac{\mu_t^{S}(A)}{\mu_t^{R}(A)}.
\end{split}
\end{equation}
and
\begin{equation}
\mu_t^R(A)\le\mu_t^S(A).
\end{equation}
We conclude that for any measurable $A$ and for any sequence $R_j\uparrow+\infty$ as $j\rightarrow+\infty$, the sequence $(\mu_t^{R_j}(A))
_{j\ge0}$ is monotone increasing.
By Theorem \ref{VHS} $\mu_t$ defines a probability measure on $\mathcal{S}^{\prime}$. The proof for $\mu$ goes analogously, because, by construction,
 it is the limit of the monotone increasing sequence $(\mu_{t_j})_{j\ge0}$ for $t_j\uparrow+\infty$ as $j\rightarrow+\infty$.\\
We know consider the general case, when $V$ does not vanish. We have
\begin{equation}
\begin{split}
&\mu_{t}^{\Lambda, R, g}(A)=\frac{\xi_t^{\Lambda, R, g}(A)}{\xi_t^{\Lambda, R, g}(\mathcal{S}^{\prime})},\text{ where }\\
&\xi_t^{\Lambda, R, g}(A):=\int_{A} \,\left[\exp\left(-\int_{|x|\le R}d^3x\int_{-\frac{t}{2}}^{+\frac{t}{2}}\,ds\,V^{\Lambda,g}(s,x,\mathbf{A})\right)\right]d\mu_t^{\Lambda,R,g}(\mathbf{A}).
\end{split}
\end{equation}
By construction for any measurable $A$ the measure $\xi_t^{\Lambda,R,g}(A)$ is monotone increasing in $R$ and bounded from above:
\begin{equation}
\xi_t^{\Lambda,R,g}(A)\le\xi_t^{\Lambda,S,g}(A)\le\xi_t^{\Lambda,+\infty,g}(A)<+\infty\quad(R\le S).
\end{equation}
Therefore, for $R\le S$
\begin{equation}
\begin{split}
&\frac{\xi_t^{\Lambda,R,g}(A)}{\xi_t^{\Lambda, +\infty, g}(A)}\le\frac{\xi_t^{\Lambda,S,g}(A)}{\xi_t^{\Lambda, +\infty, g}(A)}\\
&\frac{\xi_t^{\Lambda,R,g}(A)}{\xi_t^{\Lambda, +\infty, g}(A)}\underbrace{\frac{\xi_t^{\Lambda,R,g}(\mathcal{S}^{\prime})}{\xi_t^{\Lambda,R,g}(\mathcal{S}^{\prime})}}_{=1}\le\frac{\xi_t^{\Lambda,S,g}(A)}{\xi_t^{\Lambda, +\infty, g}(A)}\underbrace{\frac{\xi_t^{\Lambda,R,g}(\mathcal{S}^{\prime})}{\xi_t^{\Lambda,R,g}(\mathcal{S}^{\prime})}}_{=1}\\
&\frac{\xi_t^{\Lambda,R,g}(\mathcal{S}^{\prime})}{\xi_t^{\Lambda, +\infty, g}(A)}\mu_t^{\Lambda,R,g}(A)\le \frac{\xi_t^{\Lambda,S,g}(\mathcal{S}^{\prime})}{\xi_t^{\Lambda, +\infty, g}(A)}\mu_t^{\Lambda,S,g}(A)\\
&\underbrace{\frac{\xi_t^{\Lambda,R,g}(\mathcal{S}^{\prime})}{\xi_t^{\Lambda,S,g}(\mathcal{S}^{\prime})}}_{\in]0,1]}\le \frac{\mu_t^{\Lambda,S,g}(A)}{\mu_t^{\Lambda,R,g}(A)}.
\end{split}
\end{equation}
and
\begin{equation}
\mu_t^{\Lambda,R,g}(A)\le\mu_t^{\Lambda,S,g}(A).
\end{equation}
We conclude that for any measurable $A$ and for any sequence $R_j\uparrow+\infty$ as $j\rightarrow+\infty$, the sequence $(\mu_t^{\Lambda, R_j,g}(A))
_{j\ge0}$ is monotone increasing.
By Theorem \ref{VHS} $\mu_t^{\Lambda, g}$ defines a probability measure on $\mathcal{S}^{\prime}$. The proof for $\mu^{\Lambda,g}$ goes analogously, because, by construction,
 it is the limit of the monotone increasing sequence $(\mu_{t_j}^{\Lambda,g})_{j\ge0}$ for $t_j\uparrow+\infty$ as $j\rightarrow+\infty$.\\
The proof is completed.
%By construction the operator $H_0^g$ is the infinitesimal generator of the process  $\mathfrak{A}_t=\lim_{\Xi\rightarrow+\infty}\mathfrak{A}_t(\Xi,g)$
%on the Hilbert space $L^2(L^2_{\bot}(\mathbf{R}^3,\mathbf{R}^{3K}), d\zeta_\mathfrak{A})$, where $\zeta_\mathfrak{A}$ is the probability measure on
%$L^2_{\bot}(\mathbf{R}^3,\mathbf{R}^{3K})$ induced by the process $(\mathfrak{A}_t)_{t\ge0}$, as explained in Chapter V.1 in \cite{GiSk04}.
%The measure $\zeta_\mathfrak{A}$ pushes forward to a probability on $\mathcal{S}^{\prime}_{\bot}(\mathbf{R}^3,\mathbf{R}^{3K})$, so that we can see $H_0^g$ as
%an operator on $L^2(\mathcal{S}^{\prime}_{\bot}(\mathbf{R}^3,\mathbf{R}^{3K}), d\zeta_\mathfrak{A})$.\par
\end{proof}

\noindent By Fubini's theorem for distributions (cf. \cite{Tr06}), we can write the measures $\mu_t^{\Lambda, g}$  and $\mu^{\Lambda,g}$ on $\mathcal{S}^{\prime}_{\bot}(\mathbf{R}^4,\mathbf{C}^{K \times 3})$ as
\begin{equation}
\begin{split}
\mu_t^{\Lambda,g}(\mathbf{A}(s,x))&=\varrho^{\Lambda,g}_t(\mathbf{A}(\cdot,x))\nu^{\Lambda,g}(\mathbf{A}(s,\cdot))\\
\mu^{\Lambda,g}(\mathbf{A}(s,x))&=\varrho^{\Lambda,g}(\mathbf{A}(\cdot,x))\nu^{\Lambda,g}(\mathbf{A}(s,\cdot)),
\end{split}
\end{equation}
\noindent where $\varrho^{\Lambda, g}_t(\mathbf{A}(\cdot,x)):=\mu^{\Lambda, g}_t(\mathbf{A}(\cdot,x))$ and $\varrho^{\Lambda,g}(\mathbf{A}(\cdot,x)):=\mu^{\Lambda,g}(\mathbf{A}(\cdot,x))$
are probability measures on $\mathcal{S}^{\prime}_{\bot}(\mathbf{R}^1,\mathbf{C}^{K \times 3})$, and $\nu^{\Lambda, g}(\mathbf{A}(s,\cdot)):=\mu_t^{\Lambda, g}(\mathbf{A}(s,\cdot))=\mu^{\Lambda, g}(\mathbf{A}(s,\cdot))$ is a probability measure on
$\mathcal{S}^{\prime}_{\bot}(\mathbf{R}^3,\mathbf{C}^{K \times 3})$, respectively.
Remark that $\nu^{\Lambda,g}$ does not depend on $t$ by construction.\par
%The operator $H_0^g$ extends to the operator $H_{I}+H_{II}^{\Lambda,g}$ on $L^2(\mathcal{S}^{\prime}_{\bot}(\mathbf{R}^3,\mathbf{R}^{3K}), d\nu^{\Lambda,g})$.
% Hence,

\begin{theorem}[\textbf{Feynman-Ka\v{c}-Nelson Formula}]\label{FKNT}
The operator $H^{\Lambda,g}=H_{I}+H_{II}^{\Lambda,g}+V^{\Lambda,g}$ has a domain in the Hilbert space
$L^2(\mathcal{S}_{\bot}^{\prime}(\mathbf{R}^3,\mathbf{C}^{K \times 3}), d\nu^{\Lambda,g})$, has
fundamental state $\Omega_0^{\Lambda,g}=\Omega_0^{\Lambda,g}(\mathbf{A}(s,x))$, i.e. $H^{\Lambda,g}\Omega_0^{\Lambda,g}=0$, and satisfies the Feynman-Ka\v{c}-Nelson formula
\begin{equation}\label{repthm}
\begin{split}
&\left(
\Omega_0^{\Lambda,g},B_1e^{-(s_2-s_1)H^{\Lambda,g}}B_2e^{-(s_3-s_2)H^{\Lambda,g}}\cdot\dots\cdot
B_N\Omega_0^{\Lambda,g}
\right)^{\Lambda,g}=\\
&\qquad\qquad\qquad
=\int_{\mathcal{S}_{\bot}^{\prime}(\mathbf{R}^4,\mathbf{C}^{K \times 3})}\hspace{-0.5cm}\Pi_{k=1}^NB_k(\mathbf{A}(s_k,\cdot))d\mu^{\Lambda,g}(\mathbf{A}),
\end{split}
\end{equation}
\noindent where the scalar product $\left(\cdot,\cdot\right)^{\Lambda,g}$ on  $\mathcal{S}_{\bot}^{\prime}(\mathbf{R}^3,\mathbf{C}^{K \times 3})$ is defined as
\begin{equation}
\left(\Upsilon,\Theta\right)^{\Lambda,g}:=\int_{\mathcal{S}_{\bot}^{\prime}(\mathbf{R}^3,\mathbf{C}^{K \times 3})}\;\Upsilon(\mathbf{A})\bar{\Theta}(\mathbf{A})d\nu^{\Lambda,g}(\mathbf{A}),
\end{equation}
the functionals $(B_k=B_k(\mathbf{A}))_{k=1,\dots,N}$ are in
 $L^2(\mathcal{S}_{\bot}^{\prime}(\mathbf{R}^3,\mathbf{C}^{K \times 3}), \mathbf{C}, d\nu^{\Lambda,g})$,
 and $(s_k)_{k=1,\dots,N}$ is a partition of the interval $[\tau,T]$ defined as $s_k:=\tau+k\frac{T-\tau}{N}$
 for $\tau:=-\frac{t}{2}$ and $T:=+\frac{t}{2}$,.
\end{theorem}

\begin{proof}
%\begin{equation}
%\begin{split}
%&e^{-tH^{\Lambda,g}}\Psi(\mathbf{A})=\int_{\mathcal{S}_{\bot}^{\prime}(\mathbf{R}^3,\mathbf{C}^{K \times 3})}\mathcal{K}_t(\mathbf{A},\mathbf{A}^{\prime})\Psi(\mathbf{A}^{\prime})d\nu^{\Lambda,g}(\mathbf{A}^{\prime}),\text{where }\\
%%&\\
%&\mathcal{K}_t(\mathbf{A},\mathbf{A}^{\prime}):=\int_{\mathcal{S}_{\bot}^{\prime}(\mathbf{R}^3,\mathbf{C}^{K \times 3})\times\mathcal{S}_{\bot}^{\prime}(\mathbf{R}^3,\mathbf{C}^{K \times 3})}\left[\exp\left(-\int_{\mathbf{R}^3}d^3x\int_{-\frac{t}{2}}^{+\frac{t}{2}}V^{\Lambda,g}(s,x,\mathbf{A})\,ds\right)dU^{t,x,g}_{\mathbf{A},\mathbf{A}^{\prime}}\right],
%\end{split}
%\end{equation}
%\begin{equation}
%d\mu^{\Lambda,g}_t(\mathbf{A})=\int_{\mathbf{R}^3}d^3x\,d\mu^{\Lambda,g}_t(\mathbf{A}(\cdot,x)),
%\end{equation}
%and
We show now that the argument in the proof of Theorem 3.4.1 in \cite{GJ87} can be utilized this set up, so that
the limit of the integrand on the r.h.s. of (\ref{es_lambda_t}) for $t\rightarrow+\infty$ exists.
Recall from Theorem \ref{pgs} that the operator $H^{\Lambda,g}$ has
fundamental state $\Omega_0^{\Lambda,g}=\Omega_0^{\Lambda,g}(\mathbf{A}(s,x))$, i.e. $H^{\Lambda,g}\Omega_0^{\Lambda,g}=0$. The state $\Omega_0^{\Lambda,g}$ depends neither on $\Lambda$ nor on $g$.
Note that $s$ is seen as parameter.
By adapting the proof of Theorem 3.4.1 in \cite{GJ87}, we can write
\begin{equation}\label{repthm0}
\begin{split}
&\left(
\Omega_0^{\Lambda,g},B_1e^{-(s_2-s_1)H^{\Lambda,g}}B_2e^{-(s_3-s_2)H^{\Lambda,g}}\cdot\dots\cdot
B_N\Omega_0^{\Lambda,g}
\right)^{\Lambda,g}
=\\
&\qquad\qquad\qquad=\lim_{t\rightarrow+\infty}
\int_{\mathcal{S}_{\bot}^{\prime}(\mathbf{R}^4,\mathbf{C}^{K \times 3})}\hspace{-0.5cm}\Pi_{k=1}^NB_k(\mathbf{A}(s_k,\cdot))d\mu_t^{\Lambda,g}(\mathbf{A}).
\end{split}
\end{equation}

%\item the notation
%\begin{equation}
%\mathbf{A}(t,x):=\lim_{n\rightarrow +\infty}\mathbf{A}(\cdot,x)(\varphi^n_t),
%\end{equation}
%where $(\varphi^n_t)_{n\ge0}$ is a delta-sequence in $\mathcal{S}_{\bot}(\mathbf{R}^1,\mathbf{C}^{K \times 3})$,
%i.e. $\mathcal{S}^\prime-\lim_{n\rightarrow+\infty}\varphi^n_t=\delta_t$.
%is a practical shorthand. This point is not carried out explicitly in \cite{GJ87}.
\noindent
%We conclude that for the probability measure $\mu^{\Lambda,g}$ on $\mathcal{S}_{\bot}^{\prime}(\mathbf{R}^4,\mathbf{C}^{K \times 3})=\mathcal{E}$
%the equality
Since $\mu^{\Lambda,g}=\lim_{t\rightarrow+\infty}\mu^{\Lambda,g}_t$,
%holds true, and its Fourier transform, the
% functional $S^{\Lambda,g}(f)$ satisfies (i)-(iii) of Milnos' Theorem (Theorem \ref{Milnos}). Therefore,
we have
\begin{equation}
\begin{split}
&\left(
\Omega_0^{\Lambda,g},B_1e^{-(s_2-s_1)H^{\Lambda,g}}B_2e^{-(s_3-s_2)H^{\Lambda,g}}\cdot\dots\cdot
B_N\Omega_0^{\Lambda,g}
\right)^{\Lambda,g}
=\\
&\qquad\qquad\qquad=\int_{\mathcal{S}_{\bot}^{\prime}(\mathbf{R}^4,\mathbf{C}^{K \times 3})}\hspace{-0.5cm}\Pi_{k=1}^NB_k(\mathbf{A}(s_k,\cdot))d\mu^{\Lambda,g}(\mathbf{A}),
\end{split}
\end{equation}
with the same assumptions as for (\ref{repthm0}).\\
\end{proof}
\begin{rem}
Theorem \ref{FKNT} for the FKN formula on $3K$-dimensional distributions is consistent with Theorem 3.4.1 in \cite{GJ87} for the FKN formula on $1$-dimensional distributions.
\end{rem}

\begin{theorem}[\textbf{Ultraviolet Measure Properties}]\label{UVMPL} There exists a $g_0\in[0,1[$ not dependent on $\Lambda$, such that
the generating functional
\begin{equation}\label{es_lambda_r}
S^{\Lambda,g}(f)=\int_{\mathcal{S}_{\bot}^{\prime}(\mathbf{R}^4,\mathbf{C}^{K \times 3})}e^{\imath\mathbf{A}(f)}d\mu^{\Lambda,g}(\mathbf{A}),
\end{equation}
\noindent for $f\in\mathcal{S}_{\bot}(\mathbf{R}^4,\mathbf{C}^{K \times 3})$ satisfies
satisfies the Osterwalder-Schadrer axioms (OS0)-(OS4) and hence the Wightman axioms (W1)-(W8). Note that $S^{\Lambda,g}(f)$ and $A(f)$ are $K\times 3$ complex matrices, and that the exponential is meant componentwise.
%both Euclidean invariance (OS2) and reflection positivity (OS3) for $g\in[0,g_0[$.
\end{theorem}

\begin{proof}\text{}\\
Without loss of generality we can assume that $S^{\Lambda,g}(f)$ and $A(f)$ are complex numbers throughout this proof, because the general proof can be reconstructed by iterating over the components of the complex $K\times 3$ matrices representing them.\par
We first prove (OS2) and (OS3), which will be immediately needed to apply the reconstruction theorem of quantum mechanics.
(OS2): The invariance of the generating functional $S^{\Lambda,g}$ under time translation and time reflection follows directly from the
definition of $\mu^{\Lambda,g}$ in (\ref{UVMD}). More exactly, by (\ref{def_mu_Lambda_0}) the infrared/ultraviolet cutoff measure $\mu^{\Lambda,R, g,0}_{t}$
is invariant under the space rotations and reflections in O(3), and hence the ultraviolet cutoff measure in invariant under all space-time rotations
and reflections, as well as translations.\\
(OS3):
we have to show that the complex matrix $M^{\Lambda,g}:=[M^{\Lambda,g}_{i,j}]$, where
\begin{equation}\label{defM}
M^{\Lambda,g}_{i,j}:=S^{\Lambda,g}(\theta f_i-f_j),
\end{equation}
is positive definite for all choices of $(f_i)_{i=1,\dots,n}\subset\mathbf{S}(\mathbf{R}^4,\mathbf{R})$,
such that $\supp(f_i)\subset[0,+\infty[\times\mathbf{R}^3$,
and $(\theta f)(s,x):=f(-s,x)$ denotes the time reflection.\\
\noindent With the choice of functionals $B_k$ as
\begin{equation}
B_k(\mathbf{A}):=\exp\left(\frac{\imath}{N}\mathbf{A}\left(\theta(f_i)(s_k,\cdot)-f_j(s_k,\cdot)\right)\right),
\end{equation}
the r.h.s. of (\ref{repthm}) leads to
\begin{equation}\label{p1}
\lim_{N\rightarrow+\infty}\int_{\mathcal{S}_{\bot}^{\prime}(\mathbf{R}^4,\mathbf{C}^{K \times 3})}\hspace{-0.5cm}\Pi_{k=1}^NB_k(\mathbf{A}(s_k,\cdot))d\mu^{\Lambda,g}(\mathbf{A})=
\int_{\mathcal{S}_{\bot}^{\prime}(\mathbf{R}^4,\mathbf{C}^{K \times 3})}\hspace{-0.5cm}\exp\left(-\imath\mathbf{A}\left(\theta (f_i)-f_j\right)\right)d\mu^{\Lambda,g}(\mathbf{A}),
\end{equation}
when $\tau\rightarrow-\infty$ and $T\rightarrow+\infty$. We have
 approximated the time integration in $\mathbf{A}(f)$ by means of a Riemann sum for the partition $(s_k)_{k=1,\dots,N}$. For the l.h.s of (\ref{repthm}) we obtain
 in the limit
\begin{equation}\label{p2}
\begin{split}
&\lim_{N\rightarrow+\infty}\left(\Omega_0^{\Lambda,g},B_1e^{-(t_2-t_1)H^{\Lambda,g}}B_2e^{-(t_3-t_2)H^{\Lambda,g}}\dots B_N\Omega_0^{\Lambda,g}\right)^{\Lambda, g}
=\\
&\quad=\left(\Omega_0^{\Lambda,g},\exp\left(-\imath\mathbf{A}\left(\theta (f_i)-f_j\right)\right)\Omega_0^{\Lambda,g}\right)^{\Lambda, g}=\\
&\quad=\left( \exp\left(-\imath\mathbf{A}\left(\theta (f_i)\right)\right)\Omega_0^{\Lambda,g},\exp\left(-\imath\mathbf{A}\left(f_j\right)\right)\Omega_0^{\Lambda,g}\right)^{\Lambda, g}=\\
&\quad=\left(\exp\left(-\imath\mathbf{A}\left(f_i\right)\right)\Omega_0^{\Lambda,g},\exp\left(-\imath\mathbf{A}\left(f_j\right)\right)\Omega_0^{\Lambda,g}\right)^{\Lambda, g},
\end{split}
\end{equation}
because the supports of $f_i$ and $f_j$ lie in the time positive half space. Putting (\ref{repthm})  with (\ref{p1}) and (\ref{p2})
together shows that the matrix $[M^{\Lambda,g}_{i,j}]$ with entries
\begin{equation}\label{Fub}
M^{\Lambda,g}_{i,j}=\int_{\mathcal{S}_{\bot}^{\prime}(\mathbf{R}^4,\mathbf{C}^{K \times 3})}\hspace{-0.5cm}\exp\left(-\imath\mathbf{A}\left(\theta (f_i)-f_j\right)\right)d\mu^{\Lambda,g}(\mathbf{A})
=\left(\exp\left(-\imath\mathbf{A}\left(f_i\right)\right)\Omega_0^{\Lambda,g},\exp\left(-\imath\mathbf{A}\left(f_j\right)\right)\Omega_0^{\Lambda,g}\right)^{\Lambda, g}
\end{equation}
is positive definite for all $g\in[0,g_0[$, and, by Proposition \ref{posprop} (or Corollary 3.4.4
in \cite{GJ87}), the reflection positivity axiom (OS3) is fulfilled.\\
\end{proof}
\noindent We can now prove
\begin{theorem}\label{Selfadjointness}
There exists a $g_0\in[0,1[$ not depending on $\Lambda$, such that, if the coupling constant $g\in[0,g_0[$, for the
probability measure $\mu^{\Lambda,g}$ on $\mathcal{S}^{\prime}_{\bot}(\mathbf{R}^4,\mathbf{C}^{K \times 3})$
 the Hamilton operator $H^{\Lambda,g}$ is selfadjoint for the choice $\nu^{\Lambda,g}$.
If the coupling constant $g$ vanishes, both measures $\mu^{\Lambda,0}$ and $\nu^{\Lambda,0}$ are Gaussian, otherwise not.
\end{theorem}
\begin{proof}
We know that all the assumptions of Theorem \ref{rec} are satisfied, because we have already verified the Osterwalder-Schrader axioms (OS2) ans (OS3).
Hence, the time translation operator $T(t)$ satisfies
\begin{equation}\label{convH}
T(t)^{\curlywedge^{\Lambda,g}}=e^{-t\tilde{H}^{\Lambda,g}},
\end{equation}
where $\tilde{H}^{\Lambda,g}$ is a selfadjoint operator on the Hilbert space $L^2\left(\mathcal{S}_{\bot}^{\prime}(\mathbf{R}^3,\mathbf{C}^{K \times 3}), d\nu^{\Lambda,g}\right)$.
Note that the canonical embedding $\curlywedge$ is defined utilizing the measure $\mu^{\Lambda,g}$, which is highlighted by the superscripts $\Lambda$ and $g$ in the notation $\curlywedge^{\Lambda,g}$.
To conclude the proof we have to show that $H^{\Lambda,g}=\tilde{H}^{\Lambda,g}$. A slight reformulation of (\ref{repthm}) provides the equality
\begin{equation}\label{H_x_eq}
\left(\Omega_0^{\Lambda,g},\Psi e^{-sH^{\Lambda,g}}\Phi\Omega_0^{\Lambda,g}\right)^{\Lambda, g}=
\int_{\mathcal{S}_{\bot}^{\prime}(\mathbf{R}^4,\mathbf{C}^{K \times 3})}\Psi\left(\mathbf{A}(0,\cdot)\right)\Phi\left(\mathbf{A}(s,\cdot)\right)d\mu^{\Lambda,g}(\mathbf{A}),
\end{equation}
where:
\begin{itemize}
\item the distribution $\mathbf{A}(s,\cdot)\in\mathcal{S}_{\bot}^{\prime}(\mathbf{R}^3,\mathbf{C}^{K \times 3})$ depends on the parameter $s\in\mathbf{R}^1$,
\item the functionals $\Psi,\Phi\in \mathcal{D}(H^{\Lambda,g})\subset L^2(\mathcal{S}^{\prime}_{\bot}(\mathbf{R}^3,\mathbf{C}^{K \times 3}),\mathbf{C},d\nu^{\Lambda,g})$.
\end{itemize}
We now compute the r.h.s of equation (\ref{H_x_eq}) and obtain
\begin{equation}\label{FKNExtra}
\begin{split}
&\int_{\mathcal{S}_{\bot}^{\prime}(\mathbf{R}^4,\mathbf{C}^{K \times 3})}\Psi\left(\mathbf{A}(0,\cdot)\right)\Phi\left(\mathbf{A}(s,\cdot)\right)d\mu^{\Lambda,g}(\mathbf{A})=\\
&\quad=\int_{\mathcal{S}_{\bot}^{\prime}(\mathbf{R}^4,\mathbf{C}^{K \times 3})}\Psi\left(\mathbf{A}(0,\cdot)\right)\Phi\left(\mathbf{A}(s, \cdot)\right)d(\varrho^{\Lambda, g}\otimes \nu^{\Lambda,g})(\mathbf{A})=\\
&\quad=\int_{\mathcal{S}_{\bot}^{\prime}(\mathbf{R}^4,\mathbf{C}^{K \times 3})}\Psi\left(\mathbf{A}(0,\cdot)\right)\Phi\left(\mathbf{A}(s,\cdot)\right)d\nu^{\Lambda,g}(\mathbf{A}(t,\cdot))d\varrho^{\Lambda, g}(\mathbf{A}(\cdot,x))=\\
&\quad=\int_{\mathcal{S}_{\bot}^{\prime}(\mathbf{R}^1,\mathbf{C}^{K \times 3})}\left[\int_{\mathcal{S}_{\bot}^{\prime}(\mathbf{R}^3,\mathbf{C}^{K \times 3})}\Psi\left(\mathbf{A}(0,\cdot)\right)\Phi\left(e^{-s\tilde{H}^{\Lambda,g}}\mathbf{A}(0,\cdot)\right)d\nu^{\Lambda,g}(\mathbf{A}(t,\cdot))\right]d\varrho^{\Lambda, g}(\mathbf{A}(\cdot,x))=\\
&\quad=\left(\int_{\mathcal{S}_{\bot}^{\prime}(\mathbf{R}^3,\mathbf{C}^{K \times 3})}\Psi\left(\mathbf{A}(0,\cdot)\right)\Phi\left(e^{-s\tilde{H}^{\Lambda,g}}\mathbf{A}(0,\cdot)\right)d\nu^{\Lambda,g}(\mathbf{A})\right)\underbrace{\left(\int_{\mathcal{S}_{\bot}^{\prime}(\mathbf{R}^1,\mathbf{C}^{K \times 3})}d\varrho^{\Lambda, g}(\mathbf{A})\right)}_{=1}=\\
&\quad=\int_{\mathcal{S}_{\bot}^{\prime}(\mathbf{R}^3,\mathbf{C}^{K \times 3})}\Psi\left(\mathbf{A}\right)(e^{-s\tilde{H}^{\Lambda,g}})\Phi\left(\mathbf{A}\right)d\nu^{\Lambda,g}(\mathbf{A}),
\end{split}
\end{equation}
where we have utilized Fubini's theorem for distributions (cf. \cite{Tr06}), the fact that the integrand does not depend on the time $t$, and that $\int d\varrho^{\Lambda, g}=1$,
being $\varrho^{\Lambda, g}$ a probability measure.\\
Therefore,
\begin{equation}\label{H_x_eq_int}
\left(\Omega_0^{\Lambda,g},\bar{\Psi} e^{-sH^{\Lambda,g}}\Phi\Omega_0^{\Lambda,g}\right)^{\Lambda, g}=\int_{\mathcal{S}_{\bot}^{\prime}(\mathbf{R}^3,\mathbf{C}^{K \times 3})}\Psi\left(\mathbf{A}\right)\overline{e^{-s\tilde{H}^{\Lambda,g}}\Phi\left(\mathbf{A}\right)}d\nu^{\Lambda,g}(\mathbf{A})
\end{equation}
We now take the derivative $-\left.\frac{d}{ds}\right|_{s:=0}$ on both sides  of (\ref{H_x_eq_int}) and obtain
\begin{equation}\label{h-h}
\left(\Omega_0^{\Lambda,g},\bar{\Psi} H^{\Lambda,g}\Phi\Omega_0^{\Lambda,g}\right)^{\Lambda, g}
=\int_{\mathcal{S}_{\bot}^{\prime}(\mathbf{R}^3,\mathbf{C}^{K \times 3})}\Psi\left(\mathbf{A}\right)\overline{\tilde{H}^{\Lambda,g}\Phi\left(\mathbf{A}\right)}d\nu^{\Lambda,g}(\mathbf{A}).
\end{equation}
Equation (\ref{h-h}) holds for all $\Psi,\Phi\in\mathcal{D}(H^{\Lambda,g})$, and
 we conclude that $H^{\Lambda,g}=\tilde{H}^{\Lambda,g}$. The proof is completed.\\
\end{proof}
\noindent We can now verify the remaining Osterwalder-Schrader axioms.
\begin{proof}[\text{Proof of Theorem \ref{UVMPL}, continuation}]\text{}\\
(OS0): Let us consider a finite set of test functions $f_j\in\mathcal{S}_{\bot}(\mathbf{R}^4,\mathbf{C}^{K \times 3})$, $j=1,\dots, n$
and complex numbers $z:=(z_1,z_2,\dots,z_n)\in\mathbf{C}^n$, the complex partial derivative of $S^{\Lambda,g}\left(\sum_{j=1}^nz_jf_j\right)$ with respect to $z_i$ reads
\begin{equation}
\frac{\partial}{\partial z_i}\left[S^{\Lambda,g}\left(\sum_{j=1}^nz_jf_j\right)\right]=\int_{\mathcal{S}_{\bot}^{\prime}(\mathbf{R}^4,\mathbf{C}^{K \times 3})}\imath \mathbf{A}(f_i) e^{\imath\mathbf{A}\left(\sum_{j=1}^nz_jf_j\right)}d\mu^{\Lambda,g}(\mathbf{A}).
\end{equation}

\noindent By the Cauchy-Schwarz inequality
\begin{equation}\label{CS}
\hspace{-3.8mm}
\left|\frac{\partial}{\partial z_i}\left[S^{\Lambda,g}\left(\sum_{j=1}^nz_jf_j\right)\right]\right|^2\le
\int_{\mathcal{S}_{\bot}^{\prime}(\mathbf{R}^4,\mathbf{C}^{K \times 3})} |\mathbf{A}(f_i)|^2d\mu^{\Lambda,g}(\mathbf{A})
\int_{\mathcal{S}_{\bot}^{\prime}(\mathbf{R}^4,\mathbf{C}^{K \times 3})} |e^{\imath\mathbf{A}\left(\sum_{j=1}^nz_jf_j\right)}|^2d\mu^{\Lambda,g}(\mathbf{A})
\end{equation}
Since $\lim_{g\rightarrow0^+}\mu^{\Lambda,g}=\mu^{\Lambda,0}$, the positive constant
\begin{equation}
K^{\Lambda,g}=\left\|\frac{d\mu^{\Lambda,g}}{d\mu^{\Lambda,0}}\right\|_{L^{\infty}(\mathcal{S}_{\bot}^{\prime}(\mathbf{R}^4,\mathbf{C}^{K \times 3}))}
\end{equation}
is bounded, and for all $g$ small enough
\begin{equation}\label{inK}
\begin{split}
&\int_{\mathcal{S}_{\bot}^{\prime}(\mathbf{R}^4,\mathbf{C}^{K \times 3})} |\mathbf{A}(f)|^2d\mu^{\Lambda,g}(\mathbf{A})\le K^{\Lambda,g}\int_{\mathcal{S}_{\bot}^{\prime}(\mathbf{R}^4,\mathbf{C}^{K \times 3})} |\mathbf{A}(f)|^2d\mu^{\Lambda,0}(\mathbf{A})\\
&\int_{\mathcal{S}_{\bot}^{\prime}(\mathbf{R}^4,\mathbf{C}^{K \times 3})} |e^{\imath\mathbf{A}(f)}|^2d\mu^{\Lambda,g}(\mathbf{A})\le K^{\Lambda,g}\int_{\mathcal{S}_{\bot}^{\prime}(\mathbf{R}^4,\mathbf{C}^{K \times 3})} |e^{\imath\mathbf{A}(f)}|^2d\mu^{\Lambda,0}(\mathbf{A}).
\end{split}
\end{equation}
for all $f\in \mathcal{S}_{\bot}(\mathbf{R}^4,\mathbf{C}^{K \times 3})$.
Being $\mu^{\Lambda,0}$ a Gaussian measure, we obtain
\begin{equation}\label{expCov}
\int_{\mathcal{S}_{\bot}^{\prime}(\mathbf{R}^4,\mathbf{C}^{K \times 3})} e^{\imath \mathbf{A}(f)}d\mu^{\Lambda,0}(\mathbf{A})=e^{-\frac{1}{2}(f,C^{\Lambda}f)_{L^2(\mathbf{R}^4,\mathbf{C}^{K \times 3})}},
\end{equation}
where
\begin{equation}
C^{\Lambda}=\frac{1}{2}\bigoplus_{\substack{a=1,\dots,K\\
          i=1,2,3}}\left(\mathbb{1}_{\mathbf{R}^{4}}-\Delta_{\mathbf{R}^{4}}+\underbrace{2V^{\Lambda,0}-\mathbb{1}_{\mathbf{R}^{4}}}_{\ge0}\right)^{-1}
\le \frac{1}{2}\bigoplus_{\substack{a=1,\dots,K\\
          i=1,2,3}}\left(\mathbb{1}_{\mathbf{R}^{4}}-\Delta_{\mathbf{R}^{4}}\right)^{-1}=:C^{\emptyset},
\end{equation}
because the operator $V^{\Lambda,0}$ is non-negative.
By developing the exponential function in both sides of (\ref{expCov}) and equating the quadratic term in $f$, we obtain
\begin{equation}\label{squareCov}
\begin{split}
\int_{\mathcal{S}_{\bot}^{\prime}(\mathbf{R}^4,\mathbf{C}^{K \times 3})} |\mathbf{A}(f)|^2d\mu^{\Lambda,0}(\mathbf{A})&=
\left(f,C^{\Lambda}f\right)_{L^2(\mathbf{R}^4,\mathbf{C}^{K \times 3})}\le\left(f,C^{\emptyset}f\right)_{L^2(\mathbf{R}^4,\mathbf{C}^{K \times 3})}=\\
&=\|f\|_{H^{-1}(\mathbf{R}^4,\mathbf{C}^{K \times 3})}^2,
\end{split}
\end{equation}
where $H^{-1}$ is the Sobolev space with ``differentiability'' $-1$. By comparing all powers of $f$, we obtain
\begin{equation}\label{expCovD}
\int_{\mathcal{S}_{\bot}^{\prime}(\mathbf{R}^4,\mathbf{C}^{K \times 3})} |e^{\imath\mathbf{A}(f)}|^2d\mu^{\Lambda,0}(\mathbf{A})\le
\exp\left(\|f\|_{H^{-1}(\mathbf{R}^4,\mathbf{C}^{K \times 3})}^2\right),
\end{equation}
Inserting (\ref{squareCov}), (\ref{expCovD}) and (\ref{inK}) into (\ref{CS}) leads to
\begin{equation}\label{indiff}
\left|\frac{\partial}{\partial z_i}\left[S^{\Lambda,g}\left(\sum_{j=1}^nz_jf_j\right)\right]\right|^2\le
 \left(K^{\Lambda, g}\right)^2\|f_i\|_{H^{-1}(\mathbf{R}^4,\mathbf{C}^{K \times 3})}^2\exp\left(\|\sum_{j=1}^nz_jf_j\|_{H^{-1}(\mathbf{R}^4,\mathbf{C}^{K \times 3})}^2\right)<+\infty,
\end{equation}
because all $f_j$'s are in the Schwartz space and a fortiori in the Sobolev space $H^{-1}$. The analyticity for all $z\in\mathbf{C}^n$ is therefore proved.\\
(OS1): Since $S^{\Lambda,g}(zf)$ is analytic for all $f\in\mathcal{S}_{\bot}(\mathbf{R}^4,\mathbf{C}^{K \times 3})$ and all $z\in\mathbf{C}$
the mean value theorem of differentiation implies that it exists a $z^{\prime}\in\mathbf{C}$, $|z^{\prime}|\le|z|$ such that
\begin{equation}
\left|S^{\Lambda,g}(zf)-\underbrace{S^{\Lambda,g}(0)}_{=1}\right|\le\left|\frac{d}{d z_i}S^{\Lambda,g}(z^{\prime}f)\right| |z|.
\end{equation}
Therefore, utilizing (\ref{indiff}) we obtain
\begin{equation}
\left|S^{\Lambda,g}(zf)\right|\le 1+|z|K^{\Lambda, g}\|f\|_{H^{-1}(\mathbf{R}^4,\mathbf{C}^{K \times 3})}\exp\left(\frac{|z|^2}{2}\|f\|_{H^{-1}(\mathbf{R}^4,\mathbf{C}^{K \times 3})}^2\right)
\end{equation}
We choose $z:=1$
\begin{equation}\label{inKa}
\left|S^{\Lambda,g}(f)\right|\le 1+ K^{\Lambda, g}\|f\|_{H^{-1}(\mathbf{R}^4,\mathbf{C}^{K \times 3})}\exp\left(\frac{1}{2}\|f\|_{H^{-1}(\mathbf{R}^4,\mathbf{C}^{K \times 3})}^2\right),
\end{equation}
and study the function of the variable $\alpha\in[0,+\infty[$
\begin{equation}
\kappa(\alpha):=\frac{1+K^{\Lambda,g}\alpha\exp\left(\frac{1}{2}\alpha^2\right)}{\exp(L\alpha^2)},
\end{equation}
for a given constant $L>0$. If $L>\frac{1}{2}$, then
\begin{equation}
\lim_{\alpha\rightarrow+\infty}\kappa(\alpha)=0\text{ and }\lim_{\alpha\rightarrow 0^+}\kappa(\alpha)=1.
\end{equation}
Therefore, for any given $K^{\Lambda, g}>0$ there exists a $L^{\Lambda, g}>0$ such that for all $\alpha\in[0,+\infty[$
\begin{equation}\label{KL}
1+K^{\Lambda,g}\alpha\exp\left(\frac{1}{2}\alpha^2\right)\le\exp(L^{\Lambda, g}\alpha^2),
\end{equation}
which, utilized with $\alpha:=\|f\|_{H^{-1}(\mathbf{R}^4,\mathbf{C}^{K \times 3})}$ and inserted into (\ref{inKa}), leads to
\begin{equation}\label{OS1in}
\begin{split}
\left|S^{\Lambda,g}(f)\right|\le \exp(L^{\Lambda, g}\|f\|_{H^{-1}(\mathbf{R}^4,\mathbf{C}^{K \times 3})}^2)&\le\exp(L^{\Lambda, g}\|f\|_{H^{0}(\mathbf{R}^4,\mathbf{C}^{K \times 3})}^2)\le\\
&\le\exp(L^{\Lambda, g}(\|f\|_{L^1(\mathbf{R}^4,\mathbf{C}^{K \times 3})}+\|f\|_{L^2(\mathbf{R}^4,\mathbf{C}^{K \times 3})}^2)),
\end{split}
\end{equation}
because $\|f\|_{H^{-1}}\le\|f\|_{H^{0}}$ for all $f$, and $H^0(\mathbf{R}^4,\mathbf{C}^{K \times 3})=L^2(\mathbf{R}^4,\mathbf{C}^{K \times 3})$.\\
(OS4): We have to check that the Euclidean time translation subgroup, which by Theorem \ref{Selfadjointness} reads  $\{T(t)\}_{t\ge0}$,
where $T(t)^{\curlywedge^{\Lambda,g}}=e^{-tH^{\Lambda,g}}$, acts ergodically
on the measure space $(\mathcal{S}^{\prime}_{\bot}(\mathbf{R}^4, \mathbf{C}^{K \times 3}), d\mu^{\Lambda,g})$, or, equivalently (see \cite{GJ87}, Formula 19.7.1),
 that it satisfies the \textbf{cluster property}, i.e.
\begin{equation}\label{cluster}
\begin{split}
&\lim_{t\mapsto+\infty}\frac{1}{t}\int_0^t\left[\int_{\mathcal{S}^{\prime}_{\bot}(\mathbf{R}^4, \mathbf{C}^{K \times 3})}\Phi(\mathbf{A})T(s)\Psi(\mathbf{A})d\mu^{\Lambda,g}(\mathbf{A})\right]ds=\\
&\qquad\qquad=\int_{\mathcal{S}^{\prime}_{\bot}(\mathbf{R}^4, \mathbf{C}^{K \times 3})}\Phi(\mathbf{A})d\mu^{\Lambda,g}(\mathbf{A})\cdot\int_{\mathcal{S}^{\prime}_{\bot}(\mathbf{R}^4, \mathbf{C}^{K \times 3})}\Psi(\mathbf{A})d\mu^{\Lambda,g}(\mathbf{A}),
\end{split}
\end{equation}
for all $\Phi,\Psi\in L^1(\mathcal{S}^{\prime}_{\bot}(\mathbf{R}^4,\mathbf{C}^{K \times 3}), d\mu^{\Lambda,g})$.
In the proof of Theorem 19.7.1 in \cite{GJ87} the cluster property (\ref{cluster}) is shown to be equivalent with the uniqueness of the ground state.
Hence, we have to show that $\Omega_0^{\Lambda,g}$ is an eigenvector of $H^{\Lambda, g}$ with multiplicity $1$, which follows from Theorem 3.3.2 and 3.3.3 in \cite{GJ87},
because $A^{\Lambda,g}:=e^{-tH^{\Lambda, g}}$ has a strictly positive kernel, being $H^{\Lambda,g}=H_I^{\Lambda}+H_{II}^{\Lambda,g}+V^{\Lambda,g}$ self-adjoint
and $V^{\Lambda,g}$ bounded from below by $0$ by construction.
\end{proof}
\subsection{Ultraviolet Cutoff Removal without Renormalization}
Now we remove the regularization given by the ultraviolet cutoff by letting $\Lambda\rightarrow+\infty$ and making sure that the properties (OS2) and (OS3),
necessary for the reconstruction theorem of quantum mechanics, are maintained. Actually, all Osterwalder-Schrader axioms will be preserved.
\begin{defi}[\textbf{$\mathbf{4}$D-YM-Measure}]
For any measurable $A\subset\mathcal{S}_{\bot}^{\prime}(\mathbf{R}^4,\mathbf{C}^{K \times 3})$ let
\begin{equation}\label{UVO}
\mu^{g}(A):=\limsup_{\Lambda\rightarrow+\infty}\mu^{\Lambda,g}(A).
\end{equation}
\end{defi}
\noindent Is $\mu^g$ a measure? The answer is yes and requires several steps.
\begin{defi}[\textbf{Tightness}] Let $(X,\mathcal{B}(X))$ be a measurable topological space. The collection of probability measures $\mathcal{M}$ over $X$ is \textbf{tight}
if and only if for every $\varepsilon>0$ there exists a compact $K_{\varepsilon}\subset X$ such that $\mu(K_{\varepsilon})>1-\varepsilon$ for all $\mu\in\mathcal{M}$ .
\end{defi}
\noindent In f.i. \cite{Bo06} we can find the proof of
\begin{theorem}[\textbf{Prokhorov}] Let $X$ be  a separable metric space, and $\mathcal{P}(X)$ the collection of all probability measures defined on
$X$ with its Borel $\sigma$-algebra. A family of probability measures $\mathcal{M}\subset\mathcal{P}(X)$ is tight if and only
if $\overline{\mathcal{M}}$  is weakly sequentially compact. i.e. if every sequence $(\mu_j)_{j\ge0}\subset\mathcal{M}$ contains a subsequence
$(\mu_{j_k})_{k\ge0}$ weakly converging to a $\mu\in\overline{\mathcal{M}}$, meaning by this
\begin{equation}
\lim_{k\rightarrow+\infty}\mathbb{E}^{\mu_{j_k}}[f]=\mathbb{E}^{\mu}[f]
\end{equation}
for all bounded continuous functions $f$ on $X$.
\end{theorem}
\noindent In spite of the fact that $\mathcal{S}^{\prime}$ is not metrizable, Theorem I.6.5 in \cite{Fe67} implies
\begin{theorem}[\textbf{Fernique}]\label{Fer}
Prokhorov's theorem holds true for $\mathcal{S}^{\prime}$.
\end{theorem}
%\noindent Putting everything together we obtain
\begin{proposition}\label{tightProp}
Let $(X,\mathcal{A},P)$ be a probability space and let $F_{\Lambda}:X\rightarrow[0,+\infty[$ be a family of uniformly bounded measurable functions indexed
by the parameter $\Lambda\ge0$. Then, the collection of probability measures $(P_{\Lambda})_{\Lambda\ge0}$ over $X$ defined for any measurable $A\in\mathcal{A}$
\begin{equation}
P_{\Lambda}(A):= \frac{\int_A F_{\Lambda}(x) dP }{\int_X F_{\Lambda}(x)  dP}.
\end{equation}
is tight.
\end{proposition}

\begin{proof}
We need to show that, for any $\varepsilon > 0$, there exists a compact set $K=K_{\varepsilon}\subset X$ such that
$P_{\Lambda}(K) > 1 - \varepsilon$ for sufficiently large values of $\Lambda$.
First, note that since $F_{\Lambda}(x)$ is uniformly bounded, there exists a constant $M > 0$ such that $F_{\Lambda}(x)\le M$ for all $x\in X$ and all $\Lambda\ge0$.
Then, for any measurable set $A$ in $\mathcal{A}$, we have
\begin{equation}
  P_{\Lambda}(A) = \frac{\int_A F_{\Lambda} dP}{\int_X F_{\Lambda}dP}\le\frac{MP(A)}{\int_X F_{\Lambda}dP}.
\end{equation}
For any $\varepsilon>0$, we can choose a compact set $K=K_{\varepsilon}$ such that
\begin{equation}
  P(X\setminus K) \le \varepsilon.
\end{equation}
Hence we have
\begin{equation}\label{K1}
  P_{\Lambda}(K) = 1 - P_{\Lambda}(X\setminus K) \ge 1-\frac{MP(X\setminus K)}{\int_X F_{\Lambda}dP}=1-\frac{MP(X\setminus K)}{\Lambda\int_X \frac{F_{\Lambda}}{\Lambda}dP}.
\end{equation}
Since $F_{\Lambda}(x)\le M$ for all $x\in X$ and all $\Lambda\ge0$, it follows by Lebesgue's dominated convergence
\begin{equation}
\lim_{\Lambda\rightarrow+\infty}\int_X \frac{F_{\Lambda}}{\Lambda}dP=0,
\end{equation}
and thus
\begin{equation}\label{K2}
\int_X \frac{F_{\Lambda}}{\Lambda}dP\in[0,1[ \text{ for } \Lambda \text{ big enough}.
\end{equation}
Inserting (\ref{K2}) into (\ref{K1}) leads to
\begin{equation}
  P_{\Lambda}(K)\ge 1 - \frac{M\varepsilon}{\Lambda}.
\end{equation}
Therefore, the family $(P_{\Lambda})_{\Lambda\ge0}$ is tight since, for any
$\varepsilon > 0$, we can find a compact set $K=K_{\epsilon}\subset X$ such that $P_{\Lambda}(K) > 1 - \varepsilon$ for $\Lambda\ge\max(M,\lambda_0)$.
The proof is finished.\\
\end{proof}
\noindent Putting everything together we obtain
\begin{proposition}
The expression $\mu^{g}$  in (\ref{UVO}) defines for all  $g\in[0,g_0[$ for a $g_0$ small enough
a probability measures on $\mathcal{S}_{\bot}^{\prime}(\mathbf{R}^4,\mathbf{C}^{K \times 3})$.
\end{proposition}
\begin{proof}
By (\ref{def_mu_Lambda_0}) and (\ref{UVMD}) we have
\begin{equation}\label{def_mu_Lambda_0_bis}
\begin{split}
Z_{t}^{V=0, R,g}&=\int_{\mathcal{S}_{\bot}^{\prime}(\mathbf{R}^4,\mathbf{C}^{K \times 3})}\,d\xi^{R,g}_{t}\\
d\mu^{V=0, g}&=\lim_{\substack{R\rightarrow+\infty\\t\rightarrow+\infty}}\frac{1}{Z_{t}^{V=0, R,g}}d\xi^{R,g}_{t},
\end{split}
\end{equation}
and by (\ref{def_mu_Lambda}) and (\ref{UVMD}) we have
\begin{equation}\label{def_mu_LambdaBis}
\begin{split}
Z_{t}^{\Lambda,R, g}&=\int_{\mathcal{S}_{\bot}^{\prime}(\mathbf{R}^4,\mathbf{C}^{K \times 3})}\,\left[\exp\left(-\int_{|x|\le R}d^3x\int_{-\frac{t}{2}}^{+\frac{t}{2}}\,ds\,V^{\Lambda,g}(s,x,\mathbf{A})\right)\right]d\mu^{R,g}_t\\
d\mu^{\Lambda, g}&=\underbrace{\lim_{\substack{R\rightarrow+\infty\\t\rightarrow+\infty}}\left[\frac{1}{Z_{t}^{\Lambda,R, g}}\left[\exp\left(-\int_{|x|\le R}d^3x\int_{-\frac{t}{2}}^{+\frac{t}{2}}\,ds\,V^{\Lambda,g}(s,x,\mathbf{A})\right)\right]\right]}_{=:F_{\Lambda}(\mathbf{A})}d\mu^{V=0,g}.
\end{split}
\end{equation}
Now, in (\ref{def_mu_LambdaBis}) we have a fraction whose numerator and denominator both depend on $\Lambda$.
The denominator is $Z_{t}^{\Lambda,R, g}$.
Numerator and denominator depend on $t$ and $R$, too, but, as we saw in the proof of Proposition \ref{muLProb}, the limit of the quotient for
$R,t\rightarrow+\infty$ is well-defined and finite.
By Definition \ref{moll} and (\ref{H-V}) $V^{\Lambda,g}(t,x,\mathbf{A})$ is a fourth degree polynomial in $\Lambda$.
By (\ref{Vconv}) the potential  $V^{\Lambda,g}(t,x,\mathbf{A})\ge0$ for $\Lambda$ big enough. Therefore,
\begin{itemize}
\item  $\frac{\text{numerator}}{\Lambda^4}$ converges to a strictly positive constant (depending on $\mathbf{A}$) for $\Lambda\rightarrow+\infty$,
\item  $\frac{\text{denominator}}{\Lambda^4}$ converges to strictly positive constant for $\Lambda\rightarrow+\infty$,
\item the quotient   $\frac{\text{numerator}}{\text{denominator}}$ converges to strictly positive constant (depending on $\mathbf{A}$) for $\Lambda\rightarrow+\infty$
\end{itemize}
Hence $F_{\Lambda}(\mathbf{A})$ is bounded in $\Lambda$.
Since the $V^{\Lambda,g}(t,x,\mathbf{A})\ge0$ for $\Lambda$ big enough, the numerator is smaller than 1 for all $\mathbf{A}\in\mathcal{S}_{\bot}^{\prime}(\mathbf{R}^4,\mathbf{C}^{K \times 3})$ and for $\Lambda$ big enough.
Therefore  $F_{\Lambda}(\mathbf{A})$ is bounded in both $\Lambda$ and $\mathbf{A}$,
and the family $(F_{\Lambda})_{\Lambda\ge0}$ is uniformly bounded on $\mathcal{S}_{\bot}^{\prime}(\mathbf{R}^4,\mathbf{C}^{K \times 3})$.
By Proposition \ref{tightProp} the family of probabilities $(\mu^{\Lambda, g})_{\Lambda\ge0}$ is tight.
By Theorem \ref{Fer} the family $(\mu^{\Lambda, g})_{\Lambda\ge0}$ has a weakly convergent subsequence to a probability,
which by (\ref{UVO}) is $\mu^g$. The proof is finished.\\
\end{proof}
We studied the original renormalization and ultraviolet cutoff removal techniques for the $\varphi^4_3$ introduced by Glimm-Jaffe (\cite{GJ73}) for finite volume
and extended by Feldman-Osterwalder (\cite{FO76}) and Magnen-S\'{e}n\'{e}or (\cite{MS76}) for infinite volume and small positive values of
the coupling constant using small cluster expansion methods. Finally the work of Seiler-Simon (\cite{SeSi76}) allowed to extend the existence result any positive value of the coupling constant
(this is claimed in \cite{GJ87}) even though we could not find a clear statement in Seiler-Simon’s paper). One notices that the model parameter have been
made ultraviolet cutoff level dependent in order to produce counter terms which eliminate divergences in the integrals; by mean of an a-priori estimate on
the Schwinger functions the Osterwalder-Schrader axioms are then inferred. Similarly, a renormalization involving the bare coupling constant is needed for $\Phi^4_4$ in
\cite{GK85} and \cite{FMRS87}.  With our definition of the ultraviolet measure the situation is different,
because we have no divergences to compensate, and the bare coupling constant $g$ must not be made dependent on the ultraviolet cutoff level $\Lambda$.
Later we will see that the running of the coupling constant guarantees asymptotic freedom of the Yang-Mills model.\par
In Magnen-Rivasseau-S\'{e}n\'{e}or's construction of a Yang-Mills measure in four dimensions (\cite{MRS93}) only the ultraviolet but not the infrared cutoff is removed,
while maintaining gauge invariance. In their construction the ultraviolet cutoff is implemented as a multiplication of the fields on the momentum space with the regularization
of the characteristic function of a domain converging towards $\mathbf{R}^3$ as the cutoff parameter tends to $+\infty$.
That way they create a divergence which they compensate by renormalization and running the bare coupling constant
to obtain asymptotic freedom as in Chapter III.5 of (\cite{Riv91}). An important difference to our model is that in the present construction the ultraviolet cutoff is implemented
as an application of the fields seen as distributions on the position space to a delta sequence in $\mathbf{R}^3$ for fixed time $t$ with respect to the cutoff parameter.
This way the problem of the non-existence of products of tempered distributions is circumvented, and no divergenges appear in the limit for the cutoff parameter tending to $+\infty$.
Moreover, once the infrared cutoff in the Magnen-Rivasseau-S\'{e}n\'{e}or model is removed,  the mass gap is killed, as we will see in Subsection \ref{AF},
because the limit of the renormalized coupling constant vanishes.

\begin{corollary}[\textbf{4D-YM-Measure Properties}]\label{cYMM} There exists a $g_0\in[0,1[$, such that for all $g\in[0,g_0[$
the generating functional
\begin{equation}\label{es_lambda}
S^{g}(f)=\int_{\mathcal{S}_{\bot}^{\prime}(\mathbf{R}^4,\mathbf{C}^{K \times 3})}e^{\imath\mathbf{A}(f)}d\mu^{g}(\mathbf{A}),
\end{equation}
\noindent for $f\in\mathcal{S}_{\bot}(\mathbf{R}^4,\mathbf{C}^{K \times 3})$ satisfies
the Osterwalder-Schadrer axioms (OS0)-(OS4) and hence the Wightman axioms (W1)-(W8). Note that $S^{\Lambda,g}(f)$ and $A(f)$ are $K\times 3$ complex matrices, and that the exponential is meant componentwise.
%both Euclidean invariance (OS2) and reflection positivity (OS3) for $g\in[0,g_0[$.
\end{corollary}
\begin{proof}\text{}\\
Without loss of generality we can assume that $S^{\Lambda,g}(f)$ and $A(f)$ are complex numbers throughout this proof, because the general proof can be reconstructed by iterating over the components of the complex $K\times 3$ matrices representing them.\par
First we prove that
\begin{equation}
S^{\Lambda,g}(f)\rightarrow S^{g}(f)\quad(\Lambda\rightarrow+\infty)
\end{equation}
locally uniformly in $f\in\mathcal{S}_{\bot}(\mathbf{R}^4,\mathbf{C}^{K \times 3})$. We have
\begin{equation}
\begin{split}
S^{g}(f)&=\int_{\mathcal{S}_{\bot}^{\prime}(\mathbf{R}^4,\mathbf{C}^{K \times 3})}e^{\imath\mathbf{A}(f)}d\mu^{g}(\mathbf{A})\\
S^{\Lambda, g}(f)&=\int_{\mathcal{S}_{\bot}^{\prime}(\mathbf{R}^4,\mathbf{C}^{K \times 3})}e^{\imath\mathbf{A}(f)}\frac{d\mu^{\Lambda,g}}{d\mu^{g}}d\mu^{g}(\mathbf{A}),
\end{split}
\end{equation}
and hence
\begin{equation}\label{inSS}
\begin{split}
&\left|S^{\Lambda, g}(f)-S^{ g}(f)\right|^2=\\
&= \left|\int_{\mathcal{S}_{\bot}^{\prime}(\mathbf{R}^4,\mathbf{C}^{K \times 3})}e^{\imath\mathbf{A}(f)}\left(1-\frac{d\mu^{\Lambda,g}}{d\mu^{g}}\right)d\mu^{g}(\mathbf{A})\right|^2\le\\ &\le \int_{\mathcal{S}_{\bot}^{\prime}(\mathbf{R}^4,\mathbf{C}^{K \times 3})}\left|e^{\imath\mathbf{A}(f)}\right|^2d\mu^g(\mathbf{A})
        \int_{\mathcal{S}_{\bot}^{\prime}(\mathbf{R}^4,\mathbf{C}^{K \times 3})}\left|1-\frac{d\mu^{\Lambda,g}}{d\mu^{g}}\right|^2d\mu^g(\mathbf{A})\le\\
        &\le\exp(2L^{\Lambda, g}(\|f\|_{L^1(\mathbf{R}^4,\mathbf{C}^{K \times 3})}+\|f\|_{L^2(\mathbf{R}^4,\mathbf{C}^{K \times 3})}^2))
        \underbrace{\int_{\mathcal{S}_{\bot}^{\prime}(\mathbf{R}^4,\mathbf{C}^{K \times 3})}\left|1-\frac{d\mu^{\Lambda,g}}{d\mu^{g}}\right|^2d\mu^g(\mathbf{A})}_{\rightarrow0\quad(\Lambda\rightarrow+\infty)} \end{split}
\end{equation}
As we saw in (\ref{KL}) the constant $L^{\Lambda, g}$  is bounded in $\Lambda$ if and only if the positive constant
\begin{equation}
K^{\Lambda,g}=\left\|\frac{d\mu^{\Lambda,g}}{d\mu^{\Lambda,0}}\right\|_{L^{\infty}(\mathcal{S}_{\bot}^{\prime}(\mathbf{R}^4,\mathbf{C}^{K \times 3}))}
\end{equation}
is bounded in $\Lambda$. By Definition \ref{moll} and (\ref{H-V}) we have
\begin{equation}
\frac{V^{\Lambda,g}}{\Lambda^4}=O_g(1)\quad(\Lambda\rightarrow+\infty).
\end{equation}
By Definitions \ref{IUCutoffM} and \ref{UCutoffM} it follows that
\begin{equation}
\mu^{\Lambda,g}=O_g(1)\quad(\Lambda\rightarrow+\infty)
\end{equation}
and hence
\begin{equation}
K^{\Lambda,g}=O_g(1)\quad(\Lambda\rightarrow+\infty)
\end{equation}
as we needed for inequality (\ref{inSS}) to prove the $f$-locally uniform convergence of $S^{\Lambda, g}(f)$ towards $S^{g}(f)$ for $\Lambda\rightarrow+\infty$.\par
The Osterwalder-Schadrer axioms (OS0)-(OS4) for $\mu^g$ follow now from the proof of Theorem \ref{UVMPL} because we have proved that
the constants occurring in the inequalities for (OS0) (\ref{indiff}) and (OS1) (\ref{OS1in}) are bounded in $\Lambda$; the invariance (OS2) holds true for $\mu^g$
because it does for $\mu^{A,g}$ for all $\Lambda\ge0$; the reflexion positivity (OS3) holds true because $[M^{\Lambda,g}_{i,j}]$ defined in (\ref{Fub})
is positive definite for all $\Lambda\ge0$; the ergodicity property (OS4) is fulfilled,
because the cluster property (\ref{cluster}) holds true for all $\Lambda\ge0$, and, therefore in the limit for $\Lambda\rightarrow+\infty$. The proof is completed.\\
\end{proof}
\noindent By Fubini's theorem for distributions (cf. \cite{Tr06}), we can write the measure $\mu^{g}$ on $\mathcal{S}^{\prime}_{\bot}(\mathbf{R}^4,\mathbf{C}^{K \times 3})$ as
\begin{equation}
\mu^{g}(\mathbf{A}(s,x))=\varrho^{g}(\mathbf{A}(\cdot,x))\nu^{g}(\mathbf{A}(s,\cdot)),
\end{equation}
where $\varrho^{g}_t(\mathbf{A}(\cdot,x)):=\mu^{\Lambda, g}_t(\mathbf{A}(\cdot,x))$ and
$\nu^{g}(\mathbf{A}(s,\cdot)):=\mu^{ g}(\mathbf{A}(s,\cdot))$ are probability measures on
$\mathcal{S}^{\prime}_{\bot}(\mathbf{R}^1,\mathbf{C}^{K \times 3})$, and $\mathcal{S}^{\prime}_{\bot}(\mathbf{R}^3,\mathbf{C}^{K \times 3})$, respectively.
\begin{corollary}[\textbf{Ultraviolet Cutoff Removal}]\label{CorCutOffRemoval}
There exists a $g_0\in]0,1[$ such that, if the bare coupling constant $g\in[0,g_0[$, then,
for any choice of the regularizing mollifier, the probability measures $\mu^{\Lambda,g}$ and $\nu^{\Lambda,g}$
converge for $\Lambda\rightarrow+\infty$ to the probability measures $\mu^g$ on
$\mathcal{S}^{\prime}_{\bot}(\mathbf{R}^4,\mathbf{C}^{K \times 3})$,
and $\nu^g$ on $\mathcal{S}^{\prime}_{\bot}(\mathbf{R}^3,\mathbf{C}^{K \times 3})$.
The regularized Hamiltonian $H^{\Lambda,g}$  converges pointwise on a dense domain to a selfadjoint non negative operator $H^g$ on
$L^2(\mathcal{S}^{\prime}_{\bot}(\mathbf{R}^3,\mathbf{C}^{K \times 3}),\mathbf{C},d\nu^g)$.
If the coupling constant $g$ vanishes, both measures $\mu^0$ and $\nu^0$ are Gaussian, otherwise not.
The domain of definition is
\begin{equation}
\mathcal{D}(H^g):=\left\{\Psi\in L^2(\mathcal{S}^{\prime}_{\bot}(\mathbf{R}^3,\mathbf{R}^{K\times3}),\mathbf{C},d\nu^g)\left|\,H\Psi\in L^2(\mathcal{S}^{\prime}_{\bot}(\mathbf{R}^3,\mathbf{R}^{K\times3}),\mathbf{C},d\nu^g)\right.\right\}.
\end{equation}
Moreover, the operator $H^g$ can be decomposed on $\mathcal{D}(H^g)\cap L^2(L^2_{\bot}(\mathbf{R}^3,\mathbf{R}^{K\times3}),\mathbf{C},d\nu^g)$ as
\begin{equation}
H^g=H_I+H_{II}^g+V^g-V^g_0,
\end{equation}
where
\begin{equation}
\begin{split}
H_{I}&=-\frac{1}{2}\int_{\mathbf{R}^3}d^3x\left[\frac{\delta}{\delta A_i^a(t,x)}\right]^2\\
&\\
H_{II}^g&=-\frac{g^2}{2}\int_{\mathbf{R}^3}d^3x\left[\int_{\mathbf{R}^3}d^3y\,\partial_iG^{a,b}(\mathbf{A}(t,y);x,y)\varepsilon^{b,c,d}A_k^d(t,y)\frac{\delta}{\delta A_k^c(t,y)}\right]^2\\
&\\
V^g&=\int_{\mathbf{R}^3}d^3 x\,|R^{\nabla^\mathbf{A}}(t,x)|^2
\end{split}
\end{equation}
for $\mathbf{A}\in L^2_{\bot}(\mathbf{R}^3,\mathbf{C}^{K \times 3},d^3x)$, where $V^{g}_0$ is a real constant which will be chosen later so that
the ground state $\Omega_0^g$ satisfies
\begin{equation}\label{gswl}
H^g\Omega_0^g=0.
\end{equation}
\end{corollary}
\begin{proof}
By Corollary \ref{cYMM} the 4D-YM-Measure $\mu^g$ satisfies the Osterwalder-Schrader axioms for $g\in[0,g_0[$.
By Theorem \ref{rec} we  can reconstruct a selfadjoint operator $\tilde{H}^g$ on
$L^2(\mathcal{S}^{\prime}_{\bot}(\mathbf{R}^3,\mathbf{C}^{K\times3}),\mathbf{C},d\nu^g)$,
and, as in the proof of Theorem \ref{Selfadjointness} from (\ref{h-h}) it follows that
\begin{equation}\label{conv_H_tilde}
\lim_{\Lambda\rightarrow+\infty}H^{\Lambda,g}=\lim_{\Lambda\rightarrow+\infty}\tilde{H}^{\Lambda,g}=\tilde{H}^g,
\end{equation}
\noindent where the pointwise convergence on the projective limit $\bigcap_{\Lambda\ge0}\mathcal{D}(H^{\Lambda,g})$ is meant.\\
For $\mathbf{A}\in L^2_{\bot}(\mathbf{R}^3,\mathbf{C}^{K \times 3},d^3x)$ and $(t,x)\in\mathbf{R}^4$ fixed we saw in (\ref{Vconv}) that
%\begin{equation}
%\begin{split}
%&V^{\Lambda,g}(t,x,\mathbf{A})=\\
%&=\frac{1}{16}\varepsilon_i^{j,k}\varepsilon_i^{p,q}\left\{[\partial_jA^a_k(\varphi^{\Lambda}_t(\cdot-x))-\partial_kA_j^a(\varphi^{\Lambda}_t(\cdot-x))+\right.
%g\varepsilon^{a,b,c}A_j^b(\varphi^{\Lambda}_t(\cdot-x))A_k^c(\varphi^{\Lambda}_t(\cdot-x))]\\
%&\quad[\partial_pA^a_q(\varphi^{\Lambda}_t(\cdot-x))-\partial_qA_p^a(\varphi^{\Lambda}_t(\cdot-x))
%\left.+g\varepsilon^{a,b,c}A_p^b(\varphi^{\Lambda}_t(\cdot-x))A_q^c(\varphi^{\Lambda}_t(\cdot-x))]\right\}-V^{\Lambda,g}(t,x,\mathbf{A})\\
%&\longrightarrow\frac{1}{16}\varepsilon_i^{j,k}\varepsilon_i^{p,q}\left\{[\partial_jA^a_k(t,x)-\partial_kA_j^a(t,x)+\right.
%g\varepsilon^{a,b,c}A_j^b(t,x)A_k^c(t,x)]\\
%&\quad[\partial_pA^a_q(t,x)-\partial_qA_p^a(t,x)
%\left.+g\varepsilon^{a,b,c}A_p^b(t,x)A_q^c(t,x)]\right\}=:V^g(t,x,\mathbf{A})\quad(\Lambda\rightarrow +\infty),
%\end{split}
%\end{equation}
%Hence, we obtain
\begin{equation}
\lim_{\Lambda\rightarrow+\infty}V^{\Lambda,g}(t,x,\mathbf{A}) = |R^{\nabla^\mathbf{A}}(t,x)|^2
\end{equation}
\noindent pointwise, and thus
\begin{equation}
\lim_{\Lambda\rightarrow+\infty}\int_{\mathbf{R}^3}d^3 x\,V^{\Lambda,g}(t,x,\mathbf{A})
= \int_{\mathbf{R}^3}d^3 x\,|R^{\nabla^\mathbf{A}}(t,x)|^2=:V^g.
\end{equation}
By taking the limit on both side for the equation with (\ref{conv_H_tilde}) with $V^{\Lambda,g}_0$ chosen as in Theorem \ref{pgs}
\begin{equation}
H^{\Lambda,g}=H_I^{\Lambda}+H_{II}^{\Lambda,g}+V^{\Lambda,g}-V^{\Lambda,g}_0
\end{equation}
leads to
\begin{equation}
H^g=H_I+H_{II}^g+V^g-V^g_0= \tilde{H}^g,
\end{equation}
with the definitions  for $\mathbf{A}\in L^2_{\bot}(\mathbf{R}^3,\mathbf{C}^{K \times 3},d^3x)$
\begin{equation}
\begin{split}
H_{I}&:=-\frac{1}{2}\int_{\mathbf{R}^3}d^3x\left[\frac{\delta}{\delta A_i^a(t,x)}\right]^2\\
&\\
H_{II}^g:&=-\frac{g^2}{2}\int_{\mathbf{R}^3}d^3x\left[\int_{\mathbf{R}^3}d^3y\,\partial_iG^{a,b}(\mathbf{A}(t,y);x,y)\varepsilon^{b,c,d}A_k^d(t,y)\frac{\delta}{\delta A_k^c(t,y)}\right]^2.
\end{split}
\end{equation}
Hence, $H^g$ is selfadjoint, because $\tilde{H}^g$ is, and the property (\ref{gswl}) of the ground state $\Omega_0^g$ follows from Theorem \ref{Gross}.
The proof is completed.\\
\end{proof}
\begin{rem}
Note that $H$ is selfadjoint for all choices of the coupling constant $g\in[0,g_0[$.
As we will see, the non vanishing of the $g$-contribution in $H$ is essential for the proof of the existence of a positive mass gap.
\end{rem}
\subsection{Gauge Invariance}
We want to prove that the construction of the Hamiltonian in Subsection \ref{quant} is gauge invariant. That for we show that,
if we repeat the construction for a principal fibre bundle subject to a gauge transformation preserving the Coulomb gauge, we obtain an Hamiltonian which is unitary equivalent with the
original one and has, in particular, the same spectrum.
\begin{defi}[\textbf{Gauge Transformation}] Let $P$ be a principal fibre bundle over a manifold $M$ and $\pi:P\rightarrow M$ be the projection.
An automorphism of $P$ is a diffeomorphism $f:P\rightarrow P$ such that $f(pg)=f(p)g$ for all $g\in G$, $p\in P$. A \textbf{gauge transformation} of $P$
is an automorphism $f:P\rightarrow P$ such that $\pi(p)=\pi(f(p))$ for all $p\in P$. In other words $f$ induces a well defined diffeomorphism
$\bar{f}:M\rightarrow M$ given by $\bar{f}(\pi(p))=\pi(f(p))$.
\end{defi}
Following section 3.3 of \cite{Bl05} we notice that the Lagrangian density on the principal fibre bundle $P$ on which we define the Yang-Mills connection is
a $G$-invariant functional on the space of $1$-jets of maps from $P$ to the fibre of the vector bundle $V$ associated with $P$ induced by the representation
$\rho:G\rightarrow\text{GL}(\mathbf{R}^{3K})$. Hence, the position variable $\mathbf{A}$ occurring in the Lagrangian density and its Legendre transform, the
Hamiltonian density takes value in $\mathbf{R}^{3K}$, which is the fibre of the vector bundle $V$. We want to analyze how the position variable behaves
if the principal fibre bundle is subject to a gauge transformation.
\begin{proposition}
Let $f$ be a gauge transformation of the principal fiber bundle $P$ and $\omega$ a connection. Then, $\omega^f:=(f^{-1})^*\omega$ is a connection on $P$.
They have the local representation on $\pi^{-1}(U)$
\begin{equation}
\begin{split}
\omega_p&=\text{ad}_{\zeta(p)^{-1}}\circ\pi^*A+\zeta^*\theta\\
\omega_p^f&=\text{ad}_{\zeta(p)^{-1}}\circ\pi^*A^f+\zeta^*\theta,
\end{split}
\end{equation}
and
\begin{equation}\label{A-trans}
A^f=\text{ad}_{\phi}\circ(A-\phi^*\theta),
\end{equation}
where:
\begin{itemize}
\item $\pi:P\rightarrow M$ is the projection of the principal fibre bundle $P$ onto its base space $M$,
\item $U\subset M$ is an open subset of the base space,
\item $\psi:\pi^{-1}(U)\rightarrow U\times G$ is a local trivialization of $\pi^{-1}(U)\subset P$, that is a $G$-equivariant diffeomorphism
such that the following diagram commutes
\begin{equation}
\xymatrix{
   \pi^{-1}(U)\ar[d]^{\pi} \ar[r]^{\psi}& U\times G \ar[ld]^{\text{pr}_1}\\
    U & \\
                 }
\end{equation}
This means that $\psi(p)=(\pi(p),\zeta(p))$, where $\zeta:\pi^{-1}(U)\rightarrow G$ is a fibrewise diffeomorphism satisfying
$\zeta(pg)=\zeta(p)g$ for all $g\in G$.
\item the trivialization map $\psi(f(p))=(\pi(p),\zeta(f(p)))$ let us define $\bar{\phi}:\pi^{-1}(U)\rightarrow G$ by
$\bar{\phi}(p):=\zeta(f(p))\zeta(p)^{-1}$, whence $\bar{\phi}(p)=\phi(\pi(p))$ for a well defined function $\phi$ in virtue of the equivariance of
$\psi$ and $f$.
\item The Maurer-Cartan form is the $\mathfrak{g}$-valued $1$-form defined by $\theta_g:=(L_{g^{-1}})_*:T_gG\rightarrow T_eG=\mathfrak{g}$.
\item $A$ and $A^f$ are $\mathfrak{g}$-valued $1$-forms on $M$ introduced in Remark \ref{rem-local}.
\end{itemize}
\end{proposition}
\begin{proof}
These are collected results from Proposition 3.3 and Proposition 3.22 in \cite{Bau14}.\\
\end{proof}
\begin{rem} For matrix groups equation (\ref{A-trans}) becomes
\begin{equation}
A^f=\phi A \phi^{-1}-d\phi \phi^{-1}
\end{equation}
\end{rem}
For the Yang-Mills construction we denote the $K\times 3$ matrices of the local representation of $A$ and its gauge transformation $A^f$ by $\mathbf{A}$
and $\mathbf{A}^f$, which is in line with the notation utilized so far for the position variable and introduced in Theorem \ref{CHam}.
To avoid confusion we drop the dependence on the coupling constant.

\begin{theorem}
Let $f$ be a gauge transform preserving the Coulomb gauge for the Yang-Mills construction and let $H^{\Lambda}$ and $H^{\Lambda;f}$ be the cutoff Hamilton operators for the quantized Yang-Mills
equation, before and after the gauge transform, as shown in Proposition \ref{quantization} and Theorem \ref{Selfadjointness}.
Let $U$ be the operator on $L^2(\mathcal{S}^{\prime}_{\bot}(\mathbf{R}^3,\mathbf{C}^{K \times 3}),d\nu^{\Lambda})$ induced by the
gauge transform as
\begin{equation}
U\Psi(\mathbf{A}):=\Psi(\mathbf{A}^f).
\end{equation}
Then, $U$ is a unitary operator in $L^2(\mathcal{S}^{\prime}_{\bot}(\mathbf{R}^3,\mathbf{C}^{K \times 3}),d\nu^{\Lambda})$ and
$H^{\Lambda}$ and $H^{\Lambda;f}$ are unitary equivalent:
\begin{equation}
H^{\Lambda;f}=UH^{\Lambda}U^{-1}.
\end{equation}
Moreover, the same holds true for the operator $H$ and $H^{f}$, where the cutoff is removed as shown in Corollary \ref{CorCutOffRemoval}. The operator
$U$ is unitary in $L^2(\mathcal{S}^{\prime}_{\bot}(\mathbf{R}^3,\mathbf{C}^{K \times 3}),d\nu)$ and
\begin{equation}
H^{f}=UHU^{-1}.
\end{equation}
\end{theorem}
\begin{proof}
First, we remark that $U$ maps $L^2(\mathcal{S}^{\prime}_{\bot}(\mathbf{R}^3,\mathbf{C}^{K \times 3}),d\nu)$ onto itself,
because it preserves the Coulomb gauge. Next, we prove that $U$ is unitary.
For all $\Psi,\Phi\in L^2(\mathcal{S}^{\prime}_{\bot}(\mathbf{R}^3,\mathbf{C}^{K \times 3}),d\nu^{\Lambda})$
\begin{equation}
\begin{split}
(U\Psi,U\Phi)&=\int_{\mathcal{S}^{\prime}_{\bot}(\mathbf{R}^3,\mathbf{R})}\Psi(\mathbf{A}^f)\Phi(\mathbf{A}^f)d\nu^{\Lambda}(\mathbf{A})=\\
&=\int_{\mathcal{S}^{\prime}_{\bot}(\mathbf{R}^3,\mathbf{R})}\Psi(\mathbf{A})\Phi(\mathbf{A})\underbrace{\left|\frac{\partial\mathbf{A}^f}{\partial\mathbf{A}}\right|^{-1}}_{=1}d\nu^{\Lambda}(\mathbf{A})=\\
&=(\Psi,\Phi)
\end{split}
\end{equation}
The change of variable is given by equation (\ref{A-trans}) which is affine in $\mathbf{A}$ because the adjoint representation is linear in $A$, which
means $(\text{ad}_g)_*=\text{ad}_g$ for all $g\in G$. Moreover, since
\begin{equation}
\text{ad}_g(A)=(L_g)_*A(R_{g^{-1}})_*,
\end{equation}
the Jacobi determinant reads
\begin{equation}
\det\left((\text{ad}_g)_*\right)=\det\left(\text{ad}_g\right)=\det\left((L_g)_*1_{\mathbf{R}^{3K}}(R_{g^{-1}})_*\right)=\det\left(1_{\mathbf{R}^{3K}}\right)=1.
\end{equation}
Note that the change of variable respects the fibre of the vector bundle $V$, and hence the change of variable formula for the integral is the one of
finite dimensional analysis.\par
Next, we prove the unitary equivalence of the Hamilton operators before and after the gauge transform. Their definitions read
\begin{equation}
\begin{split}
H^{\Lambda}&=H\left(\mathbf{A}(\varphi^{\Lambda}_t(\cdot-\cdot)),\frac{1}{\imath}\frac{\delta}{\delta \mathbf{A}(\varphi^{\Lambda}_t(\cdot-\cdot))}\right)\\
H^{\Lambda ;f}&=H\left(\mathbf{A}^f(\varphi^{\Lambda}_t(\cdot-\cdot)),\frac{1}{\imath}\frac{\delta}{\delta \mathbf{A}^f(\varphi^{\Lambda}_t(\cdot-\cdot))}\right)\\
\end{split}
\end{equation}
For appropriate  $\Psi,\Phi\in L^2(\mathcal{S}^{\prime}_{\bot}(\mathbf{R}^3,\mathbf{C}^{K \times 3}),d\nu^{\Lambda})$ we have
\begin{equation}
\begin{split}
&(H^{\Lambda ;f}\Psi,\Phi)=\\
&=\int_{\mathcal{S}^{\prime}_{\bot}(\mathbf{R}^3,\mathbf{R})}H\left(\mathbf{A}^f(\varphi^{\Lambda}_t(\cdot-\cdot)),\frac{1}{\imath}\frac{\delta}{\delta \mathbf{A}^f(\varphi^{\Lambda}_t(\cdot-\cdot))}\right)\Psi(\mathbf{A})\Phi(\mathbf{A})d\nu^{\Lambda}(\mathbf{A})=\\
&=\int_{\mathcal{S}^{\prime}_{\bot}(\mathbf{R}^3,\mathbf{R})}H\left(\mathbf{A}(\varphi^{\Lambda}_t(\cdot-\cdot)),\frac{1}{\imath}\frac{\delta}{\delta \mathbf{A}(\varphi^{\Lambda}_t(\cdot-\cdot))}\right)U^{-1}\Psi(\mathbf{A})U^{-1}\Phi(\mathbf{A})\underbrace{\left|\frac{\partial\mathbf{A}^f}{\partial\mathbf{A}}\right|^{-1}}_{=1}d\nu^{\Lambda}(\mathbf{A})=\\
&=(H^{\Lambda}U^{-1}\Psi,U^{-1}\Phi),
\end{split}
\end{equation}
leading to
\begin{equation}
H^{\Lambda;f}=UH^{\Lambda}U^{-1}
\end{equation}
on the corresponding domains.\\
The proof for the Hamilton operators where the cutoffs have been removed are formally the same.\\
\end{proof}
We can therefore conclude that the spectrum of the Hamilton operator for the quantized Yang-Mills problem is gauge invariant.
\subsection{Spectral Bounds}
We prove now that the Hamilton operator has a mass gap, being the sum of three non negative selfadjoint operators, one of which has a mass gap and having all
the same ground state, the vacuum.
\begin{proposition}\label{corspec}
The spectra of $H_I$, $H_{II}^g$ and $V^g$ are:
\begin{equation}\label{specH}
\begin{split}
&\spec(H_I)=[0,+\infty[\\
&\spec(H_{II}^g)=\{0\}\cup[\eta,+\infty[, \text{ for a }\eta>0\\
&\spec(V^g)=[0,+\infty[.
\end{split}
\end{equation}
Moreover $\eta=O(g^{2(n+1)})$ for any $n\in\mathbf{N}_0$, where $g$ is the bare coupling constant.
\end{proposition}
\noindent Now we can compute the lower bound of the spectrum of the Hamilton operator.
\begin{proof}[\text{Proof of Proposition \ref{corspec}}]
The proof mimics the proof of Proposition \ref{generalized_ev}.
We will construct generalized eigenvectors to show that the spectra have only a continuous part depicted as in (\ref{specH}). First, we analyze the operator $H_I$, which can be seen as
\begin{equation}
H_I=-\frac{1}{2}\int_{\mathbf{R}^3}d^3 x\Delta_{\mathbf{A}(t,x)}.
\end{equation} Let  $x\in\mathbf{R}^3$ and $t\in\mathbf{R}$ now be fixed. For any $R>0$ the Laplace operator $\Delta_{\mathbf{A}}$ on $[-\frac{R}{2},+\frac{R}{2}]^{3K}$ under Dirichlet boundary conditions has a discrete spectral resolution $(\lambda_k,\psi_k)_{k\ge0}$, where $\lambda_k=-\frac{\pi^2}{R^2}(k+1)$, and $\psi_k=\psi_k(\mathbf{A})\in C^{\infty}_0([-\frac{R}{2},+\frac{R}{2}]^{3K},\mathbf{C})$. We can extend $\psi_k$ outside the cube by setting its value to $0$, obtaining an approximated eigenvector for the approximated eigenvalue $\lambda_k$, which is in line with the fact that the Laplacian on $L^2(\mathbf{R}^{3K},\mathbf{C})$ has solely a continuous spectrum, which is $]-\infty,0]$. The functional
\begin{equation}
\Psi^{\mathbf{A};x_0}_k(\bar{\mathbf{A}}):=\delta(\bar{\mathbf{A}}-\mathbf{A})\delta(x-x_0)\psi_k(\mathbf{A})
\end{equation}
for $x_0\in\mathbf{R}^3, k\in\mathbf{N}$ and $\mathbf{A}\in\mathbf{R}^{3K}$ is a generalized eigenvector in
$\mathcal{E}^{\prime}(\mathcal{S}^{\prime}_{\bot}(\mathbf{R}^3,\mathbf{C}^{K \times 3}),d\nu^g)$ for the operator $H_I$ on the rigged Hilbert space $L^2(\mathcal{S}^{\prime}_{\bot}(\mathbf{R}^3,\mathbf{C}^{K \times 3}),d\nu^g)$
for the generalized eigenvalue $\frac{\pi^2}{R^2}(k+1)$, which, by Theorem \ref{GK}, is an element of the continuous spectrum of the non negative operator $H_I$. By varying the generalized eigenvalue over $k$ and $R$, the claim about the spectrum follows.\\
Next, we analyze the multiplication operator
\begin{equation}
V^g=\int_{\mathbf{R}^3} d^3x |R^{\nabla^\mathbf{A}}(t,x)|^2,
\end{equation}
where $R^{\nabla^\mathbf{A}}$ is the curvature operator associated to the connection $\mathbf{A}$.
Let $\mathbf{A}\in L^2_{\bot}(\mathbf{R}^3,\mathbf{C}^{K \times 3},d^3x)$ now be fixed.
Any non zero $\psi\in C^{\infty}_0(\mathbf{R}^{3K},\mathbf{C})$ is eigenvector of the multiplication with the non negative real
$|R^{\nabla^{\mathbf{A}}}(t,x)|^2$. The functional
\begin{equation}
\Psi^{\mathbf{A};x_0}(\bar{\mathbf{A}}):=\delta(\bar{\mathbf{A}}-\mathbf{A})\delta(x-x_0)\psi_k(\mathbf{A})
\end{equation}
where $(\psi_k)_{k\ge0}$ is an orthonormal basis of $L^2(\mathbf{R}^{3K},\mathbf{C})$ , is a generalized eigenvector
in $\mathcal{E}^{\prime}(\mathcal{S}^{\prime}_{\bot}(\mathbf{R}^3,\mathbf{C}^{K \times 3}),d\nu^g)$
for the operator $V^{g}$ on the rigged Hilbert space $L^2(\mathcal{S}^{\prime}_{\bot}(\mathbf{R}^3,\mathbf{C}^{K \times 3}),d\nu^g)$
for the generalized eigenvalue $V^{g}(t,x_0,\mathbf{A})$, which, by Theorem \ref{GK},
is an element of the continuous spectrum of the non negative operator $V^g$ and the claim about its spectrum follows.\\
Finally, we analyze the operator $H_{II}$, which we can write for any $\mathbf{A}\in L^2_{\bot}(\mathbf{R}^3,\mathbf{C}^{K \times 3},d^3x)$ as
\begin{equation}
H_{II}^g=-\frac{g^2}{2}\int_{\mathbf{R}^3}d^3x\left[\int_{\mathbf{R}^3}d^3y\,D_i^a(\mathbf{A};x,y)\right]^2,
\end{equation}
\noindent
for the operator $D=D(\mathbf{A;x,y})$ defined  as
\begin{equation}
D_i^a(\mathbf{A};x,y):=\partial_iG^{a,b}(\mathbf{A}(t,y);x,y)\varepsilon^{b,c,d}A_k^d(t,y)\frac{\delta}{\delta A_k^c(t,y)}.
\end{equation}
Let  $x_0,y_0\in\mathbf{R}^3$ now be fixed. We set
\begin{equation}
f_{i, k}^{a,c}(\mathbf{A};x_0,y_0):= \partial_iG^{a,b}(\mathbf{A}(t,y_0);x_0,y_0)\varepsilon^{b,c,d}A_k^d(t,y_0)
\end{equation}
and apply Lemma \ref{DirichletB} and Lemma \ref{Diffeo}. Assuming that for all indices $c,k$
\begin{equation}\label{bound}
\int_{-\infty}^{+\infty}dA_k^c\,f_{i, k}^{a,c}(\mathbf{A}(t,y_0);x_0,y_0)^{-1}<+\infty
\end{equation}
uniformly in $\mathbf{A}$, we can find a diffeomeorphism $\mathbf{B}:\mathbf{R}^{3K}\rightarrow \mathbf{R}^{3K}$ in the form of formula (\ref{diffeoB}), such that
for any $R>0$ the  operator $D_i^a(\mathbf{A}(t,y_0);x_0,y_0)^2$ on $B^{-1}([-\frac{R}{2},+\frac{R}{2}]^{3K})$ under Dirichlet boundary conditions
has a discrete spectral resolution $(\lambda_{i,s}^a(x_0,y_0),\psi_{i,s}^a(\mathbf{A}(t,y_0);x_0,y_0))_{s\ge0}$, where
\begin{equation}\label{lambdas}
\lambda_{i,s}^a(x_0,y_0)=-\sum_{j=1}^3\sum_{c=1}^K\frac{\pi^2k_{j,c,s}^2}{\left[\int_{-\frac{R}{2}}^{+\frac{R}{2}}dB_{j}^{c}\,g_{i, j}^{a, c}(B_{j}^{c};x_0,y_0)^{-1}\right]^2},
 \end{equation}
where $k_{j,c,s}\in\mathbf{Z}^*$ for all indices $s\in\mathbf{N}_0$, $j\in\{1,2,3\}$ and $c\in\{1,\dots, K\}$, and, by Lemma \ref{Diffeo} we defined
\begin{equation}
g_{i,j}^{a,l}(B_j^l;x_0,y_0):=\left(\sum_{k=1}^3\sum_{c=1}^KL_{i, k}^{a,c}(x_0,y_0)\right)e^{{B_j^l}^2}
\end{equation}
for
\begin{equation}\label{LiKi}
\begin{split}
&L_{i,j}^{a,c}(x_0,y_0)=\left[\sup_{\mathbf{A}}\left[\int_{-\infty}^{A_j^c}\,d\bar{A}_j^c\,f_{i, j}^{a,c}(\mathbf{A};x_0,y_0)^{-1}+K_{i,j}^{a,c}(\mathbf{A};x_0,y_0)\right]\right]^{-1}\\
&K_{i,j}^{a,c}(\mathbf{A}\,;x_0,y_0):=-\inf_{A_j^c}\int_{-\infty}^{A_j^c}d\bar{A}_j^c\,f_{i, j}^{a,c}(\mathbf{A};x_0,y_0)^{-1}.
\end{split}
\end{equation}
Note that for any $R>0$ the  operator $D_i^a(\mathbf{A}(t,y_0);x_0,y_0)$ on $B^{-1}([-\frac{R}{2},+\frac{R}{2}]^{3K})$
under Dirichlet boundary conditions has a discrete spectral resolution with the same eigenvectors but other eigenvalues $(\zeta_{i,s}^a(x_0,y_0),\psi_{i,s}^a(\mathbf{A}(t,y_0);x_0,y_0))_{s\ge0}$, where
\begin{equation}\label{lambdas}
\zeta_{i,s}^a(x_0,y_0)=-\sum_{j=1}^3\sum_{c=1}^K\frac{\imath\pi k_{j,c,s}^2}{\int_{-\frac{R}{2}}^{+\frac{R}{2}}dB_{j}^{c}\,g_{i, j}^{a, c}(B_{j}^{c};x_0,y_0)^{-1}},
 \end{equation}
where $k_{j,c,s}\in\mathbf{Z}^*$ for all indices $s\in\mathbf{N}_0$, $j\in\{1,2,3\}$ and $c\in\{1,\dots, K\}$.\\
Since $B^{-1}([-\frac{R}{2},+\frac{R}{2}]^{3K})\uparrow\mathbf{R}^{3K}$ for $R\uparrow+\infty$, we can extend $\psi_{i,s}^a(\cdot;x_0,y_0)$ outside the cube by setting its value to $0$, obtaining an approximated eigenvector for the approximated eigenvalue $\lambda_{i,s}^a(x_0,y_0)$ for the operator $D_i^a(\mathbf{A};x_0,y_0)^2$ on $L^2(\mathbf{R}^{3K}, \mathbf{C})$, which means that $\lambda_{i,s}^a(x_0,y_0)\in\spec_c(D_i^a(\mathbf{A};x_0,y_0)^2)$. For fixed $i,s$ and $a$ the functional
\begin{equation}\label{basis}
\Psi^{a,x_0,y_0, \mathbf{A}}_{i,k}(\bar{\mathbf{A}}):=\delta(\bar{\mathbf{A}}-\mathbf{A})\delta(x-x_0)\delta(y-y_0)\delta(\bar{y}-y_0)\psi_{i,k}^a(\mathbf{A};x_0,y_0)
\end{equation}
is a generalized eigenvector in $\mathcal{E}^{\prime}(\mathcal{S}^{\prime}_{\bot}(\mathbf{R}^3,\mathbf{C}^{K \times 3}),d\nu)$ for the operator
\begin{equation}
\begin{split}
H_{i,II}^{a,g}&:=-\frac{g^2}{2}\int_{\mathbf{R}^3}d^3x\left[\int_{\mathbf{R}^3}d^3y\,D_i^a(\mathbf{A};x,y)\right]^2=\\
&=-\frac{g^2}{2}\int_{\mathbf{R}^3}d^3x\left[\int_{\mathbf{R}^3}d^3y\,D_i^a(\mathbf{A};x,y)\right]\left[\int_{\mathbf{R}^3}d^3\bar{y}\,D_i^a(\mathbf{A};x,\bar{y})\right]
\end{split}
\end{equation}
on the rigged Hilbert space $L^2(\mathcal{S}^{\prime}_{\bot}(\mathbf{R}^4,\mathbf{C}^{K \times 3}),\mu)$ for the strictly positive generalized eigenvalue
\begin{equation}\label{genev}
\lambda_{i,s}^{a,g}(x_0,y_0)=g^2\sum_{j=1}^3\sum_{c=1}^K\frac{\frac{\pi^2}{2}k_{j,c,s}^2}{\left[\int_{-\frac{R}{2}}^{+\frac{R}{2}}dB_{j}^{c}\,g_{i, j}^{a, c}(B_{j}^{c};x_0,y_0)^{-1}\right]^2},
\end{equation}which, by Theorem \ref{GK}, is an element of the continuous spectrum of the operator $H_{i,II}^a$.
We still have to prove to check that $\lambda_{i,s}^a(x_0,y_0)$ is bounded away from $0$ uniformly in $R$. By Proposition \ref{prop} and Corollary \ref{cor}, for every $n\in\mathbf{N}_0$ there is a constant $c_n>0$, bounded in $n$ such that
\begin{equation}\label{boundint}
\int_{-\frac{R}{2}}^{+\frac{R}{2}}dA_j^c(t,y_0)[\partial_iG^{a,b}(\mathbf{A};x_0,y_0)\varepsilon^{b,c,d}A_j^d(t,y_0)]^{-1}\le c_ng^{2n}\int_{-\infty}^{+\infty}dA_j^c(t,y_0)\,\frac{1}{1+|A_j^d(t,y_0)|^{2n+1}}.
\end{equation}
Therefore, inserting (\ref{boundint}) into (\ref{LiKi}) and  (\ref{lambdas}) leads to the spectral lower bound
\begin{equation}
\lambda_{i,s}^{a,g}(x_0,y_0)\ge C_n\, g^{2(n+1)}
\end{equation}
for all $i\in{1,2,3}$, $s\in\mathbf{N}_0$ and for all $n\in\mathbf{N}_0$, for an appropriate constant $C_n$ bounded in $n$. Since the collection of generalized eigenvectors obtained by varying (\ref{basis}) over $k, x_0$, and $y_0$ is complete in the sense of Theorem \ref{GK} (ii), by varying the generalized eigenvalue (\ref{genev}) over $k_{j,c, s}$ and $R$, the claim about the spectrum follows for $\eta=O(g^{2(n+1)})$. The proof is complete.\\
\end{proof}
\begin{lemma}\label{Lsgap} let $A$ and $B$ be two self adjoint operators on the Hilbert space $\mathcal{H}$, such that $\mathcal{D}(A+B)$ is dense in $\mathcal{H}$,
 $\spec(A)\subset[0,+\infty[$, $\spec(B)\subset\{0\}\cup[\eta,+\infty[$ for a $\eta>0$, and $0$ is an eigenvalue of finite multiplicity
 for the eigenvector $\Omega_0$ for both $A$ and $B$. Then, $\spec(A+B)\subset\{0\}\cup[\eta,+\infty[$, i.e. the spectral gap of $B$ is maintained.
\end{lemma}
\begin{proof}
The spectral gap of $A+B$ reads
\begin{equation}
\begin{split}
\eta(A+B)&=\inf_{\psi\in\langle \Omega_0\rangle^{\bot}\cap\mathcal{D}(A+B)}\frac{(\psi,(A+B)\psi)}{(\psi,\psi)}
\ge\inf_{\psi\in\langle \Omega_0\rangle^{\bot}\cap\mathcal{D}(A+B)}\frac{(\psi, B\psi)}{(\psi,\psi)}=\\
&=\inf_{\psi\in\langle \Omega_0\rangle^{\bot}\cap\mathcal{D}(B)}\frac{(\psi, B\psi)}{(\psi,\psi)}=\eta(B)=\eta>0.
\end{split}
\end{equation}
\end{proof}
\begin{counterex}
Let $H_1$ be a non negative selfadjoint operator on the Hilbert space $\mathcal{H}_1$, and $H_2$ a non negative selfadjoint operator on the Hilbert space $\mathcal{H}_2$.
Both operators have $0$ a simple eigenvalue for the eigenvectors $\Omega_1\in\mathcal{H}_1$ and $\Omega_2\in\mathcal{H}_2$.
Moreover let $H_2$ have a spectral gap $\spec(H_2)\subset\{0\}\cup[\eta,+\infty[$ for a $\eta>0$. Let us define
\begin{equation}
\begin{split}
\mathcal{H}&:=\mathcal{H}_1\otimes\mathcal{H}_2\\
H&:=H_1\otimes\mathbb{1}+\mathbb{1}\otimes H_2.
\end{split}
\end{equation}
The operator $H$ is selfadjoint and non negative on the Hilbert space $\mathcal{H}$, and has $0$ has eigenvalue. But it has no spectral gap.
The reason is that $0$, as an eigenvalue of $ H_1\otimes\mathbb{1}$ and $\mathbb{1}\otimes H_2$ has infinite multiplicity, leading to a clustering
of elements of $\spec(H)$ near $0$. Lemma \ref{Lsgap} cannot be applied.
\end{counterex}
\begin{corollary}\label{corspec2}
The spectrum of the Hamiltonian  $H^g$ contains $0$ as a simple eigenvalue for the vacuum eigenstate, and satisfies
\begin{equation}
\spec(H^g)\subset\{0\}\cup[\eta,+\infty[, \text{ for a }\eta>0,
\end{equation}
and $\eta=O(g^{2(n+1)})$ for any $n\in\mathbf{N}_0$, where $g$ is the bare coupling constant.
\end{corollary}
\begin{proof}
By Proposition \ref{corspec} the operators $H_I$, $H_{II}^g$ and $V^g$ are positive semidefinite and so is $H^g$.
By Proposition \ref{CorCutOffRemoval} the ground state $\Omega_0$ is the eigenvector of finite multiplicity for the eigenvalue $0$ for all these four positive semidefinite operators.
By Lemma \ref{Lsgap} the spectral gap of $H^g$ is bounded from below by the spectral gap of $H_{II}^g$:
\begin{equation}
\eta(H^g)\ge\eta(H_{II}^g)=:\eta,
\end{equation}
and $\eta=O(g^{2(n+1)})$ for any $n\in\mathbf{N}_0$ holds true by Proposition \ref{CorCutOffRemoval}.
\end{proof}

\subsection{Running of the Coupling Constant by Renormalization and Asymptotic Freedom}\label{AF}
Till now all of our considerations referred to the \textit{bare coupling constant}, which we now denote by $g_0$.
We can repeat the classical and quantum mechanical construction of Section \ref{YMC} and Section \ref{QYM} for the running coupling constant $g=g(\mu)$,
where $\mu$ is the energy scale, instead of the bare coupling constant $g_0$.
To treat the non-trivial behaviour of $g(\mu)$  we have to renormalize running fields $A$ and constants $g$ by an appropriate scaling
of the bare quantities $A_0$ and $g_0$. Following \cite{Ti08} Chapter 21.9  we introduce renormalization constants $Z_3$ and $Z_g$ and the transform
\begin{equation}\label{renorm}
{A_0}_j^a=\sqrt{Z_3}A_j^a\qquad g_0= Z_g\mu^{\epsilon}g,
\end{equation}
and choose the values of the constants as
\begin{equation}
Z_3:=1-C_2(G)\frac{1}{\epsilon}\frac{\alpha_s}{4\pi}\qquad
Z_g:=1-\frac{1}{\epsilon}\frac{b_0}{2}\alpha_s,
\end{equation}
\noindent where $\alpha_s:=\frac{g^2}{4\pi}$, $b_0:=\frac{11}{12\pi}C_2(G)$,
and $C_2(G)$ is the quadratic casimir operator in the adjoint representation of the Lie algebra of the group $G$.
 The parameter $b_0$ is positive. The parameter $\epsilon$ will be let to converge to $0$ at the end of the calculation.\\
Since the bare coupling constant knows nothing about the energy scale $\mu$,
\begin{equation}
\frac{dg_0}{d\mu}=0,
\end{equation}
which, by mean of (\ref{renorm}) leads to the Gellman-Low equation
\begin{equation}
\beta(\alpha_s)=\frac{-\epsilon\alpha_s}{1-\frac{b_0}{\epsilon}\alpha_s}=-b_0\alpha_s^2+O(\epsilon,\alpha_s)\qquad(\epsilon,\alpha_s\rightarrow0),
\end{equation}
\noindent where we have defined $\beta(g):=\frac{d\alpha_s}{dt}$  for $t:=\log(\mu^2)$. With the choice $\epsilon:=0$
we can easily solve the Gellman-Low equation and obtain the implicit
\begin{equation}\label{g}
\frac{1}{\alpha_s(\mu)}-\frac{1}{\alpha_s(M)}=b_0\log\frac{\mu^2}{M^2},
\end{equation}
\noindent where $M$ is an integration constant, which we choose such that $\lim_{\mu\rightarrow M^+}\alpha_s(\mu)=+\infty$, so that

\begin{equation}\label{limit_g}
\alpha_s(\mu) = \frac{1}{b_0\log\frac{\mu^2}{M^2}}\qquad(\mu\in]M,+\infty[),
\end{equation}
\noindent which is in line with \cite{We05} page 156. Therefore,
\begin{equation}
\lim_{\mu\rightarrow+\infty}g(\mu)=0.
\end{equation}
This phenomenon, termed \textbf{asymptotic freedom} was discovered by Gross and Wilczek (\cite{GW73}, and independently by Politzer (\cite{Po73}).\\
The running mass gap is
\begin{equation}
\eta(\mu)=O(g^{2n+1}(\mu)),
\end{equation}
\noindent for any $n\in\mathbf{N}_0$, where
\begin{equation}
g(\mu)=\sqrt{\frac{4\pi}{b_0\log\frac{\mu^2}{M^2}}}.
\end{equation}
\noindent With  Corollaries \ref{cYMM}, \ref{CorCutOffRemoval} and \ref{corspec2} we have therefore proved
\begin{theorem}\label{CMIThm}
In the case of a running coupling constant $g=g(\mu)$ the construction of the $4$D-YM-measure satisfies Wightman axioms (W1)-(W8)
and the spectrum of the running Hamilton operator $H$ contains $0$ as simple eigenvalue for the vacuum eigenstate.
There exists a constant $\eta=\eta(\mu)>0$ such that $\text{spec}(H^{g(\mu)})\subset\{0\}\cup[\eta(\mu),+\infty[$.
Moreover $\eta=O(g^{2(n+1)}(\mu))$ for any $n\in\mathbf{N}_0$.
The mass gap tends to $0$ if the energy scale becomes arbitrary large.
\end{theorem}
\begin{rem}
As an application we see that, once the infrared cutoff in the Magnen-Rivasseau-S\'{e}n\'{e}or model is removed,  the mass gap is zero,
because the limit of the running coupling constant vanishes, and the Hamiltonian has the same spectral properties as in the case of electrodynamics.
\end{rem}
\section{Conclusion}
We have quantized Yang-Mills equations for the positive light cone in the Minkowskian $\mathbf{R}^{1,3}$ obtaining field maps
satisfying Wightman axioms of Constructive Quantum Field Theory. Moreover, the spectrum of the corresponding Hamilton operator is positive and bounded away from zero except for the case of the vacuum state which has vanishing energy level.
The construction is invariant under gauge transforms preserving the Coulomb gauge.
\appendix
\section{Spectral Theory in Rigged Hilbert Spaces}\label{AppA}
Rigged Hilbert spaces have been introduced in mathematical physics to utilize Dirac calculus for the spectral theory of operators appearing
in quantum mechanics (see \cite{Ro66}, \cite{ScTw98} and \cite{Ma01}, \cite{Ma08}). Within that framework the role of distributions to provide a rigorous
foundation to generalized eigenvectors and eigenvalues is highlighted by the Gel'fand-Kostyuchenko-Vilenkin spectral theorems for unitary and selfadjoint operators
(see \cite{GV64}, \cite{Sp19}, \cite{An07} and \cite{Ze09} Chapter 12.2.4).

\begin{defi}
Let $\mathcal{F}$ be a vector space on which are defined two inner products. We say the
inner products are \textbf{compatible} if every sequence in $\mathcal{F}$ which is Cauchy with respect to both inner
products and converges to $\varphi\in\mathcal{F}$ with respect to one inner product also converges to $\varphi$ with respect
to the other inner product.
\end{defi}

\begin{defi} A Frech\'{e}t space  $\mathcal{F}$ is a \textbf{countably Hilbert space} if its topology can be induced
by a countable system of pairwise compatible inner products $((\cdot,\cdot)_k)_{k\ge0}$. Without loss of generality we can assume that
\begin{equation}
(\varphi,\varphi)_0\le(\varphi,\varphi)_1\le(\varphi,\varphi)_2\le\dots
\end{equation}
for all $\varphi\in\mathcal{F}$. We denote by $\mathcal{F}_k$ the completion of $\mathcal{F}$ with respect to $(\cdot,\dot)_k$.
\end{defi}

\begin{proposition}
There is a decreasing chain
\begin{equation}
\mathcal{F}\subset\dots\subset\mathcal{F}_k\subset\mathcal{F}_{k-1}\dots\subset\mathcal{F}_1\subset\mathcal{F}_0,
\end{equation}
where every inclusion is a linear injective continuous operator of norm $1$, and an increasing sequence of dual spaces
\begin{equation}
\mathcal{F}_0^{\prime}\subset\mathcal{F}_1^{\prime}\subset\dots\subset\mathcal{F}_{k-1}^{\prime}\subset\mathcal{F}_k^{\prime}\subset\dots\subset\mathcal{F}^{\prime},
\end{equation}
where every inclusion is a linear injective continuous operator of norm $1$.\\
For $k\ge l$ we denote the embedding $\mathcal{F}_k\subset\mathcal{F}_{l}$ by $T^k_l$.
\end{proposition}
\begin{rem}
Since by Riesz's Lemma every Hilbert space is isomorphic to its dual, it follows that for any $k,l\ge0$ $\mathcal{F}_k$
is embedded in $\mathcal{F}_l$ and viceversa, but the two Hilbert spaces are not necessarily isomorphic.
\end{rem}

\begin{defi}
A countably Hilbert spaceis \textbf{nuclear} if for every $l\in\mathbf{N}_0$, there exists a $k\ge l$
such that the mapping $T^k_l:\mathcal{F}_k\rightarrow\mathcal{F}_l$ has the form
\begin{equation}
T^k_l(\varphi)=\sum_{j=0}^{\infty}\lambda_j(\varphi,\varphi_j)\psi_j=\sum_{j=0}^{\infty}\lambda_jF_j(\varphi)\psi_j,
\end{equation}
where $(\varphi_j)_{j\ge0}\subset\mathcal{F}_k$ , $(\psi_j)_{j\ge0}\subset\mathcal{F}_l$ , and $(F j)_{j\ge0}\subset\mathcal{F}_k^{\prime}$
are orthonormal bases and the $\lambda_j$s are positive
numbers such that $\sum_{j=0}^{\infty}\lambda_j<+\infty$.
\end{defi}
\begin{defi}
A \textbf{rigged Hilbert space} $\mathcal{H}$ is a nuclear countably Hilbert space equipped with
yet another inner product $(\cdot,\cdot)$ which is continuous in both variables.
\end{defi}
\begin{defi}
Let $A$ be a linear operator on a locally convex topological vector space $\mathcal{F}$. A linear
functional $F\in\mathcal{F}^{\prime}$ is a \textbf{generalized eigenvector} of $A$ if there exists a scalar $\lambda$ such that
$F(A\varphi) = \lambda F(\varphi)$
for all $\varphi\in\mathcal{F}$. We call $\lambda$ the eigenvalue of the eigenvector $F$. In other words, a generalized
eigenvector of $A$ is an eigenvector of the adjoint $A^{\prime}:\mathcal{F}^{\prime}\rightarrow \mathcal{F}^{\prime}$.
We say the set of all generalized eigenvectors $(F_{\iota})_{\iota\in I}$ of $A$ is \textbf{complete} if $F_{\iota}(\varphi)=0$ for all
$\iota\in I$ implies $\varphi=0$.
\end{defi}

\begin{theorem}[\textbf{Gel'fand-Kostyuchenko-Vilenkin}]\label{GK}
A selfadjoint operator $A$ on a rigged Hilbert space $\mathcal{F}\subset\mathcal{H}\subset\mathcal{F}^{\prime}$, where $\mathcal{D}(A)\subset\mathcal{F}$, has a complete
system of orthonormal generalized eigenvectors with real eigenvalues. The spectrum of $A$ then reads
$\spec(A)=(\lambda_{\iota})_{\iota\in I}$, where for all $\iota\in I$ the real number $\lambda\iota$ is either an eigenvalue,
i.e. $F_\iota\in \mathcal{H}$, or an element of the continuous spectrum, i.e. $F_\iota\notin \mathcal{H}$. For every $\varphi,\psi\in\mathcal{F}$ we have
\begin{equation}\label{scalpH}
(\varphi,\psi)=\int_{\mathbf{R}}d\xi(\iota)F_{\iota}(\varphi)\overline{F_{\iota}(\psi)},
\end{equation}
for a measure $\xi$ on $\mathbf{R}$.
\end{theorem}

\begin{corollary}\label{invGK}
If an operator $A$ on a rigged Hilbert space $\mathcal{F}\subset\mathcal{H}\subset\mathcal{F}^{\prime}$, where $\mathcal{D}(A)\subset\mathcal{F}$, has a complete
system of generalized eigenvectors with real eigenvalues, then it is selfadjoint.
\end{corollary}
\begin{proof}
Choose the measure $\xi$ such that the r.h.s. of (\ref{scalpH}) is equal to the scalar product in $\mathcal{H}$.
Then, $A$ is selfadjoint with respect to this scalar product.
\end{proof}

\begin{ex}[\textbf{Schwartz's Space}]
The inclusion
\begin{equation}
\mathcal{S}(\mathbf{R}^N)\subset L^2(\mathbf{R}^N,d^Nx)\subset\mathcal{S}^{\prime}(\mathbf{R}^N)
\end{equation}
defines a rigged Hilbert space, as can be seen with the definition of the compatible scalar products
\begin{equation}\label{normH}
(\varphi,\psi)_k:=\sum_{|\alpha|\le k}\int_{\mathbf{R}^N}d^Nx(1+|x|^2)^{\frac{k}{2}}\partial^{\alpha}\varphi(x)\overline{\partial^{\alpha}\psi(x)}.
\end{equation}
for all $k\in\mathbf{N}_0$
\end{ex}

\begin{ex}[\textbf{Kubo-Takenaka Construction}]\label{ExKT}
we follow \cite{Ku96} Chapter 4.2. Let $\nu_0$ be the standard Gaussian probability measure on $\mathcal{S}^{\prime}(\mathbf{R}^N)$. By Proposition \ref{WIS} every
$\varphi\in L^2(\mathcal{S}^{\prime}(\mathbf{R}^N),d\nu_0)$ can be written as
\begin{equation}
\varphi=\sum_{j=0}^{\infty}\theta_W(f_j),
\end{equation}
where $f_j\in \mathcal{H}^j_s:=S_j(L^2(\mathbf{R}^N,d^Nx)^{\otimes j})$ is an element of the Bosonic Fock space and $\theta_W$
the Wiener-It\^{o}-Segal isomorphism.
For any $k\in\mathbf{N}_0$ we can define a norm and an associated scalar product as
\begin{equation}
\|\varphi\|_k^2:=\sum_{j=0}^{\infty}j!\|f_j\|_k^2,
\end{equation}
where $\|\cdot\|_k$ is the norm $\mathcal{H}^j_s$ induced by the norm $\|\cdot\|_k$ in $L^2(\mathcal{S}^{\prime}(\mathbf{R}^N),d\nu_0)$ defined as (\ref{normH}.)
\begin{proposition}\label{KT}
Let
\begin{equation}
\begin{split}
&\mathcal{E}(L^2(\mathcal{S}^{\prime}(\mathbf{R}^N),d\nu_0)):=\left\{\varphi\in L^2(\mathcal{S}^{\prime}(\mathbf{R}^N),d\nu_0)|\,\|\varphi\|_k<+\infty\right\}\\
&\mathcal{E}^{\prime}(L^2(\mathcal{S}^{\prime}(\mathbf{R}^N),d\nu_0)): \text{ dual space of }(\mathcal{F}).
\end{split}
\end{equation}
Then, we have a rigged Hilbert space
\begin{equation}
\mathcal{E}(L^2(\mathcal{S}^{\prime}(\mathbf{R}^N),d\nu_0))\subset L^2(\mathcal{S}^{\prime}(\mathbf{R}^N),d\nu_0)\subset \mathcal{E}^{\prime}(L^2(\mathcal{S}^{\prime}(\mathbf{R}^N),d\nu_0)).
\end{equation}
\end{proposition}
\end{ex}

\section*{Acknowledgement}
I would like to express my gratitude to Martin Hairer, J\"urg Fr\"ohlich, Massimiliano Gubinelli, Jean-Pierre Magnot, John LaChapelle, Edward Witten, Gian-Michele Graf and to Lee Smolin for the challenging discussion leading to various important corrections of previous versions of this paper. The possibly remaining mistakes are all mine.

\end{document}